\DeclarePairedDelimiter\floor{\lfloor}{\rfloor}
\newtheorem{theorem}{Theorem}[section]
\newtheorem{proposition}[theorem]{Proposition}
\newtheorem{corollary}[theorem]{Corollary}
\newtheorem{lemma}[theorem]{Lemma}
\newtheorem{remark}[theorem]{Remark}
\pgfplotsset{compat=1.16}
\appto{\bibsetup}{\sloppy}
\newcommand\N{\mathbb{N}}
\newcommand\R{\mathbb{R}}
\newcommand\G{\mathcal{G}}
\newcommand\E{\mathds{E}}
\newcommand\p{\mathds{P}}
\newcommand\GF{\mathfrak{F}}
\newcommand\GI{\mathfrak{I}}
\newcommand\U{\mathrm{U}}
\newcommand\1{\mathds{1}}
\newcommand\oh{\mathrm{o}}
\newcommand\eqd{\overset{d}{=}}
\newcommand\cid{\xrightarrow{d}}
\newcommand\cip{\xrightarrow{\p}}
\newcommand\ciL{\xrightarrow{L^1}}
\newcommand\Var{\mbox{Var}}
\newcommand\ov[1]{\overline{#1}}
\newcommand\un[1]{\underline{#1}}
\newcommand\co{\mathsf{c}}
\newcommand\nf[1]{\normalfont{#1}}
\newcommand\wt{\widetilde}
\title{Asymptotic shape of the concave majorant of a L\'evy process}
\date{\today}
\author{David Bang, Jorge Gonz\'alez C\'azares \& Aleksandar Mijatovi\'c}
\address{Department of Statistics, University of Warwick, \and The Alan Turing Institute, UK}
\email{david.bang@warwick.ac.uk}
\email{jorge.gonzalez-cazares@warwick.ac.uk}
\email{a.mijatovic@warwick.ac.uk}
\begin{document}
\begin{abstract}
We establish distributional limit theorems for the shape statistics of a concave majorant (i.e. the fluctuations of its length,
its supremum, the time it is attained and its value at $T$) of any L\'evy process on $[0,T]$ as $T\to\infty$. The scale of the fluctuations of the length and other statistics, as well as their asymptotic dependence, vary significantly with the tail behaviour of the L\'evy measure. 
The key tool in the proofs is the recent
representation of the concave majorant for all L\'evy processes~\cite{CM_Fluctuation_Levy} using a stick-breaking representation.
\end{abstract}

\subjclass[2020]{60F05; 60G51}

\keywords{concave majorant, convex minorant, limit theorem, stick-breaking process, L\'evy process}

\maketitle

\section{Introduction and main results}
\label{sec:intro}
Convex hulls of random walks and and related processes have been of interest for many decades (see e.g.~\cite{MR2898714,MR2978134,MR3564216,MR3896868,randonfurling2020convex,RWpaper} and references therein). The main objective of the present paper is to understand the asymptotic shape of the concave majorant of a L\'evy process as the time horizon tends to infinity (see Figure~\ref{fig:ConvexHull}). 

\begin{figure}[ht]
\begin{tikzpicture}
\begin{axis} 
	[
	ymin=-1.5,
	ymax=1.8,
	xmin=-60,
	xmax=490,
	xlabel={$T$},
	width=12cm,
	height=6.5cm,
	axis on top=true,
	xticklabels=none,
	yticklabels=none,
	axis x line=none, 
	axis y line=none,
	xlabel style={at={(1,.025)},anchor=south east}
	]			
	
	\node[circle, , fill=none, scale=0.1, label=below:{{\color{gray}$t\mapsto X_t$}}] at (170,.25) {}{};
	
	\node[circle, , fill=none, scale=0.1, label=below:{{\color{blue}$t\mapsto C^\frown_T(t)$}}] at (75,1.70) {}{};
	
	\node[circle, , fill=none, scale=0.1, label=below:{{\color{red}$t\mapsto C^\smile_T(t)$}}] at (90,-.45) {}{};
	
	\node[circle, , fill=blue, scale=0.3, label=above right:{{\color{blue}$(\gamma^\frown_T,\ov{C}^\frown_T)$}}] at (193,1.37) {}{};
	
	\node[circle, , fill=red, scale=0.3, label=below right:{{\color{red}$(\gamma^\smile_T,\un{C}^\smile_T)$}}] at (262,-0.889) {}{};
	
	\node[circle, , fill=gray, scale=0.3, label= right:{{\color{gray}$(T,X_T)$}}] at (400,0.143) {}{};
	
	\node[circle, , fill=gray, scale=0.3, label= left:{{\color{gray}$(0,0)$}}] at (0,0) {}{};
	
	\addplot[
		color = gray,
		]
	coordinates {
(1,0.0)(2,-0.0159)(3,0.0898)(4,0.0815)(5,0.285)(6,0.2)(7,0.123)(8,-0.0293)(9,-0.00265)(10,0.0606)(11,0.0916)(12,0.28)(13,0.422)(14,0.276)(15,0.417)(16,0.418)(17,0.238)(18,0.259)(19,0.217)(20,0.132)(21,0.151)(22,0.189)(23,0.45)(24,0.277)(25,0.174)(26,0.269)(27,0.283)(28,0.449)(29,0.286)(30,0.267)(31,0.121)(32,0.188)(33,0.154)(34,0.314)(35,0.312)(36,0.435)(37,0.341)(38,0.533)(39,0.478)(40,0.54)(41,0.611)(42,0.652)(43,0.833)(44,0.897)(45,0.814)(46,0.782)(47,0.902)(48,0.88)(49,0.857)(50,0.829)(51,0.89)(52,0.929)(53,0.739)(54,0.766)(55,0.748)(56,0.637)(57,0.745)(58,0.756)(59,0.568)(60,0.399)(61,0.346)(62,0.15)(63,0.186)(64,0.146)(65,0.0877)(66,0.131)(67,0.157)(68,0.164)(69,0.171)(70,0.151)(71,0.22)(72,0.262)(73,0.166)(74,0.271)(75,0.342)(76,0.469)(77,0.613)
};
    \addplot[
		color = gray,
		]
	coordinates {
(78,0.909)(79,1.11)(80,1.06)(81,1.05)(82,1.08)(83,0.958)(84,1.01)(85,0.932)(86,1.05)(87,1.16)(88,1.27)(89,1.23)(90,1.12)(91,1.05)(92,1.0)(93,0.968)(94,0.878)(95,1.07)(96,1.16)(97,1.23)(98,1.32)(99,1.26)(100,1.17)(101,1.17)(102,1.07)(103,1.12)(104,0.965)(105,1.06)(106,1.19)(107,1.11)(108,0.997)(109,1.21)(110,1.06)(111,0.947)(112,0.792)(113,0.852)(114,0.678)(115,0.592)(116,0.54)(117,0.485)(118,0.591)(119,0.481)(120,0.529)(121,0.618)(122,0.59)(123,0.48)(124,0.462)(125,0.697)(126,0.664)(127,0.656)(128,0.556)(129,0.796)(130,0.766)(131,0.698)(132,0.796)(133,0.79)(134,0.767)(135,0.849)(136,0.907)(137,0.854)(138,0.803)(139,0.742)(140,0.669)(141,0.635)(142,0.737)(143,0.846)(144,0.804)(145,0.77)(146,0.71)(147,0.603)(148,0.546)(149,0.629)(150,0.729)(151,0.723)(152,0.602)(153,0.825)(154,0.779)(155,0.74)(156,0.78)(157,0.694)(158,0.612)(159,0.632)(160,0.578)(161,0.647)(162,0.717)(163,0.649)(164,0.599)(165,0.561)(166,0.794)(167,0.932)(168,0.944)(169,0.9)(170,0.929)(171,0.997)(172,1.03)(173,1.24)(174,1.37)};
    \addplot[
		color = gray,
		]
	coordinates {
(175,1.29)(176,1.12)(177,1.21)(178,1.09)(179,1.05)(180,0.887)(181,1.05)(182,0.992)(183,1.02)(184,1.05)(185,1.11)(186,1.14)(187,1.04)(188,1.09)(189,1.24)(190,1.32)(191,1.27)(192,1.19)(193,1.37)
};
    \addplot[
		color = gray,
		]
	coordinates {
(194,1.06)(195,0.917)(196,0.942)(197,0.858)(198,0.956)(199,0.95)(200,0.977)(201,0.856)(202,0.703)(203,0.57)(204,0.656)(205,0.736)(206,0.587)(207,0.704)(208,0.531)(209,0.473)
};
    \addplot[
		color = gray,
		]
	coordinates {
(210,0.245)(211,0.238)(212,0.116)(213,0.0396)(214,0.109)(215,0.208)(216,0.294)(217,0.128)(218,-0.106)(219,-0.264)(220,-0.3)(221,-0.324)(222,-0.262)(223,-0.26)(224,-0.342)(225,-0.313)(226,-0.409)(227,-0.483)(228,-0.397)(229,-0.44)(230,-0.522)(231,-0.509)(232,-0.517)(233,-0.547)
};
    \addplot[
		color = gray,
		]
	coordinates {
(234,-0.258)(235,-0.308)(236,-0.305)(237,-0.329)(238,-0.359)(239,-0.408)(240,-0.268)(241,-0.423)(242,-0.491)(243,-0.406)(244,-0.346)(245,-0.473)(246,-0.549)(247,-0.576)(248,-0.535)(249,-0.481)(250,-0.446)(251,-0.577)(252,-0.552)(253,-0.492)(254,-0.568)(255,-0.509)(256,-0.655)(257,-0.779)(258,-0.811)(259,-0.822)(260,-0.764)(261,-0.71)(262,-0.889)(263,-0.851)(264,-0.763)(265,-0.796)(266,-0.784)(267,-0.571)(268,-0.661)(269,-0.652)(270,-0.518)(271,-0.435)(272,-0.433)(273,-0.369)(274,-0.165)(275,-0.123)(276,-0.0464)(277,-0.1)(278,0.00201)(279,0.0476)(280,-0.0175)(281,-0.0458)(282,0.0881)(283,0.232)(284,0.0109)(285,-0.0222)(286,-0.0835)(287,-0.0373)(288,-0.0549)(289,-0.0873)(290,-0.159)(291,-0.0652)(292,-0.0394)(293,-0.183)(294,-0.0184)(295,-0.0474)(296,-0.0932)(297,-0.0382)(298,0.0574)(299,-0.0186)(300,0.0919)(301,-0.0302)(302,-0.054)(303,-0.0175)(304,-0.014)(305,0.019)(306,0.207)(307,0.173)(308,0.241)(309,0.18)(310,0.236)(311,0.235)(312,0.321)(313,0.345)(314,0.379)(315,0.391)(316,0.278)(317,0.318)(318,0.432)(319,0.442)(320,0.473)(321,0.376)(322,0.332)(323,0.238)(324,0.284)(325,0.113)(326,0.121)(327,0.185)(328,0.22)(329,0.0886)(330,0.156)(331,0.182)(332,0.0961)(333,0.0704)(334,-0.0415)(335,-0.0266)(336,0.0334)(337,0.131)(338,0.185)(339,0.261)(340,0.292)(341,0.326)(342,0.138)(343,0.0405)(344,0.105)(345,0.0593)(346,0.12)(347,0.322)(348,0.174)(349,0.264)(350,0.208)(351,0.157)(352,-0.0806)(353,-0.0832)(354,-0.053)(355,-0.0729)(356,-0.238)(357,-0.22)(358,-0.0487)(359,-0.182)(360,-0.182)(361,-0.119)(362,-0.156)(363,-0.127)(364,-0.107)(365,-0.148)(366,-0.308)(367,-0.27)(368,-0.166)(369,-0.0305)(370,-0.0877)(371,-0.0824)(372,-0.185)(373,-0.253)(374,-0.326)(375,-0.391)(376,-0.373)(377,-0.275)(378,-0.318)(379,-0.331)(380,-0.37)(381,-0.336)(382,-0.221)(383,-0.25)(384,-0.247)(385,-0.253)(386,-0.132)(387,-0.126)(388,-0.204)(389,-0.182)(390,0.047)(391,0.0242)(392,-0.0796)
};

    \addplot[
		color = gray,
		]
	coordinates {
(393,0.312)(394,0.229)(395,0.197)(396,0.0164)(397,0.0766)(398,0.0459)(399,0.0201)(400,0.143)
};

    \addplot[
		color = blue,
		]
	coordinates {
(1,0.0)(5,0.285)(13,0.422)(44,0.897)(88,1.27)(98,1.32)(174,1.37)(193,1.37)(393,0.312)(400,0.143)
};

    \addplot[
		color = red,
		]
	coordinates {
(1,0.0)(2,-0.0159)(262,-0.889)(380,-0.37)(399,0.0201)(400,0.143)
};

\end{axis}
\end{tikzpicture}
\caption{\label{fig:ConvexHull} 
A sample path of a L\'evy process $X$ on the interval $[0,T]$, 
the graphs of the concave majorant $C_T^\frown$ and the convex minorant $C_T^\smile$ and the time and space position of their respective supremum 
$(\gamma^\frown_T,\ov{C}^\frown_T)$ and infimum $(\gamma^\smile_T,\un{C}^\smile_T)$.}{} 
\end{figure}

Let $X=(X_t)_{t\ge0}$ be a one-dimensional L\'evy process (see \cite[Def.~1.6, Ch.~1]{SatoBookLevy}) and fix  a time interval $[0,T]$ for some positive time horizon $T>0$. The concave majorant (resp. convex minorant) of a path of a L\'evy process $(X_t)_{t \geq 0}$ is the smallest (resp. largest) function that is point-wise larger (resp. smaller) than the path of $X$, i.e. $C^{\frown}_T(t) \geq X_t$ (resp. $C^{\smile}_T(t) \leq X_t$) for all $t\in [0,T]$. Let $\Upsilon_T^{\frown}$ (resp. $\Upsilon_T^{\smile}$) denote the length 
of the graph of the concave (resp. convex) function $t\mapsto C^{\frown}_T(t)$ (resp. $t\mapsto C^{\smile}_T(t)$) over the interval $[0,T]$. The following inequalities are immediate from Figure~\ref{fig:FacesCM} below:
\begin{equation}
\label{eq:Elementary}
1\le\Upsilon_T^{\frown}/T
\le\Big(T+2
\ov{C}^\frown_T-
C^\frown_T(T)\Big)/T, \qquad\text{where $\ov{C}^\frown_T:=\sup_{t\in[0,T]}C^{\frown}_T(t)$.}
\end{equation} 
If $\E|X_1|^{1+\epsilon}<\infty$ for some $\epsilon>0$ and $\E X_1=0$, 
the bounds in~\eqref{eq:Elementary} and \cite[Prop.~48.10]{SatoBookLevy} imply that $\Upsilon_T^{\frown}/T\to1$ a.s. as $T\to\infty$
(note $\ov{C}^\frown_T=\sup_{t\in [0,T]}X_t$
and 
$C^\frown_T(T)=X_T$).
Our main aim is to identify the precise asymptotic behaviour and the dependence of the shape parameters 
$\Upsilon_T^\frown$, supremum $\ov{C}^\frown_T$,
time of supremum $\gamma^\frown_T$
and final position $C^\frown_T(T)$
of the concave majorant $C^{\frown}_T$. 
More precisely, we seek to identify 
the correct asymptotic mean, analyse the fluctuations of the length $\Upsilon_T^\frown$ around its asymptotic mean and study their dependence on other shape parameters. If the second moment is infinite, we study analogous questions for $X$ in the domain of attraction of a stable process.

Our main result  describes the asymptotic dependence between the fluctuations of the length of the concave majorant, its supremum, final position and the time the supremum is attained, for L\'evy processes that have zero mean and finite variance (see Theorem~\ref{thm:maintheorem1} below). We also describe this dependence in the case the process is in the domain of attraction of a stable law with stability parameter $\alpha\in(0,2]\setminus\{1\}$ (see Theorems~\ref{thm:Theorem2}, \ref{thm:Theorem4} and~\ref{thm:Theorem3} for $\alpha\in(1,2)$ with zero mean, $\alpha\in(1,2]$ with nonzero mean and $\alpha\in(0,1)$, respectively). As we shall see, the dependence has very different structure in each of these cases, with Theorem~\ref{thm:maintheorem1} being the most subtle. In particular, for example, the dependence between the fluctuations of the length of the concave majorant and the other statistics weakens with increasing~$\alpha$. 
For a short overview of the results in this paper see
the YouTube \href{https://youtu.be/b0AOJm-dE3g}{presentation}~\cite{Presentation_DB}.

Before stating our results, recall that the concave majorant of a path of a L\'evy process $X$ is a piecewise linear function with countably many faces (see~\cite[Thm~12]{CM_Fluctuation_Levy}). 
Each face is given by a horizontal length $l>0$ and a vertical height $h\in\R$, thus having the slope $h/l$. 
Note that all the faces with slope equal to a given real value $s\in\R$ must lie next to each other in the graph of the concave majorant and can be concatenated into a \emph{maximal} face with slope $s$. 
Let $H_T$ equal the number of maximal faces with horizontal length $l$ at least $1$. Denote $(x)^+\coloneqq\max(0,x)$ for $x\in\R$ throughout. 

\begin{theorem}
\label{thm:maintheorem1}
Let $X=(X_t)_{t\ge 0}$ be a L\'evy process with L\'evy measure $\nu$. Assume that the L\'evy process has zero mean 
$\E [X_1]=0$ and finite positive variance $\sigma\coloneqq\sqrt{\E[X_1^2]}\in(0,\infty)$. For $T>0$ define $\Theta(T)\coloneqq \frac{1}{2}
    \int_\R x^2\log^+(\min\{T,x^2\})\nu(dx)$. 
Then the following weak limit holds as $T\to\infty$:  
\begin{equation}
\label{eq:CLT-1}
\bigg(
\frac{\Upsilon_T^{\frown}
    -T-(\sigma^2/2)H_T
    +\Theta(T)}{\sqrt{\log T}},
\frac{H_T-\log T}{\sqrt{\log T}},
\frac{\ov{C}^\frown_T}{\sqrt{T}},
\frac{C^\frown_T(T)}{\sqrt{T}},
\frac{\gamma^\frown_T}{T}
\bigg)
\cid 
\Big(
\tfrac{\sigma^2}{\sqrt{2}}Z_1,
Z_2,
\sigma\ov B_1,
\sigma B_1,
\rho
\Big),
\end{equation}
where the standard Brownian motion $B=(B_t)_{t\geq0}$ and the standard normal random variables $Z_1$ and $Z_2$ are independent, $\ov B_1\coloneqq\sup_{t\in[0,1]}B_t$ and $\rho\in[0,1]$ is the a.s. unique time such that $B_\rho=\ov B_1$.
\end{theorem}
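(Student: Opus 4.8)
I would run everything through the stick‑breaking representation of~\cite{CM_Fluctuation_Levy}. Let $(\ell_n)_{n\ge1}$ be the stick‑breaking partition of $[0,T]$ (so $\ell_n=T(V_{n-1}-V_n)$ with $V_n=\prod_{i\le n}U_i$ for i.i.d.\ uniforms $U_i$, $\sum_n\ell_n=T$ a.s.) and let $(\xi_n)_{n\ge1}$ be conditionally independent given $(\ell_n)$ with $\xi_n\eqd X_{\ell_n}$. Then the five statistics are jointly distributed as $\Upsilon_T^{\frown}=\sum_n\sqrt{\ell_n^2+\xi_n^2}$, $H_T=\#\{n:\ell_n\ge1\}$, $\ov C_T^{\frown}=\sum_n\xi_n^{+}$, $C_T^{\frown}(T)=X_T=\sum_n\xi_n$, $\gamma_T^{\frown}=\sum_n\ell_n\1\{\xi_n>0\}$; equivalently $\ov C_T^{\frown},C_T^{\frown}(T),\gamma_T^{\frown}$ are the supremum of $X$ on $[0,T]$, its endpoint value, and its (a.s.\ unique) time of supremum. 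I would work on this space throughout and condition on $\mathcal L:=\sigma(\ell_n:n\ge1)$. For the three macroscopic statistics this is almost immediate: since $\E X_1=0$ and $\Var(X_1)=\sigma^2\in(0,\infty)$, Donsker's invariance principle gives $(X_{Tt}/(\sigma\sqrt T))_{t\in[0,1]}\cid(B_t)_{t\in[0,1]}$ in $D[0,1]$, and the supremum, endpoint and (last) argmax functionals are a.s.\ continuous at Wiener paths, so the continuous mapping theorem yields $\big(\ov C_T^{\frown}/\sqrt T,\ C_T^{\frown}(T)/\sqrt T,\ \gamma_T^{\frown}/T\big)\cid(\sigma\ov B_1,\sigma B_1,\rho)$.

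\textbf{Length and number of faces.} Using $\sum_n\ell_n=T$ I would write $\Upsilon_T^{\frown}-T=\sum_n(\sqrt{\ell_n^2+\xi_n^2}-\ell_n)$. The sticks with $\ell_n<1$ contribute at most $\sum_{\ell_n<1}|\xi_n|$, which has finite mean (the stick lengths below $1$ decay geometrically, using $\E|X_t|\le\sigma\sqrt t$), hence is $O_p(1)$ and negligible at scale $\sqrt{\log T}$. For the $H_T$ large sticks I would exploit the exact intensity identity $\E[\sum_n f(\ell_n)]=\int_0^T f(\ell)\,d\ell/\ell$; in particular $\E H_T=\log T$, and $H_T=N_{\log T}+O_p(1)$ for a rate‑one Poisson process $N$ (the rare discrepancy between $\{\ell_n\ge1\}$ and $\{R_{n-1}\ge1\}$, $R_n:=TV_n$, has $O(1)$ expected count), so the Poisson CLT gives $(H_T-\log T)/\sqrt{\log T}\cid Z_2$. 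For $\Upsilon_T^{\frown}$ I would use $\sqrt{\ell^2+x^2}-\ell=\tfrac{x^2}{2\ell}+\ell\phi(x/\ell)$ with $\phi(y):=\sqrt{1+y^2}-1-y^2/2\in[-y^2/2,0]$, and split each $\xi_n$ into a part with jumps truncated at level $\sqrt{\ell_n}$ (which has conditional fourth moment $\lesssim\sigma^4\ell_n^2$, so $x\mapsto x^2/(2\ell_n)$ has bounded conditional variance) and a complementary large‑jump part. The aim is to show, conditionally on $\mathcal L$,
\[
\Upsilon_T^{\frown}-T=\tfrac{\sigma^2}{2}H_T-\Theta(T)+\sum_{n\le H_T}W_n+o_p(\sqrt{\log T}),
\]
where the $W_n$ are conditionally centred with conditional variances converging to $\sigma^4/2$, and then to conclude by a Lindeberg/Lyapunov CLT for the triangular array $(W_n)_{n\le H_T}$, conditionally on $\mathcal L$, that $\sum_{n\le H_T}W_n/\sqrt{\log T}\cid\tfrac{\sigma^2}{\sqrt2}Z_1$ with a limit not depending on the realisation of $\mathcal L$.

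\textbf{Joint limit and asymptotic independence.} Conditionally on $\mathcal L$ the $(\xi_n)$ are independent, which I would use three times: (i) $\sum_{n\le H_T}W_n$ is a functional of $(\xi_n)$ whose conditional law tends to $N(0,\sigma^4/2)$ irrespective of $\mathcal L$; (ii) $H_T$ and $\Theta(T)$ are $\mathcal L$‑measurable; (iii) for fixed $K$, each of $\ov C_T^{\frown},C_T^{\frown}(T),\gamma_T^{\frown}$ agrees, after the appropriate scaling and up to an error $\to0$ as $K\to\infty$ (since $\E[\sum_{n>K}|\xi_n|/\sqrt T]\to0$ and $\sum_{n>K}\ell_n/T\le V_K\to0$), with a function of the leading block $(\ell_n,\xi_n)_{n\le K}$, whereas removing the top $K$ sticks changes $\sum_{n\le H_T}W_n$ by $O_p(1)$ and leaves the fluctuation of $H_T$ governed by a stick‑breaking count over the residual interval $[0,R_K]$ whose Poisson CLT limit $Z_2$ is free of $R_K$. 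Putting (i)–(iii) together: the $N(0,\sigma^4/2)$ limit of $\sum_{n\le H_T}W_n$ is independent of $\mathcal L$ and of $(\xi_1,\dots,\xi_K)$, hence of the limits of $(H_T-\log T)/\sqrt{\log T}$ and of the macroscopic triple; and $Z_2$, being a deep‑in‑the‑sequence Poisson fluctuation, is asymptotically independent of the leading sticks carrying the macroscopic triple. Assembling these gives the stated joint weak limit with $B$, $Z_1$, $Z_2$ independent.

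\textbf{Main obstacle.} The crux is the length asymptotics: proving, to within $o(\sqrt{\log T})$, that the conditional mean of $\Upsilon_T^{\frown}-T$ is $\tfrac{\sigma^2}{2}H_T-\Theta(T)$ and that the residual has conditional variance $\tfrac{\sigma^4}{2}\log T\,(1+o(1))$. This needs a delicate analysis of $\E[\sqrt{\ell^2+X_\ell^2}-\ell]$ and $\Var(\sqrt{\ell^2+X_\ell^2}-\ell)$ for all $\ell\ge1$ — the large jumps must be isolated to tame a possibly infinite fourth moment — together with accurate bookkeeping of how a jump of size $x$ feeds into every stick of length $\lesssim x^2$, which is precisely what produces the weight $\log^+(\min\{T,x^2\})$ in $\Theta$; note $\Theta(T)=o(\log T)$ always (by dominated convergence, as $\int x^2\,\nu(dx)<\infty$), which is what keeps the replacement of $\tfrac12\#\{n\le H_T:\ell_n<x^2\}$ by $\tfrac12\log^+(\min\{T,x^2\})$ below the fluctuation scale — but since $\Theta(T)$ may genuinely grow as fast as $\sqrt{\log T}$, this bookkeeping leaves essentially no slack. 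The asymptotic independence in the last step is also subtle, precisely because all five statistics are built from the same stick‑breaking data and only decouple in the limit.
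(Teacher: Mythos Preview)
Your overall architecture matches the paper's: stick-breaking representation, Poisson CLT for the count of long sticks, Taylor expansion $\sqrt{\ell^2+\xi^2}-\ell\approx\xi^2/(2\ell)$, a conditional CLT given $\mathcal L$ for the length fluctuation, and a Billingsley-type argument (top $K$ sticks versus the tail) to decouple the macroscopic triple from $(Z_1,Z_2)$. Two points deserve comment.

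First, a minor gap: you write $H_T=\#\{n:\ell_n\ge1\}$ as a distributional identity. This is correct only when $X$ is not compound Poisson with drift; otherwise several sticks can share the same slope (equal to the drift) and merge into a single maximal face. The paper handles this via a separate lemma showing $|H_T-\#\{n:\ell_n\ge1\}|$ is bounded in $L^1$, which is enough at scale $\sqrt{\log T}$.

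Second, the substantive difference is how the conditional CLT for $\Sigma_T:=\sum_{\ell_n\ge1}(\xi_n^2/\ell_n-\sigma_{\ell_n}^2)/(2\sqrt{\log T})$ is obtained. You propose to split each $\xi_n$ into a small-jump part (jumps truncated at $\sqrt{\ell_n}$) and a large-jump part, then run Lindeberg on the truncated array and book-keep the large jumps into $\Theta(T)$. The paper instead keeps $\xi_n$ intact, changes only the \emph{centering} from $\sigma^2$ to $\sigma_{\ell_n}^2$, and replaces $\xi_n^2/\ell_n$ by $\sigma_{\ell_n}^2 Z^2$ for a standard normal $Z$; the replacement error is controlled by a Berry--Esseen-type integrability result $\int_1^\infty t^{-1}K(t)\,dt<\infty$ (with $K(t)$ the Kolmogorov distance between $X_t/\sqrt{t}$ and $\sigma_t Z$), taken from a companion paper. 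Once the array is Gaussian, the Petrov conditions are immediate via sixth moments. Your truncation route is in principle workable, but note that a crude Markov bound with only second or fourth moments does \emph{not} yield the uniform negligibility condition $\sum_{\ell_n\ge1}\p_\ell(\xi_n^2/\ell_n>\epsilon\sqrt{\log T})\to0$ (the sum has $\sim\log T$ terms), so you genuinely need the higher moments created by truncation; and you must then show that the large-jump contribution to $\Sigma_T$ has fluctuations $o_p(\sqrt{\log T})$, not merely that its mean matches $\Theta(T)$---this is the step where the paper's Proposition on the deterministic compensator (an $L^1$ bound for $\sum_{\ell_n\ge1}f(\ell_n)-\int_1^T f(t)t^{-1}dt$ with $f(t)=\sigma^2-\sigma_t^2$) does real work. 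Your sketch acknowledges this is the crux but does not indicate how the large-jump fluctuations are damped; the paper's Gaussian-replacement argument sidesteps the issue entirely.
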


The weak limit in~\eqref{eq:CLT-1} shows that the asymptotic centering of the length $\Upsilon_T^\frown$ of the concave majorant $C^\frown_T$ is stochastic. Moreover, the fluctuations around the centering are asymptotically independent of the centering itself and  the randomness in the centering is a function of the horizontal lengths of the faces of $C^\frown_T$ only. A linear transformation of the vector in~\eqref{eq:CLT-1}
yields a deterministic centering of $\Upsilon_T^\frown$
at the cost of increasing the asymptotic variance. Put differently, the variance of the centering contributes 
$\sigma^4/4$ (recall $\sigma^2=\E [X_1^2]$) to the total asymptotic variance of the length $\Upsilon_T^\frown$. For two functions~$f$ and~$g$, write $f(T)=\oh(g(T))$ as $T\to\infty$ if $\lim_{T\to\infty}f(T)/g(T)=0$. 

\begin{corollary}
\label{cor:2nd-moment}
Under the assumptions of Theorem~\ref{thm:maintheorem1}, we have 
\begin{equation}
\label{eq:CLT-2}
\frac{1}{\sqrt{\log T}}
\left(\Upsilon_T^{\frown}
    -T-\frac{\sigma^2}{2}\log T
    +\Theta(T)\right)
\cid \frac{\sqrt{3}}{2}\sigma^2 Z,
\qquad\text{as }T\to\infty,
\end{equation}
where $\Theta(T)=\tfrac{1}{2}\int_\R x^2\log^+(\min\{T,x^2\})\nu(dx)=\oh(\log T)$ and $Z$ is a standard normal variable. Moreover, if $\int_\R x^2\log^+(|x|)^{1/2}\nu(dx)<\infty$, then $\Theta(T)=\oh(\sqrt{\log T})$, and thus 
\begin{equation}
\label{eq:CLT-3}
\frac{1}{\sqrt{\log T}}
\left(\Upsilon_T^{\frown}
    -T-\frac{\sigma^2}{2}\log T\right)
\cid \frac{\sqrt{3}}{2}\sigma^2 Z,
\qquad\text{as }T\to\infty.
\end{equation}
\end{corollary}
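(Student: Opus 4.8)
The plan is to derive the Corollary purely from the joint weak convergence~\eqref{eq:CLT-1} of Theorem~\ref{thm:maintheorem1}, together with two elementary dominated-convergence estimates on $\Theta(T)$.

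First I would establish~\eqref{eq:CLT-2}. The key is the algebraic identity
\[
\frac{\Upsilon_T^{\frown}-T-(\sigma^2/2)\log T+\Theta(T)}{\sqrt{\log T}}
=\frac{\Upsilon_T^{\frown}-T-(\sigma^2/2)H_T+\Theta(T)}{\sqrt{\log T}}
+\frac{\sigma^2}{2}\cdot\frac{H_T-\log T}{\sqrt{\log T}},
\]
which exhibits the left-hand side as the image of the five-dimensional vector in~\eqref{eq:CLT-1} under the continuous map $(x_1,\dots,x_5)\mapsto x_1+(\sigma^2/2)x_2$. The continuous mapping theorem then gives weak convergence to $\tfrac{\sigma^2}{\sqrt2}Z_1+\tfrac{\sigma^2}{2}Z_2$; since $Z_1$ and $Z_2$ are independent standard normals, this is a centred Gaussian with variance $\sigma^4/2+\sigma^4/4=3\sigma^4/4$, i.e.\ equal in law to $\tfrac{\sqrt3}{2}\sigma^2 Z$. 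This is precisely~\eqref{eq:CLT-2}.

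Next I would verify $\Theta(T)=\oh(\log T)$. For $T\ge e$ one has $0\le\log^+(\min\{T,x^2\})\le\log T$, so the integrand $x^2\log^+(\min\{T,x^2\})/\log T$ in $2\Theta(T)/\log T$ is bounded above by $x^2\1\{|x|>1\}$, which is $\nu$-integrable since the finite-variance hypothesis forces $\int_\R x^2\,\nu(dx)<\infty$; moreover, for each fixed $x$ this integrand tends to $0$ as $T\to\infty$, because $\min\{T,x^2\}$ is eventually the constant $x^2$ while $\log T\to\infty$. Dominated convergence yields $\Theta(T)/\log T\to0$.

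Finally, under the assumption $\int_\R x^2(\log^+|x|)^{1/2}\,\nu(dx)<\infty$ I would show $\Theta(T)=\oh(\sqrt{\log T})$ by the same method, splitting the integral defining $\Theta(T)$ at $|x|=\sqrt T$. On $\{|x|\le\sqrt T\}$ one has $\log^+(\min\{T,x^2\})=2\log^+|x|$ with $\log^+|x|\le\tfrac12\log T$, whereas on $\{|x|>\sqrt T\}$ one has $\log^+(\min\{T,x^2\})=\log T\le 2\log^+|x|$ (for $T\ge1$); in both regions the integrand of $2\Theta(T)/\sqrt{\log T}$ is therefore at most $\sqrt2\,x^2(\log^+|x|)^{1/2}$ uniformly in $T\ge1$, and it tends to $0$ pointwise. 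Dominated convergence gives $\Theta(T)/\sqrt{\log T}\to0$, after which~\eqref{eq:CLT-3} follows from~\eqref{eq:CLT-2} and Slutsky's lemma applied to the vanishing summand $\Theta(T)/\sqrt{\log T}$. The argument is entirely routine; the only point requiring care is calibrating the threshold $|x|=\sqrt T$ in the last step so that the two pieces are simultaneously controlled by the single integrable majorant $x^2(\log^+|x|)^{1/2}$.
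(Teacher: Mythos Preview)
Your proposal is correct and follows essentially the same approach as the paper: derive~\eqref{eq:CLT-2} from~\eqref{eq:CLT-1} via the linear map $(x_1,x_2)\mapsto x_1+(\sigma^2/2)x_2$ and the continuous mapping theorem, then establish the two growth bounds on $\Theta(T)$ by dominated convergence. The paper's write-up is terser (it leaves the majorant for the $\oh(\sqrt{\log T})$ claim implicit), but your explicit splitting at $|x|=\sqrt{T}$ and the resulting bound $\sqrt{2}\,x^2(\log^+|x|)^{1/2}$ is exactly the calculation needed to justify its one-line ``similarly''.
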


Further remarks about Theorem~\ref{thm:maintheorem1} 
and Corollary~\ref{cor:2nd-moment} are in order.

\begin{remark}\normalfont
\textbf{(i)} The limit in~\eqref{eq:CLT-1} reveals that the fluctuations of the asymptotic length of the  concave majorant $C_T^\frown$ are independent of its asymptotic supremum, time of supremum and final position. In the case only the first moment of $X_1$ is finite, the dependence of these shape statistics persists in the limit (see Theorem~\ref{thm:Theorem2} below), while if even the first moment of $X_1$ is infinite, the length $\Upsilon_T^\frown$ becomes a deterministic function of the asymptotic supremum and final position (see Theorem~\ref{thm:Theorem3} below). \\
\noindent \textbf{(ii)} Corollary~\ref{cor:2nd-moment} is stated for the deterministic centering of the length only. However, the
same linear transform yields a quintuple limit analogous to~\eqref{eq:CLT-1}. Put differently, 
as $T\to\infty$, we have
\begin{equation*}
\bigg(
\frac{\Upsilon_T^{\frown}
    -T-(\sigma^2/2)\log T
    +\Theta(T)}{\sqrt{\log T}},
\frac{H_T-\log T}{\sqrt{\log T}},
\frac{\ov{C}^\frown_T}{\sqrt{T}},
\frac{C^\frown_T(T)}{\sqrt{T}},
\frac{\gamma^\frown_T}{T}
\bigg)
\cid 
\Big(
\tfrac{\sigma^2}{\sqrt{2}}Z_1+\tfrac{\sigma^2}{2}Z_2,
Z_2,
\sigma\ov B_1,
\sigma B_1,
\rho
\Big).
\end{equation*}
The dependence structure of the length $\Upsilon_T^\frown$ and $H_T$ is intractable for any finite $T>0$, but, as shown by this limit, is asymptotically rather simple.\\
\noindent \textbf{(iii)}
There exist L\'evy processes for which~\eqref{eq:CLT-2} holds and~\eqref{eq:CLT-3} does not. Indeed, by Fubini's theorem, the integral in the asymptotic mean satisfies 
\[
2\Theta(T)
=\int_\R x^2\log^+(\min\{T, x^2\})\nu(dx)
=\int_1^T\frac{1}{t}\int_{\R\setminus(-\sqrt{t},\sqrt{t})}
    x^2\nu(dx)dt,
\]
and is clearly $\oh(\log T)$ as $T\to\infty$. (Note that $\int_\R x^2\nu(dx)<\infty$ by assumption $\E[X_1^2]<\infty$ and~\cite[Thm~25.3]{SatoBookLevy}.) However, the integral in display can also be arbitrarily close to $\log T$. For instance, setting $\nu(dx):=|x|^{-3}\log(|x|)^{-1}\log\log(|x|)^{-2}dx$, then $\int_{\R\setminus(-t,t)}x^2\nu(dx)=2\log\log(t)^{-1}$ and the integral in display becomes proportional to $(\log\log T)^{-1}\log T$. \\
\noindent\textbf{(iv)} Note that in the weak limit of Theorem~\ref{thm:maintheorem1}  neither $X$ nor its concave majorant $C_T^\frown$ are scaled before the length $\Upsilon_T^\frown$ is calculated. Since $X$ is in the domain of attraction of Brownian motion, one could scale space by $1/\sqrt{T}$ and time by $1/T$ and \emph{then} compute the length of the graph of the resulting concave majorant. This length would, by continuity, converge to the length of the concave majorant of a Brownian motion on $[0,1]$. For the original length $\Upsilon_T^\frown$, this approach only yields  $\Upsilon_T^\frown/T\cid 1$.\\
\noindent\textbf{(v)} To the best of our knowledge, Theorem~\ref{thm:maintheorem1} had been established neither for Brownian motion nor compound Poisson processes. Moreover, the marginal convergence in Corollary~\ref{cor:2nd-moment} does not follow easily from the random walk case, recently analysed in~\cite{RWpaper}, since, for instance, the law of the length of the convex minorant is not invariant under stochastic time-changes, see Figure~\ref{fig:RWvsCP} 
below. 
\end{remark}

A L\'evy process $X$ is in the domain of attraction of an $\alpha$-stable law for some $\alpha\in(0,2]$ if 
\begin{equation}
\label{eq:attract}
X_{T}/a_T\cid S_\alpha(1),
\quad\text{as $T\to\infty$, for some positive function $a_T=T^{1/\alpha}l(T)$,}
\end{equation}
where $l$ is slowly varying (i.e. $l(cx)/l(x)\to 1$ as $x\to\infty$ for all $c>0$) and $(S_\alpha(t))_{t\ge 0}$ is an $\alpha$-stable process (see~\cite[Ch.~3]{SatoBookLevy} for definition).
We note that if $X$ is as in Theorem~\ref{thm:maintheorem1} ($\E[X_1]=0$ and $\sigma=\sqrt{\E[X_1^2]}<\infty$), the standard CLT implies that $X$ is in the domain of attraction of the normal random variable $S_2(1)\sim N(0,\sigma^2)$ with $a_T=\sqrt{T}$. 
Results analogous to Theorem~\ref{thm:maintheorem1} for L\'evy process in the domain of attraction of an $\alpha$-stable law will now be presented: the case $\alpha\in(1,2)$ with $\E[X_1]=0$ (resp. $\alpha\in(1,2]$ with $\E[X_1]\ne 0$; $\alpha\in(0,1)$) is considered in Theorem~\ref{thm:Theorem2} (resp. Theorem~\ref{thm:Theorem4}; Theorem~\ref{thm:Theorem3}). 
To state these theorems, we recall that the uniform stick-breaking process $(\ell_n)_{n\in\N}$ on $[0,1]$ is defined recursively by an iid-$\U(0,1)$ sequence $(V_n)_{n\in\N}$  as follows: $L_0\coloneqq 1$, $\ell_n\coloneqq V_nL_{n-1}$ and $L_n\coloneqq L_{n-1}-\ell_n$ for $n\in\N$. The process $(L_n)_{n\in\N\cup\{0\}}$ will be referred to as the stick-remainders. 

\begin{theorem}\label{thm:Theorem2} 
Assume $X$ is in the domain of attraction of an $\alpha-$stable law with $\alpha\in(1,2)$ and $\E[X_1]=0$. 
Then, as $T\to\infty$, we have 
\begin{equation}\label{eq:Theorem1.2convergence}
\begin{split}
\left(\frac{T}{a_T^2}\right.&\left.
    \left(\Upsilon_T^{\frown}-T \right),
    \frac{\ov{C}^\frown_T}{a_T},
    \frac{C_T^\frown(T)}{a_T},\frac{\gamma_T^\frown}{T} 
    \right)\\  
&\cid\left(\frac{1}{2}\sum_{n=1}^\infty 
    \ell_n^{2/\alpha-1}\big(S_\alpha^{(n)}\big)^2,
    \sum_{n=1}^\infty \ell_n^{1/\alpha} 
    \big( S_\alpha^{(n)}\big)^+,
    \sum_{n=1}^\infty\ell_n^{1/\alpha}S_\alpha^{(n)},
    \sum_{n=1}^\infty\ell_n
    \1_{\{S_\alpha^{(n)}>0\}}
    \right),
\end{split}
\end{equation}
where $(\ell_n)_{n\in\N}$ is a uniform stick-breaking process independent of the sequence $(S_\alpha^{(n)})_{n\in \N}$ of independent copies of $S_\alpha(1)$. 
\end{theorem}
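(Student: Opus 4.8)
\medskip
\noindent\textbf{Proof strategy.}
The plan is to reduce the statement to the stick-breaking representation of $C_T^\frown$ from~\cite{CM_Fluctuation_Levy}. Let $(\ell_n)_{n\ge1}$ be a uniform stick-breaking process on $[0,1]$ and, conditionally on $(\ell_n)_{n\ge1}$, let $(\xi_n)_{n\ge1}$ be independent with $\xi_n\eqd X_{T\ell_n}$; then
\[
\big(\Upsilon_T^{\frown},\ov{C}^\frown_T,C^\frown_T(T),\gamma^\frown_T\big)
\eqd
\Big(\sum_{n\ge1}\sqrt{(T\ell_n)^2+\xi_n^2},\ \sum_{n\ge1}\xi_n^+,\ \sum_{n\ge1}\xi_n,\ \sum_{n\ge1}T\ell_n\1_{\{\xi_n>0\}}\Big),
\]
and, since $\sum_{n\ge1}\ell_n=1$ a.s., also $\Upsilon_T^{\frown}-T=\sum_{n\ge1}\big(\sqrt{(T\ell_n)^2+\xi_n^2}-T\ell_n\big)$. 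As $(\ell_n)_{n\ge1}$ does not depend on $T$, I would argue conditionally on it and combine a finite-dimensional limit for the truncated series with a negligibility estimate for the tails that is uniform in large $T$; the standard approximation lemma then yields~\eqref{eq:Theorem1.2convergence}.

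\noindent\emph{Finite-dimensional convergence.} Fix $N\in\N$. Conditionally on $(\ell_n)_{n\ge1}$ the characteristic function of $(\xi_1/a_T,\dots,\xi_N/a_T)$ factorises, and for each $i\le N$ the domain-of-attraction assumption~\eqref{eq:attract} together with the regular variation $a_{ct}/a_t\to c^{1/\alpha}$ gives $\xi_i/a_T\cid\ell_i^{1/\alpha}S_\alpha^{(i)}$; hence, jointly with $(\ell_n)_{n\ge1}$ and passing to the unconditional law by bounded convergence, $\big((\ell_n)_{n\ge1},(\xi_i/a_T)_{i\le N}\big)\cid\big((\ell_n)_{n\ge1},(\ell_i^{1/\alpha}S_\alpha^{(i)})_{i\le N}\big)$. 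The continuous mapping theorem then gives convergence of $\sum_{i\le N}\xi_i^+/a_T$, $\sum_{i\le N}\xi_i/a_T$ and $\sum_{i\le N}\ell_i\1_{\{\xi_i>0\}}$ to the corresponding truncated limits (for the last one using $\p(S_\alpha(1)=0)=0$); for the length, the elementary bound $\big|\sqrt{(T\ell_i)^2+\xi_i^2}-T\ell_i-\xi_i^2/(2T\ell_i)\big|\le\xi_i^4/(8(T\ell_i)^3)$ gives
\[
\frac{T}{a_T^2}\Big(\sqrt{(T\ell_i)^2+\xi_i^2}-T\ell_i\Big)=\frac{(\xi_i/a_T)^2}{2\ell_i}+\frac{a_T^2}{8T^2\ell_i^3}(\xi_i/a_T)^4,
\]
whose second term vanishes in probability because $a_T^2/T^2\to0$ --- this is the point where $\alpha>1$ is used --- so $\frac{T}{a_T^2}\sum_{i\le N}\big(\sqrt{(T\ell_i)^2+\xi_i^2}-T\ell_i\big)\cid\tfrac12\sum_{i\le N}\ell_i^{2/\alpha-1}(S_\alpha^{(i)})^2$.

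\noindent\emph{Negligibility of the tails.} For $\gamma_T^\frown/T$ the tail is $\sum_{n>N}\ell_n=L_N\to0$ a.s. For $\ov{C}^\frown_T/a_T$ and $C_T^\frown(T)/a_T$ a conditional Markov inequality reduces matters to $\sum_{n>N}\E|X_{T\ell_n}|/a_T$, bounded using $\E|X_t|\lesssim a_t$ for $t\ge1$ and $\E|X_t|\lesssim\sqrt t$ for $t\le1$ (valid since $\E|X_1|<\infty$ when $\alpha>1$), together with the geometric decay of $\E[\ell_n^{1/\alpha}]=(\tfrac{\alpha}{\alpha+1})^n$ and of $\E[\ell_n^{1/2}]$ (here $\sqrt T/a_T\to0$ absorbs the short faces). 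The four limit series converge a.s.: the first two in $L^1$ (as $\E|S_\alpha(1)|<\infty$), the third since $\sum_n\ell_n\1_{\{S_\alpha^{(n)}>0\}}\le1$, and $\sum_n\ell_n^{2/\alpha-1}(S_\alpha^{(n)})^2<\infty$ a.s.\ follows from $\E[\ell_n^{2/\alpha-1}(S_\alpha^{(n)})^2\wedge1\mid(\ell_k)_{k\ge1}]\lesssim\ell_n^{1-\alpha/2}$ (using $\p(|S_\alpha(1)|>x)\asymp x^{-\alpha}$) and the geometric decay of $\E[\ell_n^{1-\alpha/2}]$.

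\noindent\emph{The main obstacle: the tail of the length.} The delicate point is the tail estimate for $\Upsilon_T^\frown-T$, since its limit $\tfrac12\sum_n\ell_n^{2/\alpha-1}(S_\alpha^{(n)})^2$ has infinite mean, so no $L^1$ truncation is available. Writing $Y_n\coloneqq\sqrt{(T\ell_n)^2+\xi_n^2}-T\ell_n$ and $K\coloneqq\varepsilon a_T^2/(2T)$, I would use
\[
\p\Big(\sum_{n>N}Y_n>2K\ \Big|\ (\ell_k)_{k\ge1}\Big)\le\sum_{n>N}\p\big(Y_n>K\mid(\ell_k)_{k\ge1}\big)+\frac{1}{2K}\sum_{n>N}\E\big[Y_n\1_{\{Y_n\le K\}}\mid(\ell_k)_{k\ge1}\big].
\]
As $\{Y_n>K\}=\{|\xi_n|>\sqrt{K^2+2KT\ell_n}\,\}$ with $K^2$ negligible against $2KT\ell_n$, the first sum is at most $\sum_{n>N}\p(|X_{T\ell_n}|>a_T\sqrt{\varepsilon\ell_n})$, which by the uniform large-deviation bound $\p(|X_t|>x)\lesssim t\,\nu(\{|y|>x\})$ (valid for $x\gtrsim a_t$), the regular variation of $x\mapsto\nu(\{|y|>x\})$ of index $-\alpha$, and a Potter bound is $\lesssim\varepsilon^{-(\alpha+\delta)/2}\sum_{n>N}\ell_n^{1-(\alpha+\delta)/2}$; the second sum, using the truncated-moment bound $\E[X_t^2\1_{\{|X_t|\le m\}}]\lesssim a_t^2+t\,m^2\nu(\{|y|>m\})$ with $m\asymp a_T\sqrt{\varepsilon\ell_n}$, is $\lesssim\varepsilon^{-1}\sum_{n>N}\ell_n^{2/\alpha-1-\delta}+\varepsilon^{-(\alpha+\delta)/2}\sum_{n>N}\ell_n^{1-(\alpha+\delta)/2}$. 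For $\alpha\in(1,2)$ and $\delta>0$ small the exponents $1-(\alpha+\delta)/2$ and $2/\alpha-1-\delta$ are positive and $\E[\ell_n^{1-(\alpha+\delta)/2}]$, $\E[\ell_n^{2/\alpha-1-\delta}]$ decay geometrically, so both sums are dominated by a.s.-finite tails of convergent series, uniformly in $T$; integrating out $(\ell_k)_{k\ge1}$ completes the estimate and hence the proof.
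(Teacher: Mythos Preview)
Your overall strategy---stick-breaking representation, Billingsley's approximation lemma~\cite[Thm~3.2]{BillingsleyConvergence}, finite-dimensional convergence conditionally on $\ell$, and tail negligibility---matches the paper's proof exactly. The one substantive difference is your tail estimate for the length $\sum_{n>N}Y_n$. You truncate at level $K$ and invoke a heavy-tail large-deviation bound $\p(|X_t|>x)\le Ct\,\nu(\{|y|>x\})$ for $x$ large relative to $a_t$, together with a truncated second-moment estimate; these hold for L\'evy processes attracted to a stable law but are not entirely standard and would need proof, and you also have to split off the short faces $T\ell_n<1$ before Potter's bound applies (you do this for $\ov C_T^\frown$ and $C_T^\frown(T)$ but gloss over it for the length). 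The paper's argument here is much simpler: it first records the uniform moment bound $\sup_{t\ge1}\E[|X_t/a_t|^p]<\infty$ for every $p<\alpha$ (Lemma~\ref{lem:stable_mom}), then writes $R_n\coloneqq(T/a_T^2)Y_n\le\varrho_n^2\eta_n^2/\ell_n$ with $\eta_n=\xi_n/a_{t_n}$ and $\varrho_n=a_{t_n}/a_T$, and applies Markov's inequality at a power $p\in(0,\alpha/2)$ together with the subadditivity of $x\mapsto x^p$ to obtain $\p\big(\sum_{n\ge k,\,n\in\GI_T}R_n>\epsilon\big)\le\epsilon^{-p}\sum_{n\ge k}\E[R_n^p]$. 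Potter's theorem with an index $q\in(1/2,1/\alpha)$ then gives $\E[R_n^p]\le C\,\E[\ell_n^{p(2q-1)}]$, which decays geometrically in $n$. This replaces your truncation machinery by a single moment bound and sidesteps the large-deviation input altogether; your route would also work once the auxiliary bounds are justified, but at greater cost.
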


Under the assumptions of Theorem~\ref{thm:Theorem2}, the L\'evy process $X$ has infinite variance.  By~\eqref{eq:Theorem1.2convergence}, the fluctuations of $\Upsilon_T^\frown$ about its centering function are typically of order $T^{2/\alpha-1}$, compared with the fluctuations of order $\sqrt{\log T}$ in the finite variance case (see Theorem~\ref{thm:maintheorem1} above). The last three coordinates of the limit law in~\eqref{eq:Theorem1.2convergence} have the same law as  $(\sup_{t\in[0,1]}S_\alpha(t),S_\alpha(1),\gamma^{\alpha\frown})$, where $\gamma^{\alpha\frown}$ is the time at which the supremum of $S_\alpha(t)$ over $t\in[0,1]$ is attained. We do not know of an interpretation of the law of the first coordinate as a simple functional of the path of the stable process $S_\alpha$. In particular, it is \emph{not} equal to the law of the length of the concave majorant of $S_\alpha$ on $[0,1]$. However, the tail decay of this coordinate can be characterised using the fact that the law of the series $\sum_{n=1}^\infty \ell_n^{2/\alpha-1}(S_\alpha^{(n)})^2$ satisfies a stochastic perpetuity equation.
\begin{proposition}\label{prop:tailbehaviour}
The following asymptotic equivalence holds 
\begin{align*}
\lim_{x\to\infty}\frac{\p\big(\frac{1}{2}\sum_{n=1}^\infty 
    \ell_n^{2/\alpha-1}(S_\alpha^{(n)})^2>x\big)}{\p((S_\alpha^{(1)})^2>x)}
=\lim_{x\to\infty}\frac{\p\big(\frac{1}{2}\sum_{n=1}^\infty 
    \ell_n^{2/\alpha-1}(S_\alpha^{(n)})^2>x\big)}{(c_+ + c_-)x^{-\alpha/2}}
=\frac{2^{1-\alpha/2}}{2-\alpha},
\end{align*} 
for the constants $c_+,c_-\ge 0$ defined by  
$c_\pm\coloneqq \lim_{x\to\infty}\p(\pm S_\alpha^{(1)}>\sqrt{x})/x^{-\alpha/2}$, which satisfy $c_++c_->0$.
\end{proposition}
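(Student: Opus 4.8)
The plan is to exploit the recursive self-similarity of the uniform stick-breaking process to recast the random variable $W\coloneqq\tfrac{1}{2}\sum_{n=1}^\infty\ell_n^{2/\alpha-1}\big(S_\alpha^{(n)}\big)^2$ as a perpetuity and then to read off its right tail (note $W\ge0$) from the classical tail theory for random difference equations. Writing $U\coloneqq V_1\sim\U(0,1)$, we have $\ell_1=U$ and, for $n\ge2$, $\ell_n=(1-U)\,\ell_{n-1}'$, where $(\ell_m')_{m\in\N}$ is the uniform stick-breaking process generated by $(V_{m+1})_{m\in\N}$ and is thus independent of $U$. Peeling off the first summand and using $2/\alpha-1>0$ yields
\begin{equation*}
W\eqd B+A\,W',\qquad A\coloneqq(1-U)^{2/\alpha-1}\in[0,1],\qquad B\coloneqq\tfrac{1}{2} U^{2/\alpha-1}\big(S_\alpha^{(1)}\big)^2\ge0,
\end{equation*}
where $W'\coloneqq\tfrac{1}{2}\sum_{m\ge1}(\ell_m')^{2/\alpha-1}\big(S_\alpha^{(m+1)}\big)^2\eqd W$ is a function of $(V_{m+1},S_\alpha^{(m+1)})_{m\in\N}$ and is hence independent of the pair $(A,B)$, which depends only on $(U,S_\alpha^{(1)})$. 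Iterating gives $W\eqd\sum_{k\ge0}\big(\prod_{j=1}^k A_j\big)B_{k+1}$ with $(A_j,B_j)_{j\in\N}$ i.i.d.\ copies of $(A,B)$, and the series converges a.s.\ because every partial product lies in $[0,1]$.

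Next I would pin down the tail of the additive innovation $B$. Since $\alpha\in(1,2)$, the law $S_\alpha(1)$ is non-degenerate, so the limits $c_\pm=\lim_{y\to\infty}\p(\pm S_\alpha(1)>y)/y^{-\alpha}$ exist with $c_++c_->0$, and therefore $\p\big((S_\alpha^{(1)})^2>x\big)=\p\big(|S_\alpha^{(1)}|>\sqrt x\big)\sim(c_++c_-)x^{-\alpha/2}$; that is, $(S_\alpha^{(1)})^2$ is regularly varying with index $-\alpha/2\in(-1,-1/2)$. As $U^{2/\alpha-1}\in[0,1]$ is independent of $(S_\alpha^{(1)})^2$ and has all moments finite, Breiman's lemma applies, and using $(\tfrac{2}{\alpha}-1)\tfrac{\alpha}{2}=1-\tfrac{\alpha}{2}$ together with $\E\big[U^{1-\alpha/2}\big]=\tfrac{1}{2-\alpha/2}$ it gives
\begin{equation*}
\p(B>x)=\p\big(U^{2/\alpha-1}(S_\alpha^{(1)})^2>2x\big)\sim\E\big[U^{1-\alpha/2}\big](c_++c_-)(2x)^{-\alpha/2}=\frac{(c_++c_-)\,2^{-\alpha/2}}{2-\alpha/2}\,x^{-\alpha/2},
\end{equation*}
so $B$ is regularly varying with the same index $-\alpha/2$.

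Finally I would invoke the perpetuity tail theorem. A direct computation gives $\E\big[A^{\alpha/2}\big]=\E\big[(1-U)^{1-\alpha/2}\big]=\tfrac{1}{2-\alpha/2}\in(0,1)$, while $\E\big[A^{\alpha/2+\epsilon}\big]<\infty$ trivially and $\E[W^\gamma]<\infty$ for all $\gamma<\alpha/2$ (since $\E[B^\gamma]<\infty$ and $\E[A^\gamma]<1$ there, together with subadditivity of $x\mapsto x^\gamma$ as $\gamma<1$). Thus $B$ is regularly varying with index $-\alpha/2$ and the multiplicative factor $A$ is ``subcritical'' at that index, so the Grincevi\v{c}ius--Grey theorem (see also the monograph of Buraczewski, Damek and Mikosch) applies and yields
\begin{equation*}
\p(W>x)\sim\frac{\p(B>x)}{1-\E[A^{\alpha/2}]}=\frac{(c_++c_-)\,2^{-\alpha/2}}{2-\alpha/2}\cdot\frac{2-\alpha/2}{1-\alpha/2}\,x^{-\alpha/2}=\frac{2^{1-\alpha/2}}{2-\alpha}\,(c_++c_-)\,x^{-\alpha/2}.
\end{equation*}
Dividing by $\p\big((S_\alpha^{(1)})^2>x\big)\sim(c_++c_-)x^{-\alpha/2}$, as well as by $(c_++c_-)x^{-\alpha/2}$ itself, gives both asserted limits.

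The main obstacle is the last step: one has to verify that in the series $\sum_{k\ge0}\big(\prod_{j\le k}A_j\big)B_{k+1}$ only a single large innovation $B_{k+1}$ contributes asymptotically, producing the exact constant $\sum_{k\ge0}\E\big[(\prod_{j\le k}A_j)^{\alpha/2}\big]=\sum_{k\ge0}(\E[A^{\alpha/2}])^k=(1-\E[A^{\alpha/2}])^{-1}$, rather than a Kesten-type contribution from the multiplicative chain. This is clean here because $\prod_{j\le k}A_j\le1$, which rules out any $\kappa>0$ with $\E[A^\kappa]=1$, and because the steps $(A_j,B_j)$ are i.i.d.\ even though $A_j$ and $B_j$ are dependent within a step --- exactly the generality the cited theorems allow. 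Should a self-contained argument be preferred, the lower bound $\p(W>x)\gtrsim\p(B>x)$ is immediate from $W\ge B$, and the matching upper bound follows from Potter bounds applied to the conditional tails $\p\big(\prod_{j\le k}A_j\,B_{k+1}>(1-\delta)x\mid A_1,\dots,A_k\big)$ combined with the single-big-jump principle for sums of independent regularly varying terms, using $\prod_{j\le k}A_j\le1$ for summable, $k$-uniform control. The Breiman estimate and the bookkeeping of the constants are then routine.
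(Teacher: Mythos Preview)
Your proof is correct and follows essentially the same approach as the paper: both derive the stochastic fixed-point equation $W\eqd AW'+B$ with $A=L_1^{2/\alpha-1}=(1-U)^{2/\alpha-1}$ and $B=\tfrac{1}{2}\ell_1^{2/\alpha-1}(S_\alpha^{(1)})^2$, apply Breiman's lemma to obtain the tail of $B$, and then invoke the perpetuity tail theorem (the paper cites \cite[Thm~2.4.3 and Lem.~B.5.1]{PerpetuityBook}, which are precisely the Grincevi\v{c}ius--Grey and Breiman results you name) to conclude. Your write-up is somewhat more explicit in verifying the hypotheses, but the route and the arithmetic are the same.
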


Note that in Theorems~\ref{thm:maintheorem1} and~\ref{thm:Theorem2}, we have assumed that $X$ has a finite first moment and $\E[X_1]=0$. If the mean is not zero, the behaviour in these cases is described by the following result. In this description, it is important to distinguish between the cases of positive and negative mean. 
\begin{theorem}\label{thm:Theorem4}
Assume $\mu\coloneqq  \E X_1 \ne 0$ and that $X$ is in the domain of attraction of an $\alpha$-stable law with $\alpha \in (1,2]$.

\noindent(a) Suppose $\mu>0$, then we have 
\begin{align}
\bigg( 
\frac{1}{a_T}\left(\Upsilon^\frown_T
    -\sqrt{1+\mu^2}T\right),
\frac{\ov{C}_T^\frown(T)-\mu T}{a_T},
    \frac{C_T^\frown-\mu T}{a_T}\bigg)
\cid
S_\alpha(1)\bigg(
    \frac{\mu}{\sqrt{1+\mu^2}},1,1\bigg),
\qquad\text{as }T \to \infty.
\end{align} 

\noindent(b) Suppose $\mu<0$ and let 
$(\ov{X}_\infty,\gamma_\infty^\frown)$ be the a.s. finite 
limit of the supremum and its time 
$(\ov{C}_T^\frown,\gamma_T^\frown)$ as $T\to\infty$. Then 
\begin{align}
\bigg( 
\frac{1}{a_T}\left(\Upsilon^\frown_T
    -\sqrt{1+\mu^2}T\right),
    \ov{C}_T^\frown,
    \frac{C_T^\frown(T)-\mu T}{a_T}, 
    \gamma_T^\frown\bigg)
\cid
\bigg(
\frac{\mu}{\sqrt{1+\mu^2}}S_\alpha(1),
\ov{X}_\infty,S_\alpha(1),\gamma_\infty^\frown\bigg),
\end{align}  
as $T\to\infty$, where $S_\alpha(1)$ and 
$(\ov{X}_\infty,\gamma_\infty^\frown)$ are independent.
\end{theorem}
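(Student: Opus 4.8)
The plan is to exploit the stick-breaking representation of the concave majorant from~\cite{CM_Fluctuation_Levy}: on $[0,T]$ the faces of $C_T^\frown$ can be encoded by the uniform stick-breaking lengths $(T\ell_n)_{n\in\N}$ together with conditionally independent increments $\xi_n\eqd X_{T\ell_n}$. Sorting the faces by decreasing slope yields $\Upsilon_T^\frown=\sum_{n\ge1}\sqrt{(T\ell_n)^2+\xi_n^2}$, while $C_T^\frown(T)=X_T=\sum_{n\ge1}\xi_n$, $\ov C_T^\frown=\sum_{n\ge1}\xi_n^+$ and $\gamma_T^\frown=\sum_{n\ge1}T\ell_n\1_{\{\xi_n>0\}}$. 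The key difference from the zero-mean cases is that now each increment has a dominant deterministic drift: $\xi_n\approx \mu T\ell_n + a_{T\ell_n}S_\alpha^{(n)}$, with the fluctuation term of strictly smaller order $a_{T\ell_n}=o(T\ell_n)$ since $\alpha>1$.

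\emph{Case (a), $\mu>0$.} Here every face has positive slope for large $T$, so $\ov C_T^\frown=C_T^\frown(T)=X_T$ and $\gamma_T^\frown=T$ eventually, reducing the last two coordinates to the classical statement $X_T=\mu T+a_T S_\alpha(1)+o(a_T)$, i.e.~$(X_T-\mu T)/a_T\cid S_\alpha(1)$. For the length, I would Taylor-expand $\sqrt{(T\ell_n)^2+\xi_n^2}=T\ell_n\sqrt{1+(\xi_n/(T\ell_n))^2}$ around $\xi_n/(T\ell_n)=\mu$, giving $\sqrt{(T\ell_n)^2+\xi_n^2}=T\ell_n\sqrt{1+\mu^2}+\frac{\mu}{\sqrt{1+\mu^2}}(\xi_n-\mu T\ell_n)+R_n$ where $R_n$ is a second-order remainder of order $(\xi_n-\mu T\ell_n)^2/(T\ell_n)$. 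Summing over $n$ and using $\sum_n T\ell_n=T$ gives $\Upsilon_T^\frown-\sqrt{1+\mu^2}T=\frac{\mu}{\sqrt{1+\mu^2}}\big(X_T-\mu T\big)+\sum_n R_n$. The main point is then to show $\frac1{a_T}\sum_n R_n\cip 0$; combined with the scaling limit of $X_T-\mu T$ this yields the stated convergence with the degenerate limit $S_\alpha(1)(\mu/\sqrt{1+\mu^2},1,1)$.

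\emph{Case (b), $\mu<0$.} Now the process drifts to $-\infty$, so $(\ov C_T^\frown,\gamma_T^\frown)$ converges a.s.\ to a finite limit $(\ov X_\infty,\gamma_\infty^\frown)$ (a standard fluctuation-theory fact; these two coordinates are asymptotically frozen and only a finite initial block of faces has positive slope), whereas $C_T^\frown(T)-\mu T=X_T-\mu T$ again satisfies the $\alpha$-stable scaling limit. For the length I would again Taylor-expand, but now around slope $\mu<0$: the same expansion gives $\Upsilon_T^\frown-\sqrt{1+\mu^2}T=\frac{\mu}{\sqrt{1+\mu^2}}(X_T-\mu T)+\sum_n R_n$, and one shows $\sum_n R_n=o(a_T)$ in probability. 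The remaining task is to establish the \emph{independence} in the limit: $S_\alpha(1)$ (the joint limit of the first and third coordinates, which are asymptotically perfectly correlated multiples of $(X_T-\mu T)/a_T$) should be independent of $(\ov X_\infty,\gamma_\infty^\frown)$. This is plausible because $(\ov X_\infty,\gamma_\infty^\frown)$ is measurable with respect to a finite number of early faces while the scaling limit of $X_T-\mu T$ is driven by the cumulative contribution of all faces and is insensitive to any fixed finite prefix; I would make this rigorous by conditioning on the first $k$ stick-breaking blocks, applying the scaling limit to the remainder, and letting $k\to\infty$.

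\emph{Main obstacle.} The technical heart in both cases is controlling the remainder $\sum_n R_n$ and showing it is $o(a_T)$ in probability. Since $R_n\lesssim (\xi_n-\mu T\ell_n)^2/(T\ell_n)$ and $\xi_n-\mu T\ell_n$ is of order $a_{T\ell_n}=(T\ell_n)^{1/\alpha}l(T\ell_n)$, the $n$-th term is of order $(T\ell_n)^{2/\alpha-1}$; summing against the stick-breaking lengths and using $\E[\ell_n^{2/\alpha-1}]<\infty$ (valid precisely because $\alpha>1$) together with regular-variation/Potter-bound estimates for $l$ should give $\E\sum_n R_n=O(T^{2/\alpha-1})=o(a_T)$, since $a_T$ is of order $T^{1/\alpha}$ and $2/\alpha-1<1/\alpha$ for $\alpha>1$. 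The delicate part is handling the boundary case $\alpha=2$ in part~(a) (where $2/\alpha-1=0$ and logarithmic corrections from the slowly varying $l$ enter, paralleling the $\Theta(T)$ term in Theorem~\ref{thm:maintheorem1}) and, more generally, passing from the increment $\xi_n$ of the Lévy process over a random time $T\ell_n$ to its Gaussian/stable approximation uniformly enough to sum the errors --- this requires a careful truncation of large jumps and an appeal to the domain-of-attraction hypothesis~\eqref{eq:attract} applied along the stick-breaking filtration.
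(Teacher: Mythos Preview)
Your Taylor expansion of $\sqrt{t_n^2+\xi_n^2}$ around slope $\mu$ and the remainder estimate $\sum_n R_n = o(a_T)$ are exactly the paper's argument for the length coordinate in part~(a), including the split $\alpha\in(1,2)$ versus $\alpha=2$ and the use of Potter bounds together with a moment bound of the form $\E[|X_t-\mu t|^{2p}]\le C_{2p}a_t^{2p}$ (Lemma~\ref{lem:stable_mom}). Two points, however, need correction.

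\textbf{Part~(a), the supremum coordinate.} Your claim that ``every face has positive slope for large $T$, so $\ov C_T^\frown=C_T^\frown(T)$ and $\gamma_T^\frown=T$ eventually'' is false. Even with $\mu>0$, the concave majorant has faces of negative slope on every horizon: by time reversal $\ov X_T-X_T\eqd -\un X_T$, and $-\un X_T$ converges a.s.\ to the \emph{nondegenerate} random variable $-\un X_\infty$, so $\gamma_T^\frown<T$ infinitely often. The paper's argument is that $(\ov X_T-X_T)/a_T\cip 0$ because $-\un X_\infty<\infty$ a.s.; this is what makes the second and third coordinates coincide in the limit, not an eventual equality.

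\textbf{Part~(b), independence.} Here the paper does \emph{not} re-run the stick-breaking Taylor expansion. Instead it splits the length path-wise at the supremum time, $\Upsilon_T^\frown=\Upsilon_{[0,\gamma_T^\frown]}+\Upsilon_{[\gamma_T^\frown,T]}$, observes that the pre-supremum piece converges a.s.\ (hence is $o(a_T)$), and applies Bertoin's splitting theorem~\cite[Thm~2.3]{BertoinSplittingSup}: the post-supremum process is independent of the pre-supremum process, which carries $(\ov X_\infty,\gamma_\infty^\frown)$. The part~(a) analysis is then applied to the post-supremum process conditionally, and the limit law does not depend on the conditioning, yielding independence. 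Your proposed route---conditioning on the first $k$ stick-breaking blocks and claiming $(\ov X_\infty,\gamma_\infty^\frown)$ is ``measurable with respect to a finite number of early faces''---has the dependence backwards: in the stick-breaking picture the sticks with small index $n$ are the \emph{long} ones, and for $\mu<0$ their heights $\xi_n\approx\mu T\ell_n$ are eventually negative, so they contribute nothing to $\ov C_T^\frown=\sum_n\xi_n^+$. The positive contributions come from the \emph{short} sticks (large $n$), and there is no finite prefix that captures them. A stick-breaking proof of the independence may be possible, but not via the argument you sketched; the path-wise splitting at the supremum is the clean way through.
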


Note that the centering function of $\Upsilon_T^\frown$ in Theorem~\ref{thm:Theorem4} equals the length of the graph of the linear function $t\mapsto\mu t$ on $[0,T]$. Moreover, the order of the fluctuations of $\Upsilon_T^\frown$ in this case is different than that in Theorems~\ref{thm:maintheorem1} and~\ref{thm:Theorem2}. Asymptotically, $\Upsilon_T^\frown$ and $C_T^\frown(T)$ are positively correlated when $\mu>0$ and negatively correlated when $\mu<0$. 

When $X$ is in the domain of attraction of an $\alpha$-stable law with $\alpha\in(0,1)$, the tails of $X$ are very heavy. The large jumps of $X$ make its concave majorant thin and tall, implying that the length $\Upsilon_T^\frown$ will be well approximated by the extremes of $X$. 
Define $\underbar{C}_T^\smile\coloneqq \inf_{t \in [0,T]}C_T^\smile(t)$ and let $\gamma_T^\smile$ be the time at which the infimum is attained (see Figure~\ref{fig:ConvexHull}). Denote  $\ov{S}_\alpha(1)\coloneqq \sup_{t\in[0,1]}S_\alpha(t)$, $\un{S}_\alpha(1)\coloneqq \inf_{t\in[0,1]}S_\alpha(t)$ and let 
$\gamma^{\alpha\frown}$ (resp. $\gamma^{\alpha\smile}$) be the time at which $(S_\alpha(t))_{t\in[0,1]}$ attains its supremum (resp. infimum).

\begin{theorem}\label{thm:Theorem3} 
Let $X$ be in the domain of attraction of the $\alpha$-stable law $S_\alpha(1)$ for $\alpha\in(0,1)$. 
Define 
\begin{align*}
&\Lambda_T^1
\coloneqq \left(\frac{\Upsilon_T^{\frown}}{a_T},
    \frac{\ov{C}_T^\frown}{a_T},
    \frac{C_T^\frown(T)}{a_T},
    \frac{\gamma_T^\frown}{T}\right), 
&&\Lambda^1
\coloneqq \left(2\ov{S}_\alpha(1)-S_\alpha(1),
    \ov{S}_\alpha(1),
    S_\alpha(1),
    \gamma^{\alpha\frown}\right),\\
&\Lambda_T^2
\coloneqq \left(\frac{\Upsilon_T^{\smile}}{a_T}, 
    \frac{\underbar{C}_T^\smile}{a_T},
    \frac{C_T^\smile(T)}{a_T},
    \frac{\gamma_T^\smile}{T}\right),
&&\Lambda^2
\coloneqq \left(S_\alpha(1)-2\un{S}_\alpha(t),
    \un{S}_\alpha(t),
    S_\alpha(1),
    \gamma^{\alpha\smile} \right).
\end{align*} 
Then the following joint convergence holds: $\left(\Lambda_T^1,\Lambda_T^2\right)\cid \left(\Lambda^1,\Lambda^2\right)$ as $T\to\infty$.
\end{theorem}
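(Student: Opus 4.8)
The plan is to reduce the statement to a single functional limit theorem together with the elementary bounds in~\eqref{eq:Elementary}. Put $\widetilde X^{T}\coloneqq(X_{Ts}/a_T)_{s\in[0,1]}$, which is a L\'evy process (in $s$) with $\widetilde X^{T}_1=X_T/a_T\cid S_\alpha(1)$ by~\eqref{eq:attract}; this convergence of the time-$1$ marginal upgrades, by a standard functional limit theorem for L\'evy processes (following from convergence of the L\'evy--Khintchine characteristics and a tightness argument, see e.g.~\cite{SatoBookLevy}), to $\widetilde X^{T}\cid S_\alpha$ in $D[0,1]$ with the $J_1$ topology. I would then use the a.s.\ identifications $\ov{C}^\frown_T=\sup_{t\in[0,T]}X_t$, $\un{C}^\smile_T=\inf_{t\in[0,T]}X_t$, $C^\frown_T(T)=C^\smile_T(T)=X_T$, and $\gamma^\frown_T$ (resp.\ $\gamma^\smile_T$) the time of the maximum (resp.\ minimum) of $X$ on $[0,T]$, so that the six ``extremal'' coordinates in $(\Lambda_T^1,\Lambda_T^2)$ are exactly $\phi(\widetilde X^{T})$ for the map $\phi\colon D[0,1]\to\R^6$, $\phi(\omega)=(\sup\omega,\inf\omega,\omega(1),\omega(1),\operatorname{argmax}\omega,\operatorname{argmin}\omega)$. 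The functionals $\sup$ and $\inf$ are invariant under the time changes defining $J_1$ and hence continuous everywhere on $D[0,1]$; evaluation at the right endpoint is continuous since those time changes fix the endpoints; and $\operatorname{argmax},\operatorname{argmin}$ are $J_1$-continuous at any path possessing a unique maximiser, resp.\ minimiser. A strictly $\alpha$-stable process with $\alpha\in(0,1)$ has infinite jump activity on $[0,1]$, hence is constant on no subinterval, so it attains its supremum and its infimum at a.s.\ unique times; therefore $\phi$ is continuous at $\p_{S_\alpha}$-a.e.\ path, and the continuous mapping theorem yields the joint convergence of these six coordinates to $\phi(S_\alpha)=(\ov{S}_\alpha(1),\un{S}_\alpha(1),S_\alpha(1),S_\alpha(1),\gamma^{\alpha\frown},\gamma^{\alpha\smile})$.

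For the two length coordinates I would observe that the graph of $C^\frown_T$ is an increasing arc from $(0,0)$ to $(\gamma^\frown_T,\ov{C}^\frown_T)$ followed by a decreasing arc to $(T,C^\frown_T(T))$, and each monotone arc has length at least its vertical extent and at most the sum of its horizontal and vertical extents, so
\begin{equation*}
2\ov{C}^\frown_T-C^\frown_T(T)\ \le\ \Upsilon_T^{\frown}\ \le\ T+2\ov{C}^\frown_T-C^\frown_T(T),
\end{equation*}
the right-hand bound being~\eqref{eq:Elementary}; applying the same inequalities to $-X$ gives $C^\smile_T(T)-2\un{C}^\smile_T\le\Upsilon_T^{\smile}\le T+C^\smile_T(T)-2\un{C}^\smile_T$. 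Since $\alpha\in(0,1)$ and $a_T=T^{1/\alpha}l(T)$ with $l$ slowly varying, $T/a_T=T^{1-1/\alpha}/l(T)\to0$, so $\Upsilon_T^{\frown}/a_T$ differs from the continuous functional $2\sup\widetilde X^{T}-\widetilde X^{T}(1)=(2\ov{C}^\frown_T-C^\frown_T(T))/a_T$ of $\widetilde X^{T}$ by a quantity lying in $[0,T/a_T]$, which vanishes deterministically; hence $\Upsilon_T^{\frown}/a_T$ may be replaced by that functional in the joint vector, giving $\Upsilon_T^{\frown}/a_T\cid 2\ov{S}_\alpha(1)-S_\alpha(1)$ jointly with the previous six coordinates, and symmetrically $\Upsilon_T^{\smile}/a_T\cid S_\alpha(1)-2\un{S}_\alpha(1)$. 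Assembling the eight limits gives $(\Lambda_T^1,\Lambda_T^2)\cid(\Lambda^1,\Lambda^2)$.

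The step needing most care is the two time coordinates $\gamma^\frown_T/T$, $\gamma^\smile_T/T$: one must check (i) the a.s.\ uniqueness of the argmax and argmin of $S_\alpha$ (via the infinite-activity property above, which separates the stable limit from a compound Poisson path, whose extremes can be attained on an entire interval); (ii) the $J_1$-continuity of $\operatorname{argmax}$, $\operatorname{argmin}$ at such paths, which I would get by composing with the time changes realising $J_1$-convergence and invoking uniqueness of the limiting maximiser (an argmax continuous mapping argument); and (iii) that when the prelimit $X$ is of compound Poisson type and happens to be constant on an interval containing its extreme, the tie-breaking convention used to define $\gamma^\frown_T,\gamma^\smile_T$ is asymptotically irrelevant, two conventions differing by that interval's length, which is $O_p(1)=o_p(T)$. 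Everything else reduces to the functional limit theorem and the geometric bounds; in contrast to Theorems~\ref{thm:maintheorem1}--\ref{thm:Theorem4}, no refined use of the stick-breaking representation is needed here, because for $\alpha\in(0,1)$ the faces of the majorant are asymptotically vertical and $\Upsilon_T^{\frown}$ is, up to the negligible term $T$, simply the total vertical variation $2\ov{C}^\frown_T-C^\frown_T(T)$ (and likewise for $\Upsilon_T^{\smile}$).
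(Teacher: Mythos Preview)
Your proposal is correct and follows essentially the same route as the paper's proof: both invoke the functional convergence $\widetilde X^T\cid S_\alpha$ in $J_1$, apply the continuous mapping theorem to the extremal functionals (using a.s.\ uniqueness of the argmax/argmin of the stable limit), and then sandwich $\Upsilon_T^\frown/a_T$ between $(2\ov{C}_T^\frown-C_T^\frown(T))/a_T$ and that quantity plus $T/a_T\to 0$. Your discussion of $J_1$-continuity of $\operatorname{argmax}$ and the tie-breaking convention is slightly more explicit than the paper's (which simply cites~\cite[Lem.~14.12]{MR1876169} and~\cite[Thm~2]{MR2978134}), but the argument is otherwise the same.
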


The L\'evy process $X$ in Theorem~\ref{thm:Theorem3} has a thin and tall concave majorant, so the asymptotic centering by $T$, present in Theorems~\ref{thm:maintheorem1} and~\ref{thm:Theorem2}, is no longer required. Moreover, note that in Theorems~\ref{thm:maintheorem1} and~\ref{thm:Theorem2} the fluctuations of $\Upsilon_T^\frown$ about this centering were significantly smaller than $T$, which is no longer the case here. The proof of Theorem~\ref{thm:Theorem3} in Section~\ref{subsec:proof_alpha_less_than_1} below is based on and approximation of $C_T^\frown$ by simpler geometric figures such as the ones in Figure~\ref{fig:FacesCM}.




The concave majorant lies between two natural geometric 
figures. Under the concave majorant lies the `hut' 
$C^\wedge_T$, defined as the linear path connecting the 
vertices: $(0,0)$, $(\gamma_T^\frown,\ov{X}_T)$ and 
$(T,X_T)$, where 
$\gamma_T^\frown=\arg\inf\{t>0:X_t\vee X_{t-}=\ov{X}_T\}$ 
is the time $X$ attains its supremum on $[0,T]$. Over the 
concave majorant lies the `tent' $C^\sqcap_T$, 
defined as the linear path connecting the vertices: 
$(0,0)$, $(0,\ov{X}_T)$, $(T,\ov{X}_T)$ and $(T,X_T)$. 

\begin{figure}[ht]
\begin{tikzpicture}
\begin{axis} 
	[
	ymin=-1,
	ymax=1.5,
	xmin=1,
	xmax=400,
	xlabel={$T$},
	width=10cm,
	height=5.5cm,
	axis on top=true,
	xticklabels=none,
	yticklabels=none,
	axis x line=none, 
	axis y line=none,
	xlabel style={at={(1,.025)},anchor=south east}
	]			
	
	\node[circle, , fill=none, scale=0.1, label=below:{{\color{gray}$X_t$}}] at (200,.25) {}{};
	
	\node[circle, , fill=none, scale=0.1, label=below:{$C^\frown_T$}] at (35,1.25) {}{};
	
	\node[circle, , fill=none, scale=0.1, label=below:{{\color{red}$C^\wedge_T$}}] at (270,.9) {}{};
	
	\node[circle, , fill=none, scale=0.1, label=below:{{\color{blue}$C^\sqcap_T$}}] at (380,1.25) {}{};
	
	\addplot[
		color = gray,
		]
	coordinates {
(1,0.0)(2,-0.0159)(3,0.0898)(4,0.0815)(5,0.285)(6,0.2)(7,0.123)(8,-0.0293)(9,-0.00265)(10,0.0606)(11,0.0916)(12,0.28)(13,0.422)(14,0.276)(15,0.417)(16,0.418)(17,0.238)(18,0.259)(19,0.217)(20,0.132)(21,0.151)(22,0.189)(23,0.45)(24,0.277)(25,0.174)(26,0.269)(27,0.283)(28,0.449)(29,0.286)(30,0.267)(31,0.121)(32,0.188)(33,0.154)(34,0.314)(35,0.312)(36,0.435)(37,0.341)(38,0.533)(39,0.478)(40,0.54)(41,0.611)(42,0.652)(43,0.833)(44,0.897)(45,0.814)(46,0.782)(47,0.902)(48,0.88)(49,0.857)(50,0.829)(51,0.89)(52,0.929)(53,0.739)(54,0.766)(55,0.748)(56,0.637)(57,0.745)(58,0.756)(59,0.568)(60,0.399)(61,0.346)(62,0.15)(63,0.186)(64,0.146)(65,0.0877)(66,0.131)(67,0.157)(68,0.164)(69,0.171)(70,0.151)(71,0.22)(72,0.262)(73,0.166)(74,0.271)(75,0.342)(76,0.469)(77,0.613)
};
    \addplot[
		color = gray,
		]
	coordinates {
(78,0.909)(79,1.11)(80,1.06)(81,1.05)(82,1.08)(83,0.958)(84,1.01)(85,0.932)(86,1.05)(87,1.16)(88,1.27)(89,1.23)(90,1.12)(91,1.05)(92,1.0)(93,0.968)(94,0.878)(95,1.07)(96,1.16)(97,1.23)(98,1.32)(99,1.26)(100,1.17)(101,1.17)(102,1.07)(103,1.12)(104,0.965)(105,1.06)(106,1.19)(107,1.11)(108,0.997)(109,1.21)(110,1.06)(111,0.947)(112,0.792)(113,0.852)(114,0.678)(115,0.592)(116,0.54)(117,0.485)(118,0.591)(119,0.481)(120,0.529)(121,0.618)(122,0.59)(123,0.48)(124,0.462)(125,0.697)(126,0.664)(127,0.656)(128,0.556)(129,0.796)(130,0.766)(131,0.698)(132,0.796)(133,0.79)(134,0.767)(135,0.849)(136,0.907)(137,0.854)(138,0.803)(139,0.742)(140,0.669)(141,0.635)(142,0.737)(143,0.846)(144,0.804)(145,0.77)(146,0.71)(147,0.603)(148,0.546)(149,0.629)(150,0.729)(151,0.723)(152,0.602)(153,0.825)(154,0.779)(155,0.74)(156,0.78)(157,0.694)(158,0.612)(159,0.632)(160,0.578)(161,0.647)(162,0.717)(163,0.649)(164,0.599)(165,0.561)(166,0.794)(167,0.932)(168,0.944)(169,0.9)(170,0.929)(171,0.997)(172,1.03)(173,1.24)(174,1.37)};
    \addplot[
		color = gray,
		]
	coordinates {
(175,1.29)(176,1.12)(177,1.21)(178,1.09)(179,1.05)(180,0.887)(181,1.05)(182,0.992)(183,1.02)(184,1.05)(185,1.11)(186,1.14)(187,1.04)(188,1.09)(189,1.24)(190,1.32)(191,1.27)(192,1.19)(193,1.37)
};
    \addplot[
		color = gray,
		]
	coordinates {
(194,1.06)(195,0.917)(196,0.942)(197,0.858)(198,0.956)(199,0.95)(200,0.977)(201,0.856)(202,0.703)(203,0.57)(204,0.656)(205,0.736)(206,0.587)(207,0.704)(208,0.531)(209,0.473)
};
    \addplot[
		color = gray,
		]
	coordinates {
(210,0.245)(211,0.238)(212,0.116)(213,0.0396)(214,0.109)(215,0.208)(216,0.294)(217,0.128)(218,-0.106)(219,-0.264)(220,-0.3)(221,-0.324)(222,-0.262)(223,-0.26)(224,-0.342)(225,-0.313)(226,-0.409)(227,-0.483)(228,-0.397)(229,-0.44)(230,-0.522)(231,-0.509)(232,-0.517)(233,-0.547)
};
    \addplot[
		color = gray,
		]
	coordinates {
(234,-0.258)(235,-0.308)(236,-0.305)(237,-0.329)(238,-0.359)(239,-0.408)(240,-0.268)(241,-0.423)(242,-0.491)(243,-0.406)(244,-0.346)(245,-0.473)(246,-0.549)(247,-0.576)(248,-0.535)(249,-0.481)(250,-0.446)(251,-0.577)(252,-0.552)(253,-0.492)(254,-0.568)(255,-0.509)(256,-0.655)(257,-0.779)(258,-0.811)(259,-0.822)(260,-0.764)(261,-0.71)(262,-0.889)(263,-0.851)(264,-0.763)(265,-0.796)(266,-0.784)(267,-0.571)(268,-0.661)(269,-0.652)(270,-0.518)(271,-0.435)(272,-0.433)(273,-0.369)(274,-0.165)(275,-0.123)(276,-0.0464)(277,-0.1)(278,0.00201)(279,0.0476)(280,-0.0175)(281,-0.0458)(282,0.0881)(283,0.232)(284,0.0109)(285,-0.0222)(286,-0.0835)(287,-0.0373)(288,-0.0549)(289,-0.0873)(290,-0.159)(291,-0.0652)(292,-0.0394)(293,-0.183)(294,-0.0184)(295,-0.0474)(296,-0.0932)(297,-0.0382)(298,0.0574)(299,-0.0186)(300,0.0919)(301,-0.0302)(302,-0.054)(303,-0.0175)(304,-0.014)(305,0.019)(306,0.207)(307,0.173)(308,0.241)(309,0.18)(310,0.236)(311,0.235)(312,0.321)(313,0.345)(314,0.379)(315,0.391)(316,0.278)(317,0.318)(318,0.432)(319,0.442)(320,0.473)(321,0.376)(322,0.332)(323,0.238)(324,0.284)(325,0.113)(326,0.121)(327,0.185)(328,0.22)(329,0.0886)(330,0.156)(331,0.182)(332,0.0961)(333,0.0704)(334,-0.0415)(335,-0.0266)(336,0.0334)(337,0.131)(338,0.185)(339,0.261)(340,0.292)(341,0.326)(342,0.138)(343,0.0405)(344,0.105)(345,0.0593)(346,0.12)(347,0.322)(348,0.174)(349,0.264)(350,0.208)(351,0.157)(352,-0.0806)(353,-0.0832)(354,-0.053)(355,-0.0729)(356,-0.238)(357,-0.22)(358,-0.0487)(359,-0.182)(360,-0.182)(361,-0.119)(362,-0.156)(363,-0.127)(364,-0.107)(365,-0.148)(366,-0.308)(367,-0.27)(368,-0.166)(369,-0.0305)(370,-0.0877)(371,-0.0824)(372,-0.185)(373,-0.253)(374,-0.326)(375,-0.391)(376,-0.373)(377,-0.275)(378,-0.318)(379,-0.331)(380,-0.37)(381,-0.336)(382,-0.221)(383,-0.25)(384,-0.247)(385,-0.253)(386,-0.132)(387,-0.126)(388,-0.204)(389,-0.182)(390,0.047)(391,0.0242)(392,-0.0796)
};

    \addplot[
		color = gray,
		]
	coordinates {
(393,0.312)(394,0.229)(395,0.197)(396,0.0164)(397,0.0766)(398,0.0459)(399,0.0201)(400,0.143)
};

    \addplot[
		color = black,
		]
	coordinates {
(1,0.0)(5,0.285)(13,0.422)(44,0.897)(88,1.27)(98,1.32)(174,1.37)(193,1.37)(393,0.312)(400,0.143)
};

    \addplot[
		color = red,
		]
	coordinates {
(1,0)(193,1.37)(400,0.143)
};

    \addplot[
		color = blue,
		]
	coordinates {
(1,0)(1,1.37)(400,1.37)(400,0.143)
};

\end{axis}
\end{tikzpicture}
\caption{\label{fig:FacesCM} 
The figure shows a sample of the path of $X$, 
the concave majorant $C_T^\frown$, 
the hut $C_T^\wedge$ and the tent $C_T^\sqcap$.}{}
\end{figure}
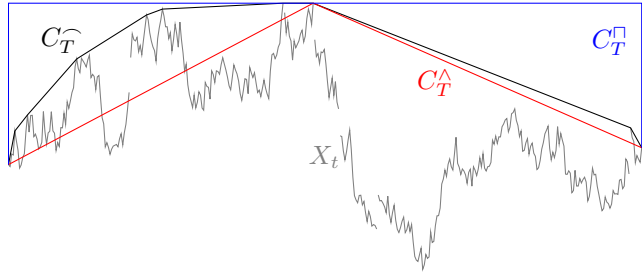

Suppose that the lengths of the hut $C^\wedge_T$ and the tent $C^\sqcap_T$ are $\Upsilon^\wedge_T$ and $\Upsilon^\sqcap_T$, respectively. It is clear from the triangle inequality that $\Upsilon^\wedge_T\le\Upsilon_T^\frown\le\Upsilon^\sqcap_T$. These lengths do not generally all have the same asymptotic behaviour. The next result provides a short comparison in the cases $\alpha\in(1,2]$ with $\E[X_1]=0$ and $\alpha\in(0,1)$.

\begin{proposition}
\label{prop:compare_length}
Define $\Upsilon^\wedge_T$ and  
$\Upsilon^\sqcap_T$ as before then the following statements hold as $T\to\infty$.\\
{\nf{(a)}} Suppose $\E[X_1]=0$ and $\sigma^2=\E[X_1^2]<\infty$, then 
\begin{multline*}
\bigg(\Upsilon^\wedge_T-T,
\frac{1}{\sqrt{\log T}}
\left(\Upsilon_T^{\frown}
    -T-\frac{\sigma^2}{2}\log T
    +\Theta(T)\right),\frac{1}{\sqrt{T}}\big(\Upsilon^\sqcap_T-T\big)\bigg)\\
\cid 
\bigg(
\frac{\sigma^2}{2}\bigg(\frac{\ov B_1^2}{\rho}
    +\frac{(\ov B_1-B_1)^2}{1-\rho}\bigg),
\frac{\sqrt{3}}{2}\sigma^2Z
,\sigma(2\ov B_1-B_1)\bigg),
\end{multline*}
where $Z$ is a standard normal variable independent of the standard Brownian motion $B=(B_t)_{t\ge 0}$, $\ov B_1=\sup_{t\in[0,1]}B_t$ and $\rho\in[0,1]$ is the a.s. unique time such that $B_\rho=\ov B_1$.\\
{\nf{(b)}} Suppose the limit in~\eqref{eq:attract} holds for some $\alpha\in(1,2)$ and $\E[X_1]=0$, then 
\begin{align*}
&\bigg(\frac{T}{a_T^2}\big(\Upsilon^\wedge_T-T\big),
\frac{T}{a_T^2}\big(\Upsilon^\frown_T-T\big)
,\frac{1}{a_T}\big(\Upsilon^\sqcap_T-T\big)\bigg)\\
&\qquad\cid 
\bigg(
\frac{1}{2}\bigg(\sum_{n=1}^\infty 
    \ell_n^{1/\alpha}\big(S_\alpha^{(n)}\big)^+\bigg)^2
+\frac{1}{2}\bigg(\sum_{n=1}^\infty
    \ell_n^{1/\alpha}\big(S_\alpha^{(n)}\big)^-\bigg)^2
,\frac{1}{2}\sum_{n=1}^\infty 
    \ell_n^{2/\alpha-1}\big(S_\alpha^{(n)}\big)^2
,\sum_{n=1}^\infty 
    \ell_n^{1/\alpha}\big|S_\alpha^{(n)}\big|\bigg),
\end{align*}
where $(\ell_n)_{n\in\N}$ is a uniform stick-breaking process independent of the sequence $(S_\alpha^{(n)})_{n\in \N}$ of independent copies of $S_\alpha(1)$.\\
{\nf{(c)}} Suppose the limit in~\eqref{eq:attract} holds for some $\alpha\in(0,1)$, then 
\[
\bigg(\frac{\Upsilon^\wedge_T}{a_T},
    \frac{\Upsilon^\frown_T}{a_T},
    \frac{\Upsilon^\sqcap_T}{a_T}\bigg)
    \cid (2\ov{S}_\alpha(1)-S_\alpha(1))(1,1,1).
\]
\end{proposition}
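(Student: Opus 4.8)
The plan is to express the tent length exactly, and the hut length up to a negligible error, as continuous functions of the triple $(\gamma^\frown_T,\ov{C}^\frown_T,C^\frown_T(T))$, and then to read off all three limits --- together with that of $\Upsilon^\frown_T$ --- from the joint weak convergences already established in Corollary~\ref{cor:2nd-moment} (in the joint form given in part~(ii) of the remark following it), Theorem~\ref{thm:Theorem2} and Theorem~\ref{thm:Theorem3}, invoking the continuous mapping theorem and Slutsky's lemma. The tent is immediate: reading off its vertices $(0,0)$, $(0,\ov{C}^\frown_T)$, $(T,\ov{C}^\frown_T)$, $(T,C^\frown_T(T))$ and using $0\le C^\frown_T(T)\le\ov{C}^\frown_T$ gives the exact identity $\Upsilon^\sqcap_T=T+2\ov{C}^\frown_T-C^\frown_T(T)$, so $\Upsilon^\sqcap_T-T$ is a fixed linear functional of $(\ov{C}^\frown_T,C^\frown_T(T))$ whose normalised limit follows from the relevant coordinates of the three limit vectors (using the identity $2x^{+}-x=|x|$ to recast the answer in case~(b), and directly in case~(c)).

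For the hut I would start from $\Upsilon^\wedge_T=\sqrt{(\gamma^\frown_T)^2+(\ov{C}^\frown_T)^2}+\sqrt{(T-\gamma^\frown_T)^2+(\ov{C}^\frown_T-C^\frown_T(T))^2}$ and the elementary bound $0\le A+\tfrac{B^2}{2A}-\sqrt{A^2+B^2}\le\tfrac{B^4}{4A^3}$, valid for $A>0$. In cases~(a) and~(b) the horizontal runs $\gamma^\frown_T$ and $T-\gamma^\frown_T$ are of order $T$, since $\gamma^\frown_T/T$ converges in law to $\rho$, respectively to $\sum_{n}\ell_n\1_{\{S_\alpha^{(n)}>0\}}$, each taking values in the open interval $(0,1)$ almost surely, whereas the vertical drops $\ov{C}^\frown_T$ and $\ov{C}^\frown_T-C^\frown_T(T)$ have the strictly smaller order $\sqrt T$, resp.\ $a_T=\oh(T)$. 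Hence $\Upsilon^\wedge_T-T$ equals the sum of the two curvature corrections $\tfrac{(\ov{C}^\frown_T)^2}{2\gamma^\frown_T}$ and $\tfrac{(\ov{C}^\frown_T-C^\frown_T(T))^2}{2(T-\gamma^\frown_T)}$ up to a remainder that, after the relevant normalisation, tends to zero in probability, and the continuous mapping theorem applied to the joint limit of $(\gamma^\frown_T/T,\ov{C}^\frown_T/b_T,C^\frown_T(T)/b_T)$ --- with $b_T=\sqrt T$ in~(a) and $b_T=a_T$ in~(b) --- identifies the limit of the normalised hut length. In case~(c) the roles are reversed, $T=\oh(a_T)$: the vertical drop dominates each segment, the deterministic estimate $0\le\sqrt{A^2+B^2}-|B|\le A\le T$ gives $\Upsilon^\wedge_T/a_T=(2\ov{C}^\frown_T-C^\frown_T(T))/a_T+\oh_\p(1)$, and the hut shares the tent's limit $2\ov{S}_\alpha(1)-S_\alpha(1)$.

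It remains to handle $\Upsilon^\frown_T$ and assemble the joint convergence. In cases~(a) and~(b) the normalised limit of $\Upsilon^\frown_T$ is exactly Corollary~\ref{cor:2nd-moment}, resp.\ Theorem~\ref{thm:Theorem2}, and since these come as one coordinate of a single joint limit with $(\gamma^\frown_T/T,\ov{C}^\frown_T/b_T,C^\frown_T(T)/b_T)$, the whole vector of the proposition converges jointly by the continuous mapping theorem together with Slutsky's lemma (the latter absorbing the hut remainder); the asymptotic independence of the length fluctuation from the Brownian functionals in~(a) is inherited from Theorem~\ref{thm:maintheorem1}, after rewriting the Gaussian coordinate $\tfrac{\sigma^2}{\sqrt2}Z_1+\tfrac{\sigma^2}{2}Z_2\sim N(0,\tfrac34\sigma^4)$ as $\tfrac{\sqrt3}{2}\sigma^2Z$ with $Z$ standard normal and independent of $B$. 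In case~(c) I would instead use the elementary sandwich $\Upsilon^\wedge_T\le\Upsilon^\frown_T\le\Upsilon^\sqcap_T$ and the fact that $\Upsilon^\sqcap_T/a_T-\Upsilon^\wedge_T/a_T\cip0$ to squeeze $\Upsilon^\frown_T/a_T$ onto the common limit, so that the triple equals $(\Upsilon^\wedge_T/a_T)(1,1,1)+\oh_\p(1)$ and joint convergence is automatic.

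The main obstacle is the remainder control in the hut expansion in cases~(a) and~(b): one must show that the normalised remainder --- of the form (deterministic factor)$\times(\ov{C}^\frown_T/b_T)^4(\gamma^\frown_T/T)^{-3}$ for the first segment, and of a symmetric form for the second --- tends to zero in probability. I would write it as the product of a tight quantity (a polynomial in $\ov{C}^\frown_T/b_T$, $C^\frown_T(T)/b_T$, $(\gamma^\frown_T/T)^{-1}$ and $(1-\gamma^\frown_T/T)^{-1}$, which converges in law by the continuous mapping theorem precisely because the limit of $\gamma^\frown_T/T$ almost surely avoids the endpoints $\{0,1\}$ at which these reciprocals are singular) and a deterministic factor tending to zero (namely $b_T^2/T\to0$, since $b_T=\oh(T)$), and conclude by Slutsky. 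The same almost sure avoidance of $\{0,1\}$ is also exactly what renders continuous, on a set of full mass under the relevant limit law, the maps $(x,y)\mapsto x^2/y$ and their analogues used throughout the argument.
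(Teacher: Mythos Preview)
Your approach is essentially identical to the paper's: exact formula for the tent, second-order Taylor expansion for the hut with an $O(B^4/A^3)$ remainder, then the continuous mapping theorem and Slutsky applied to the joint limits from Theorems~\ref{thm:maintheorem1}, \ref{thm:Theorem2} and~\ref{thm:Theorem3}; the sandwich in case~(c) is also exactly how the paper argues. One algebraic slip to fix: after normalising the hut by $T/b_T^2$ (which equals $1$ in~(a) with $b_T=\sqrt{T}$), the deterministic prefactor in the remainder works out to $(b_T/T)^2$, not $b_T^2/T$ --- your justification ``since $b_T=\oh(T)$'' does not imply $b_T^2/T\to 0$ (take $b_T=T^{3/4}$), but it does imply $(b_T/T)^2\to 0$, which is what you actually need. (Also, the inequality $0\le C^\frown_T(T)$ you invoke for the tent is neither true in general nor needed; only $\ov{C}^\frown_T\ge 0$ and $\ov{C}^\frown_T\ge C^\frown_T(T)$ are used.)
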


Under the assumptions of either Theorem~\ref{thm:maintheorem1} or Theorem~\ref{thm:Theorem2}, the centering functions of $\Upsilon_T^\wedge$, $\Upsilon_T^\frown$ and $\Upsilon_T^\sqcap$ 
in Proposition~\ref{prop:compare_length}  
are of the form $T+o(T)$ as $T\to\infty$. However, even though the three statistics are closely related, the order of their fluctuations, measured via the scaling functions, exhibits a wide variety of behaviours, see Table~\ref{tab:compare_length} below. In the case $\alpha\in(0,1)$, the centering functions are all zero and the corresponding scaling functions coincide with the scale of the process. 

\begin{table}[ht]
\begin{tabular}{|c|c|c|c|c|c|} 
	\hline
	Setting & Scaling of $X_T$
	& $\Upsilon_T^\wedge$ & $\Upsilon_T^\frown$ 
	& $\Upsilon_T^\sqcap$\\
	\hline
	Theorem~\ref{thm:maintheorem1}
	$(\E [X_1^2]<\infty)$
	& $a_T=\sqrt{T}$
	& $1$ 
	& $\sqrt{\log T}$
	& $\sqrt{T}$\\
	\hline
	Theorem~\ref{thm:Theorem2} 
    $(1<\alpha<2)$
	& $a_T=T^{1/\alpha}l(T)$
	& $T^{2/\alpha-1}l(T)^2$ 
	& $T^{2/\alpha-1}l(T)^2$
	& $T^{1/\alpha}l(T)$\\
	\hline 
    Theorem~\ref{thm:Theorem3}
    $(0<\alpha<1)$
	& $a_T=T^{1/\alpha}l(T)$
    & $T^{1/\alpha}l(T)$
    & $T^{1/\alpha}l(T)$
    & $T^{1/\alpha}l(T)$\\
	\hline
\end{tabular}
\caption{The table shows the scaling functions (after centering) in the weak limits of the lengths 
$\Upsilon^\wedge_T$, $\Upsilon^\frown_T$ and 
$\Upsilon^\sqcap_T$ under the assumptions of the corresponding theorems with $a_T$ as in~\eqref{eq:attract}.}
\label{tab:compare_length}
\end{table}

Recall that $\Upsilon_T^\wedge\le\Upsilon_T^\frown\le\Upsilon_T^\sqcap$. Interestingly, for $X$ with finite variance, by Proposition~\ref{prop:compare_length}(a) the fluctuations of $\Upsilon_T^\frown$ are asymptotically independent of those of $\Upsilon_T^\wedge$ and $\Upsilon_T^\sqcap$, while the fluctuations of the sandwiching lengths $\Upsilon_T^\wedge$ and $\Upsilon_T^\sqcap$ exhibit a strong asymptotic dependence, both being deterministic functions of the vector $(B_1,\ov B_1,\rho)$.  Proposition~\ref{prop:compare_length}(b)\&(c) states that the dependence of the fluctuations of all three statistics persists in the limit when $\alpha<2$.

\subsection{Overview of the proofs}
\label{sec:roadmap}

Our starting point is~\cite[Thm~12]{CM_Fluctuation_Levy}, which implies the following crucial identity for any L\'evy process and time horizon $T>0$:
\begin{equation}
\label{eq:SB-representation}
(\Upsilon_T^\frown,
H_T',
C_T^\frown(T),
\ov{C}_T^\frown,
\gamma_T^\frown)
\eqd \sum_{n=1}^\infty
\Big(\sqrt{(T\ell_n)^2+\xi_n^2},
\1_{\{T\ell_n\ge 1\}},
\xi_n,
\xi_n^+,
T\ell_n\1_{\{\xi_n>0\}}\Big),
\end{equation} 
where $\xi_n\coloneqq X_{TL_{n-1}}-X_{TL_n}$, $H'_T$ is a random variable such that $|H_T-H_T'|$ is bounded in $L^1$ as $T\to\infty$ (see Lemma~\ref{lem:face_discrepancy} below for details) and $\ell$ is a uniform stick-breaking process independent of $X$ with stick-remainders $(L_n)_{n\in \N \cup \{0\}}$. 
This identity is essential in all that follows as it reduces the claims in Theorems~\ref{thm:maintheorem1}, \ref{thm:Theorem2} and~\ref{thm:Theorem4} to limit statements for the sum in~\eqref{eq:SB-representation}, which is given in terms of the increments of the L\'evy process over independent stick-breaking lengths. Establishing those limits as time horizon $T\to\infty$ turns out to be a delicate task. 

In the case of finite variance and zero mean, the proof of Theorem~\ref{thm:maintheorem1} requires splitting the weak limits into three asymptotically independent weak limits. The faces of $C_T^\frown$ of length smaller than $1$ do not contribute to the fluctuations of  $(\Upsilon_T^\frown,H'_T)$. However, all faces of $C_T^\frown$ of moderate size  contribute in aggregate to its fluctuations, with any finite set of faces the of moderate size not surviving in the limiting fluctuations. In contrast, \emph{only the largest few} faces of $C_T^\frown$ influence the scaling limit of the vector $(C_T^\frown(T),\ov C_T^\frown,\gamma_T^\frown)$, making its limit independent of the limiting fluctuations of $(\Upsilon_T^\frown,H'_T)$. Moreover, the CLT for $(\Upsilon_T^\frown,H'_T)$ consists of two asymptotically independent weak limits. The first captures the fluctuations \emph{due to} the stick-breaking process while the second  describes the fluctuations \emph{conditional on} a manifestation of the stick-breaking process. 
The remaining work in the proof of Theorem~\ref{thm:maintheorem1} is mostly concerned with establishing weak limits, conditional on the stick-breaking process, and crucially depends  on~\cite[Thm~1.1]{Bang_et_al_Gaussian_approximation}.

In the case of finite first moment and infinite variance, the proofs of Theorems~\ref{thm:Theorem2} and~\ref{thm:Theorem4} split the sum in~\eqref{eq:SB-representation} into two sums according to whether the faces are shorter or longer than one. However, unlike in the finite-variance
zero-mean case, 
here this is just a technical step:  
in the proof of Theorems~\ref{thm:Theorem2}
all the faces of the concave majorant survive in the limit, contributing both to the fluctuations of its length as well as  the remaining statistics of $C_T^\frown$. It follows from the 
proof of Theorem~\ref{thm:Theorem4} that
only the vertical heights of the faces of $C_T^\frown$
in aggregate contribute to the fluctuations of its length, which are determined by the asymptotic behaviour of its final point $C_T^\frown(T)=X_T$ as $T\to\infty$.

In the infinite first moment case, Theorem~\ref{thm:Theorem3} follows by a sandwiching argument involving the weak limits for the lengths $\Upsilon_T^\wedge$ and $\Upsilon_T^\sqcap$ as in Proposition~\ref{prop:compare_length} above.
As in the proof of Theorem~\ref{thm:Theorem4}, only the heights of the faces of $C_T^\frown$ in aggregate contribute to this limit. 

\subsection{Connections with the literature}\label{subsec:lit}
Convex hulls of stochastic processes are of longstanding interest, see e.g.~\cite{PitmanBravoCMLP} and the references therein. Of particular interest are the geometric properties of convex hulls such as the length, area and diameter, see~\cite{RWpaper,MR3896868,randonfurling2020convex,MR3272767,MR3385604} for random walks and~\cite{MR2898714} for isotropic stable process. Concave majorants of one-dimensional L\'evy processes are also of interest in physics.  In the monograph~\cite[Ch.~XI]{MR1739699}, for example, the problem of whether a quantum particle stays within the light cone is analysed using concave majorants of one-dimensional L\'evy processes.


If the L\'evy process is in a domain of attraction of a stable law, one can pose two types of question about the limiting behaviour of its convex hulls. A limit of a geometric quantity (e.g. perimeter) of the convex hull of the original process may be considered \textit{or} the limit of the convex hull of the scaled process may be analysed. Since taking a convex hull of a set is a continuous operation, in the latter case it is natural to expect that the limit will be given in terms of the corresponding geometric quantity of the convex hull of the stable limit, which is what happens in~\cite[Sec.~5]{MR3564216}.  The present paper considers the former type of question for the length of the concave majorant. 
It is clear from Theorems~\ref{thm:maintheorem1} and~\ref{thm:Theorem2} above that 
in this case the asymptotic mean and the scale of the fluctuations around them are of different order than those of the process. Moreover, the limit is not given in terms of the corresponding quantity for the stable process. Differently put, we analyse the statistics describing the geometry of the convex hull of the original process as the time horizon tends to infinity
without scaling the process first and then considering the limiting behaviour of such statistics.

The object of study in~\cite{MR3564216} is the convex hull of the scaled multi-dimensional L\'evy process attracted to an isotropic $\alpha$-stable process.
This confers upon the convex hull a spatial homogeneity not enjoyed by the concave majorant, which is a one dimensional object in space-time that behaves very differently in space and time coordinates. A further difference with problem considered in~\cite{MR3564216} is that  our aim is to understand the fluctuations around the asymptotic centering rather than obtaining the limit, which in our case is straightforward, see~\eqref{eq:Elementary} above.


A related question about the fluctuations of the length of the convex minorant of a random walk, as the time horizon tends to infinity, was studied in the recent paper~\cite{RWpaper}. 
CLT-type results for the length of the convex minorant of a random walk ware established in~\cite{RWpaper} under hypothesis analogous to ours (i.e. the increments either have finite variance and zero mean or are in
the domain of attraction of an $\alpha$-stable law for $\alpha \in (0,2)\setminus\{1\}$). The joint limits for the shape statistics in random walk case are not discussed in~\cite{RWpaper}.
Moreover, we stress that the fluctuations of the length of the concave majorant in 
our Theorems~\ref{thm:maintheorem1}, \ref{thm:Theorem2} and~\ref{thm:Theorem4}
cannot be deduced easily from
the results of~\cite{RWpaper} even in the case of a compound Poisson process since the random
time-change connecting it with a random walk  distorts the concave majorant, see Figure~\ref{fig:RWvsCP} below.

\begin{figure}[ht]
\begin{tikzpicture}
\begin{axis} 
	[
	ymin=-2,
	ymax=6,
	xmin=-20,
	xmax=420,
	xlabel={$T$},
	title={Random walk $S_n$},
	width=9cm,
	height=6.5cm,
	axis on top=true,
	xticklabels=none,
	yticklabels=none,
	axis x line=none, 
	axis y line=none,
	xlabel style={at={(1,.025)},anchor=south east}
	]

	\addplot[
		color = gray,
		mark options={scale=.5},
		mark = *,
		only marks,
		]
	coordinates {
(1,0.0)(17,0.498)(33,0.231)(49,-0.503)(65,0.865)(81,0.077)(97,1.28)(113,3.39)(129,4.46)(145,4.5)(161,3.66)(177,1.56)(193,2.35)(209,1.12)(225,0.753)(241,1.8)(257,3.13)(273,4.62)(289,5.38)(305,4.2)(321,3.4)(337,5.52)(353,4.79)(369,3.17)(385,3.85)(401,2.87)
	};
    
    \addplot[
		color = blue,
		]
	coordinates {
(1,0.0)(129,4.46)(289,5.38)(337,5.52)(385,3.85)(401,2.87)
};

\end{axis}
\end{tikzpicture}
\hspace{5mm}
\begin{tikzpicture}
\begin{axis} 
	[
	ymin=-2,
	ymax=6,
	xmin=-20,
	xmax=420,
	xlabel={$T$},
	title={Compound Poisson process $X_t=S_{N_t}$},
	width=9cm,
	height=6.5cm,
	axis on top=true,
	xticklabels=none,
	yticklabels=none,
	axis x line=none, 
	axis y line=none,
	xlabel style={at={(1,.025)},anchor=south east}
	]			
	
	
	\addplot[
		color = gray,
		]
	coordinates {
(1,0.0)(2,0.498)(3,0.498)(4,0.498)(5,0.498)(6,0.498)(7,0.498)(8,0.498)(9,0.498)(10,0.498)(11,0.498)(12,0.498)(13,0.498)(14,0.498)(15,0.498)(16,0.498)(17,0.498)(18,0.498)(19,0.498)(20,0.498)(21,0.498)(22,0.498)(23,0.498)(24,0.498)(25,0.498)(26,0.498)(27,0.498)(28,0.498)(29,0.498)(30,0.498)(31,0.498)(32,0.498)(33,0.498)(34,0.498)(35,0.498)(36,0.498)(37,0.498)(38,0.498)(39,0.231)(40,0.231)(41,0.231)(42,0.231)(43,0.231)(44,0.231)(45,0.231)(46,0.231)(47,0.231)(48,0.231)(49,0.231)(50,0.231)(51,0.231)(52,0.231)(53,0.231)(54,0.231)(55,0.231)(56,0.231)(57,0.231)(58,0.231)(59,0.231)(60,0.231)(61,0.231)(62,-0.503)(63,-0.503)(64,-0.503)(65,-0.503)(66,-0.503)(67,-0.503)(68,-0.503)(69,-0.503)(70,-0.503)(71,-0.503)(72,-0.503)(73,-0.503)(74,-0.503)(75,-0.503)(76,-0.503)(77,-0.503)(78,0.865)(79,0.865)(80,0.865)(81,0.865)(82,0.865)(83,0.865)(84,0.865)(85,0.865)(86,0.865)(87,0.865)(88,0.865)(89,0.865)(90,0.865)(91,0.865)(92,0.077)(93,0.077)(94,0.077)(95,0.077)(96,0.077)(97,0.077)(98,0.077)(99,0.077)(100,1.28)(101,1.28)(102,1.28)(103,1.28)(104,1.28)(105,3.39)(106,3.39)(107,3.39)(108,3.39)(109,3.39)(110,3.39)(111,3.39)(112,3.39)(113,3.39)(114,3.39)(115,3.39)(116,3.39)(117,3.39)(118,3.39)(119,3.39)(120,3.39)(121,3.39)(122,3.39)(123,3.39)(124,3.39)(125,4.46)(126,4.46)(127,4.46)(128,4.5)(129,3.66)(130,3.66)(131,3.66)(132,3.66)(133,3.66)(134,3.66)(135,3.66)(136,1.56)(137,1.56)(138,1.56)(139,1.56)(140,1.56)(141,1.56)(142,1.56)(143,1.56)(144,1.56)(145,1.56)(146,2.35)(147,1.12)(148,1.12)(149,1.12)(150,1.12)(151,1.12)(152,0.753)(153,0.753)(154,0.753)(155,0.753)(156,0.753)(157,0.753)(158,0.753)(159,0.753)(160,0.753)(161,0.753)(162,0.753)(163,0.753)(164,0.753)(165,0.753)(166,0.753)(167,0.753)(168,0.753)(169,1.8)(170,1.8)(171,1.8)(172,1.8)(173,1.8)(174,1.8)(175,1.8)(176,1.8)(177,1.8)(178,3.13)(179,3.13)(180,3.13)(181,3.13)(182,3.13)(183,3.13)(184,3.13)(185,3.13)(186,3.13)(187,4.62)(188,4.62)(189,4.62)(190,4.62)(191,4.62)(192,4.62)(193,4.62)(194,4.62)(195,4.62)(196,4.62)(197,4.62)(198,4.62)(199,4.62)(200,4.62)(201,4.62)(202,4.62)(203,4.62)(204,4.62)(205,4.62)(206,4.62)(207,4.62)(208,4.62)(209,4.62)(210,4.62)(211,4.62)(212,4.62)(213,4.62)(214,4.62)(215,4.62)(216,4.62)(217,4.62)(218,4.62)(219,4.62)(220,4.62)(221,4.62)(222,5.38)(223,4.2)(224,4.2)(225,4.2)(226,4.2)(227,4.2)(228,3.4)(229,3.4)(230,3.4)(231,3.4)(232,3.4)(233,3.4)(234,3.4)(235,3.4)(236,3.4)(237,3.4)(238,3.4)(239,3.4)(240,3.4)(241,3.4)(242,3.4)(243,3.4)(244,3.4)(245,3.4)(246,3.4)(247,3.4)(248,3.4)(249,3.4)(250,3.4)(251,3.4)(252,3.4)(253,5.52)(254,5.52)(255,5.52)(256,5.52)(257,5.52)(258,5.52)(259,5.52)(260,5.52)(261,5.52)(262,5.52)(263,5.52)(264,5.52)(265,5.52)(266,5.52)(267,5.52)(268,5.52)(269,5.52)(270,5.52)(271,5.52)(272,5.52)(273,5.52)(274,4.79)(275,4.79)(276,4.79)(277,4.79)(278,4.79)(279,4.79)(280,4.79)(281,4.79)(282,4.79)(283,4.79)(284,4.79)(285,4.79)(286,4.79)(287,4.79)(288,4.79)(289,4.79)(290,4.79)(291,4.79)(292,4.79)(293,4.79)(294,4.79)(295,4.79)(296,4.79)(297,4.79)(298,4.79)(299,4.79)(300,4.79)(301,4.79)(302,4.79)(303,4.79)(304,4.79)(305,4.79)(306,4.79)(307,4.79)(308,4.79)(309,4.79)(310,4.79)(311,4.79)(312,4.79)(313,4.79)(314,4.79)(315,4.79)(316,4.79)(317,4.79)(318,4.79)(319,4.79)(320,4.79)(321,4.79)(322,4.79)(323,4.79)(324,4.79)(325,4.79)(326,4.79)(327,4.79)(328,4.79)(329,4.79)(330,4.79)(331,4.79)(332,4.79)(333,4.79)(334,4.79)(335,4.79)(336,4.79)(337,4.79)(338,4.79)(339,4.79)(340,4.79)(341,4.79)(342,4.79)(343,4.79)(344,4.79)(345,4.79)(346,4.79)(347,4.79)(348,4.79)(349,4.79)(350,4.79)(351,4.79)(352,4.79)(353,4.79)(354,4.79)(355,4.79)(356,4.79)(357,3.17)(358,3.17)(359,3.85)(360,3.85)(361,3.85)(362,3.85)(363,3.85)(364,3.85)(365,3.85)(366,2.87)(367,2.87)(368,2.87)(369,2.87)(370,2.87)(371,2.87)(372,2.87)(373,2.87)(374,2.87)(375,2.87)(376,2.87)(377,2.87)(378,2.87)(379,2.87)(380,2.87)(381,2.87)(382,2.87)(383,2.87)(384,2.87)(385,2.87)(386,2.87)(387,2.87)(388,2.87)(389,2.87)(390,2.87)(391,2.87)(392,2.87)(393,2.87)(394,2.87)(395,2.87)(396,2.87)(397,2.87)(398,2.87)(399,2.87)(400,2.87)(401,2.87)
};
    
    \addplot[
		color = blue,
		]
	coordinates {
(1,0.0)(2,0.498)(125,4.46)(128,4.5)(222,5.38)(253,5.52)(273,5.52)(356,4.79)(401,2.87)
};

\end{axis}
\end{tikzpicture}
\caption{\label{fig:RWvsCP} 
The figure shows a sample of the path of a 
random walk $S_n$ (left) and that of the compound Poisson 
process $X_t=S_{N_t}$ (right), where $N_t$ is a Poisson 
process independent of $S_n$. Note that both processes visit 
the same states and in the same order, but the random 
time-change induced by $N_t$ distorts the shape of the 
concave majorant, since the two concave majorants have a 
different number of faces.}{} 
\end{figure}
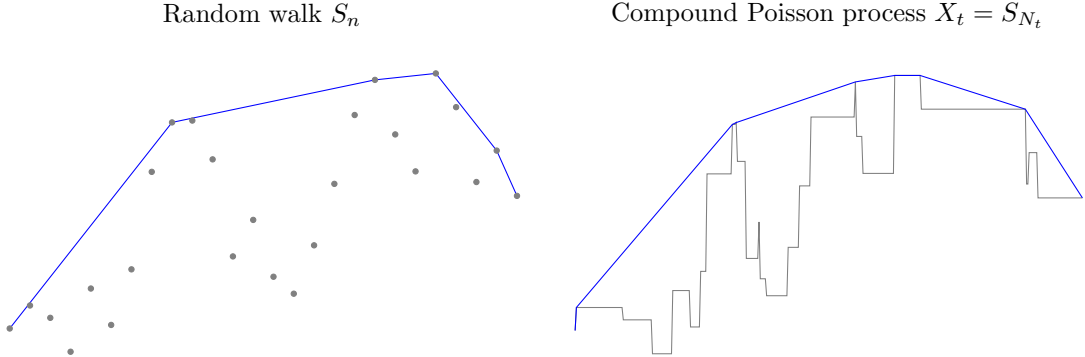

As mentioned in Section~\ref{sec:roadmap} above,
a crucial structure used to  establish our main results is the
characterisation of the law of the concave majorant for all L\'evy processes given in the 
recent article~\cite[Thm~12]{CM_Fluctuation_Levy}. Note that the main result in~\cite{CM_Fluctuation_Levy} generalises to \textit{all} L\'evy processes the characterisation of the law of the concave majorant established in~\cite{MR2978134} 
for diffuse L\'evy processes. This extension is important for the results in the present paper because it allows us to understand the asymptotic shape of the concave majorant of all L\'evy processes, including Poisson processes with drift.

Finally we note that in~\cite[Sec.~28]{SatoBookLevy}, Sato explores the long time behaviour of a L\'evy process and its supremum of the process. Since the concave majorant on $[0,T]$
always coincides with the process at times $T$ and $\gamma_T^\frown$, our results may be viewed as an extension of those in~\cite[Sec.~28]{SatoBookLevy}.




The remainder of the paper is organised as follows. Section~\ref{sec:Proof_Thm1}
proves Theorem~\ref{thm:maintheorem1}. Section~\ref{sec:stable_domain} proves Theorems~\ref{thm:Theorem2}, \ref{thm:Theorem4} and~\ref{thm:Theorem3} as well as the two propositions in the introduction.

\section{Proof of Theorem~\ref{thm:maintheorem1}}
\label{sec:Proof_Thm1}

Recall that $\xi_n= X_{TL_{n-1}}-X_{TL_n}$ and denote $t_n\coloneqq T\ell_n$ for $n\in\N$, where $\ell=(\ell_n)_{n\in\N}$ is a uniform stick-breaking process on $[0,1]$, independent of the L\'evy process $X$, and $L=(L_n)_{n\in\N\cup\{0\}}$ is its stick-remainder process. Note that the sequence $(t_n)_{n\in\N}$ is a uniform stick-breaking process on $[0,T]$. Define the set of indices $\GI_T\coloneqq \{n\in\N:t_n\ge 1\}$. 

The strategy for the proof of Theorem~\ref{thm:maintheorem1} is the following. We will show that the cardinality of $\GI_T$ is by~\cite[Thm~12]{CM_Fluctuation_Levy} closely related to the random variable $H_T$ appearing in Theorem~\ref{thm:maintheorem1} (see Lemma~\ref{lem:face_discrepancy} below for more details). Using this close relationship and~\cite[Thm~12]{CM_Fluctuation_Levy}, we will find that Theorem~\ref{thm:maintheorem1} is equivalent to the vector 
\begin{equation}
\label{eq:quintuple}
\bigg(
\frac{\sum_{n=1}^\infty(\sqrt{\xi_n^2+t_n^2}-t_n)
    -\frac{\sigma^2}{2}|\GI_T|
    +\Theta(T)}{\sqrt{\log T}},
\frac{|\GI_T|-\log T}{\sqrt{\log T}},
\frac{\sum_{n=1}^\infty\xi_n^+}{\sqrt{T}},
\frac{\sum_{n=1}^\infty\xi_n}{\sqrt{T}},
\frac{\sum_{n=1}^\infty t_n\1_{\{\xi_n>0\}}}{T}
\bigg)
\end{equation}
converging weakly to $\zeta\coloneqq(\sigma^2 Z_1/\sqrt{2}, Z_2, \sigma\ov B_1, \sigma B_1, \rho)$ as $T\to\infty$. We next apply 
certain moment estimates for $X$ and limit results for the stick-breaking process~$\ell$ to show that the quintuple in~\eqref{eq:quintuple} converges weakly to $\zeta$ if and only if the following weak limit holds as $T\to\infty$: 
\begin{equation}
\label{eq:CLT-wrt-SB}
\bigg(
\frac{\sum_{n\in\GI_T}(\xi_n^2/t_n-\sigma_{t_n}^2)}
    {2\sqrt{\log T}},
\frac{|\GI_T|-\log T}{\sqrt{\log T}},
\frac{\sum_{n=1}^\infty\xi_n^+}{\sqrt{T}},
\frac{\sum_{n=1}^\infty\xi_n}{\sqrt{T}},
\frac{\sum_{n=1}^\infty t_n\1_{\{\xi_n>0\}}}{T}
\bigg)\cid \zeta,
\end{equation}
where $\sigma_t^2\coloneqq\sigma^2 -\int_{\R\setminus(-\kappa\sqrt{t},\kappa\sqrt{t})} x^2\nu(dx)$ for any $t\ge 1$ and some $\kappa\ge 1$ such that $\sigma_1>0$ (see Proposition~\ref{prop:Upsilon-approx} below for details). Note that the second coordinate in the quintuple in~\eqref{eq:CLT-wrt-SB} is a deterministic function of the stick-breaking process $\ell$ and denote the remaining quadruple by $\zeta'_T$. In order to establish~\eqref{eq:CLT-wrt-SB}, we condition $\zeta'_T$ on $\ell$ and prove that its weak limit under the conditional law is $(\sigma^2Z_1/\sqrt{2}, \sigma\ov B_1, \sigma B_1,\rho)$. Since this limit law does not depend on $\ell$, applying Proposition~\ref{prop:sizeof_n_T} below will complete the proof of Theorem~\ref{thm:maintheorem1}. 

The steps described in this strategy require a variety of technical results. The details of the proof of Theorem~\ref{thm:maintheorem1} are given after the technical results have been established (see page~\pageref{proof:maintheorem1} below). 

 
\subsection{Limit properties of the stick-breaking process}
\label{subsec:SB}
The proof of Theorem~\ref{thm:maintheorem1}
requires a detailed analysis of certain asymptotic properties of the stick-breaking process.
We start with a compensation formula for the point process based on a stick-breaking process, analogous to Campbell's formula for Poisson point processes. 

\begin{lemma}
\label{lem:SB_weak_conv}
Define the point process $\Xi_T\coloneqq \sum_{n\in\N}\delta_{t_n}$, where $\delta_x$ is the Dirac measure at $x$. Then for any measurable function $f:[0,T]\to\R_+$ the following identities hold (with all quantities possibly equal to $+\infty$): 
\begin{equation}
\label{eq:SB_nnt}
\E\bigg[\int_{\R_+} f(x)\Xi_T(dx)\bigg]
=\E\bigg[\sum_{n\in\N}f(t_n)\bigg]
=\int_0^T \frac{f(t)}{t}dt.
\end{equation}
The point process $\Xi_T$ converges weakly as $T\to\infty$ to a Poisson point process $\Xi_\infty$ on $(0,\infty)$ with intensity $t\mapsto t^{-1}$. Moreover, there exists a coupling of point processes $\ov\Xi_\infty$ and $\ov\Xi_T$ for all $T>0$ such that: $\ov\Xi_T\eqd\Xi_T$ and $\ov\Xi_\infty\eqd\Xi_\infty$, $\ov\Xi_T\to\ov\Xi_\infty$ a.s. in the vague topology and for every compact set $A\subset(0,\infty)$, we have $\ov\Xi_T|_A=\ov\Xi_\infty|_A$ for all sufficiently large $T$.
\end{lemma}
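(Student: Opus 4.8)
The plan is to establish the three assertions in turn---the compensation formula, the weak convergence, and the a.s.\ coupling---treating the coupling as the crux.

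\textbf{Compensation formula.} I would argue from the recursive structure of the stick-breaking: conditionally on the first length $t_1=TV_1\sim\U(0,T)$, the remaining lengths $(t_n)_{n\ge2}$ form a uniform stick-breaking process on $[0,T-t_1]$. Writing $\phi(T):=\E[\sum_{n\in\N}f(t_n)]\in[0,\infty]$, this yields the renewal identity $\phi(T)=\tfrac1T\int_0^T f(u)\,du+\tfrac1T\int_0^T\phi(s)\,ds$, and a direct substitution verifies that $\phi(T)=\int_0^T f(t)\,t^{-1}\,dt$ solves it. To see that this is the correct solution I would first take bounded $f$ supported in $[\epsilon,T]$: then $\sum_n f(t_n)$ has a.s.\ finitely many nonzero terms and $\phi(T)<\infty$, so the renewal identity has a unique solution; the general case follows by monotone approximation from below. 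Equivalently, one can use the exponential representation $R_n:=TL_n=Te^{-\Gamma_n}$, where $\Gamma_n:=\sum_{k\le n}\big(-\log(1-V_k)\big)$ are the arrival times of a rate-one Poisson process, so that the stick-remainders $(R_n)_{n\ge1}$ form a Poisson point process on $(0,T)$ with intensity $r^{-1}\,dr$ and $t_n=R_{n-1}-R_n$; summing $\E[f(t_n)]$ over $n$, using that $\Gamma_{n-1}\sim\mathrm{Gamma}(n-1,1)$ is independent of $\Gamma_n-\Gamma_{n-1}\sim\mathrm{Exp}(1)$ and that the partial sums of the Gamma densities telescope to~$1$, gives~\eqref{eq:SB_nnt}. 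I expect this step to be routine.

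\textbf{Weak convergence.} Since the candidate limit $\Xi_\infty$ is a Poisson point process with diffuse intensity $r^{-1}\,dr$, hence simple, by Kallenberg's theorem it suffices to check, for every finite union $A$ of intervals bounded away from $0$, that $\E[\Xi_T(A)]\to\E[\Xi_\infty(A)]$ and $\p(\Xi_T(A)=0)\to\p(\Xi_\infty(A)=0)=\exp(-\int_A r^{-1}\,dr)$. The first is immediate from the compensation formula, as $\E[\Xi_T(A)]=\int_A r^{-1}\,dr$ exactly once $A\subset(0,T)$. The void-probability limit is the substantive point; I would obtain it as a by-product of the coupling below (where $\ov\Xi_T|_A=\ov\Xi_\infty|_A$ for large $T$), though it can also be extracted directly, e.g.\ by analysing the renewal equation for $q(T):=\p(\Xi_T(A)=0)$ or via a Laplace-functional argument based on the Gamma-subordinator representation of the GEM lengths. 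Once the void-probability limit is in hand, the weak convergence and the identification of $\Xi_\infty$ follow.

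\textbf{The coupling (main obstacle).} This is where the real work lies, because the obvious coupling fails. If one fixes a single Poisson point process $\Pi$ with intensity $r^{-1}$ on $(0,\infty)$ and sets $\{R_n^{(T)}\}_{n\ge1}:=\Pi\cap(0,T)$, then every length that equals a gap between two consecutive points of $\Pi$ does stabilise as $T\to\infty$, but the ``top'' length $T-\max\big(\Pi\cap(0,T)\big)$ sweeps, as $T$ ranges over a continuum, through every value in each successive gap of $\Pi$; since all sufficiently high gaps of $\Pi$ exceed any fixed level, this top length re-enters any fixed compact window for arbitrarily large $T$, so neither the a.s.\ vague convergence nor the eventual equality on compacts holds for this coupling. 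The strategy I would follow is instead to build $\ov\Xi_\infty$ directly as a Poisson point process with intensity $r^{-1}$ and, for each $T$, realise $\ov\Xi_T$ so that it \emph{agrees with $\ov\Xi_\infty$ below a level $M_T\uparrow\infty$} while absorbing the deterministic constraint $\sum_n t_n=T$ into lengths of size $\gtrsim M_T$ (a first-passage type cutoff chosen from the partial sums of $\ov\Xi_\infty$, together with a finite correction near the top). The key points to verify are (a) that this construction indeed has the law of the stick-breaking lengths on $[0,T]$---this is where the Gamma-process / Poisson--Dirichlet representation of the GEM lengths enters, and I expect it to be the most delicate part---and (b) that for a fixed compact $A$ only finitely many lengths of $\ov\Xi_T$ ever lie in $A$ and none of the ``exceptional'' (large) lengths does once $T$ is large, for which the first-moment bound from~\eqref{eq:SB_nnt} (e.g.\ $\E[\#\{n:t_n\in A,\ T\ell_n\text{ not a stabilised gap}\}]\to0$) feeds a Borel--Cantelli argument. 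Granting (a)--(b), the equality $\ov\Xi_T|_A=\ov\Xi_\infty|_A$ holds for all large $T$ (once $M_T$ exceeds $\sup A$), the a.s.\ vague convergence and the weak convergence of the previous step follow, and the Poisson (independence) property of $\Xi_\infty$ is read off from the construction.
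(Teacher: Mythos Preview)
Your compensation formula is fine; the paper does essentially your second approach in one line: since $-\log\ell_n$ is $\mathrm{Gamma}(n,1)$-distributed, Fubini gives $\sum_n\E[f(t_n)]=\int_0^\infty f(Te^{-t})\,dt=\int_0^T f(t)\,t^{-1}\,dt$.

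The gap is in the coupling. You correctly identify that the naive coupling via the stick-remainders fails because of the wandering top spacing, but your proposed fix---build $\ov\Xi_\infty$ first and then ``absorb the constraint $\sum_n t_n=T$ into large lengths''---is precisely the hard part, and you leave it unspecified. The stick-breaking law carries the sum constraint, and there is no evident way to realise it from an unconditioned Poisson point process by tampering only with large points while keeping the law exact; the Gamma/Poisson--Dirichlet representation you mention describes normalised jumps and does not by itself produce a coupling across~$T$. As written, your step~(a) is an assertion rather than an argument.

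The paper takes a genuinely different route that sidesteps the constraint by lifting to two dimensions. Let $Y$ be any subordinator with $\E[Y_1]=\infty$, and let $\wt\Xi_T$ be the point process of pairs (horizontal length, vertical height) of the faces of the convex minorant of $Y$ on $[0,T]$. By the stick-breaking representation of the convex minorant, the first-coordinate projection of $\wt\Xi_T$ has exactly the law of $\Xi_T$; and the analogous $\wt\Xi_\infty$ for the minorant on $[0,\infty)$ is Poisson with mean measure $t^{-1}\p(Y_t\in dx)\,dt$, so its projection is Poisson with intensity $t^{-1}$. The coupling is now geometric and automatic: the faces are ordered by slope, and once $T$ exceeds the last time $T_s$ at which the right derivative of the infinite-horizon minorant is below $s$, the two minorants coincide on $[0,T_s]$, hence $\wt\Xi_T$ and $\wt\Xi_\infty$ agree on the wedge $\{(t,x):x/t<s\}$. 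Every compact set in $(0,\infty)^2$ lies in some such wedge, which gives eventual agreement on compacts and a.s.\ vague convergence in one stroke; projecting to the first coordinate yields the claim. The constraint $\sum_n l_n=T$ is satisfied automatically because the faces tile $[0,T]$, and stabilisation happens by \emph{slope} rather than by length---the extra coordinate is exactly the device your construction was missing.
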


The distributional convergence in Lemma~\ref{lem:SB_weak_conv} holds in the vague topology of locally finite measures on $(0,\infty)$, see~\cite[Ch.~16, p.~316]{MR1876169} for definition. More specifically, the a.s. convergence $\ov\Xi_T\to\ov\Xi_\infty$ as $T\to\infty$ in the vague topology is equivalent to $\int f(x)\ov\Xi_T(dx)\to\int f(x)\ov\Xi_\infty(dx)$ for any continuous function $f$ on $(0,\infty)$ that vanishes at $0$ and $\infty$.

\begin{proof}
Note that $-\log\ell_n$ is gamma distributed with density $t\mapsto t^{n-1}e^{-t}/(n-1)!$. Thus, Fubini's theorem implies~\eqref{eq:SB_nnt}: 
\[
\E\left[\sum_{n\in\N}f(t_n)\right]
=\sum_{n\in\N} \int_0^\infty 
    \frac{f(Te^{-t})t^{n-1}}{(n-1)!}e^{-t}dt
=\int_0^\infty f(Te^{-t})dt
=\int_0^T \frac{f(t)}{t}dt.
\]

To prove $\Xi_T\cid\Xi_\infty$ as $T\to\infty$,
 it suffices to provide a coupling 
$(\ov\Xi_T,\ov\Xi_\infty)$ with $\ov\Xi_T\eqd\Xi_T$ and $\ov\Xi_\infty\eqd\Xi_\infty$ such that 
$\ov\Xi_T\to\ov\Xi_\infty$ a.s. as $T\to\infty$.
To that end, 
let $Y$ be a subordinator with infinite mean 
$\E[Y_1]=\infty$ and the convex minorant $C_\infty$ on $\R_+$. By~\cite[Cor.~3]{MR2978134}, for any enumeration of the horizontal lengths $(l_n)_{n\in\N}$ and vertical heights $(h_n)_{n\in\N}$ of the faces of $C_\infty$, the point process $\wt\Xi_\infty\coloneqq\sum_{n\in\N}\delta_{(l_n,h_n)}$ on $(0,\infty)^2$ is Poisson with mean measure $t^{-1}\p(Y_t\in dx)dt$, $(t,x)\in(0,\infty)^2$. Similarly, let 
$\wt\Xi_T$ be the point process of lengths and 
heights of the convex minorant $C_T$ of $Y$ on $[0,T]$. 

For any $s>0$ define the set 
$A_s\coloneqq \{(t,x)\in(0,\infty)^2: x/t<s\}$ and let $T_s$ be 
the last time the right derivative of $C_\infty$ was smaller 
than $s$, which is a.s. finite by~\cite[Cor.~3]{MR2978134}. It follows that $C_T=C_\infty$ on $[0,T_s]$ 
for any $T>T_s$, implying that $\wt\Xi_T$ and 
$\wt\Xi_\infty$ agree on $A_s$ for any $T>T_s$. Since $\bigcup_{s>0}A_s=(0,\infty)^2$ and any compact set in $(0,\infty)^2$ is contained in some $A_s$, we have 
\[
\int_{(0,\infty)^2}f(y)\wt\Xi_T(dy)
=\int_{(0,\infty)^2}f(y)\wt\Xi_\infty(dy),
\qquad T>T_s,
\]
for any compactly supported continuous function $f:(0,\infty)^2\to\R_+$. Since $T_s<\infty$ for all $s>0$, we therefore have $\wt\Xi_T\to\wt\Xi_\infty$ a.s. in the vague topology. Moreover, this implies that the projections
$\ov\Xi_T\coloneqq\wt\Xi_T(\cdot\times\R_+)\eqd\Xi_T$ 
converge to 
$\ov\Xi_\infty\coloneqq\wt\Xi_\infty(\cdot\times\R_+)\eqd\Xi_\infty$ a.s. in the vague topology.
\end{proof}

Recall that $\GI_T=\{n\in\N:t_n\ge 1\}$ is the finite set of indices of sticks in $[0,T]$ of length greater than one and denote by
$\GI_T^\co\coloneqq\N\setminus\GI_T$
its infinite complement.

\begin{corollary}
\label{cor:int_SB_tauT}
{\nf(a)} 
Let $f:\R_+\to\R_+$ be a measurable function
and $T\geq1$. Then the following equalities hold:
\begin{equation}
\label{eq:sum_SB}
\E\sum_{n\in \GI_T}f(t_n)
=\int_1^T \frac{f(t)}{t}dt
\qquad\text{and}\qquad
\E\sum_{n\in \GI_T^\co}f(t_n)
=\int_0^1 \frac{f(t)}{t}dt.
\end{equation}
In particular, the first expectation in~\eqref{eq:sum_SB} always has a (possibly infinite) limit as $T\to\infty$ and for any $q>0$ we have 
$\lim_{T\to\infty}\E\sum_{n\in \GI_T}t_n^{-q}=1/q$. 

{\nf(b)} 
For any bounded and measurable function 
$f:[1,\infty)\to\R$ with $\lim_{t\to\infty}f(t)=0$ 
we have $\E\sum_{n\in \GI_T}f(t_n)=\oh(\log T)$, 
implying that $(\log T)^{-1}\sum_{n\in \GI_T}f(t_n)\ciL 0$.
\end{corollary}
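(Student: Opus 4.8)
The plan is to derive the whole statement from the Campbell-type identity~\eqref{eq:SB_nnt} of Lemma~\ref{lem:SB_weak_conv}. For part~\nf{(a)}, I would note that $\sum_{n\in\GI_T}f(t_n)=\sum_{n\in\N}f(t_n)\1_{\{t_n\ge1\}}$ and that $t\mapsto f(t)\1_{[1,\infty)}(t)$ is a nonnegative measurable function on $[0,T]$; feeding it into~\eqref{eq:SB_nnt} produces the first identity $\E\sum_{n\in\GI_T}f(t_n)=\int_1^T f(t)t^{-1}\,dt$ (with both sides possibly $+\infty$), and feeding in $t\mapsto f(t)\1_{[0,1)}(t)$ produces the complementary one. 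Since $f\ge0$, the map $T\mapsto\int_1^T f(t)t^{-1}\,dt$ is nondecreasing, hence converges in $[0,\infty]$; and the choice $f(t)=t^{-q}$ gives $\int_1^T t^{-q-1}\,dt=q^{-1}(1-T^{-q})\to q^{-1}$ as $T\to\infty$.

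For part~\nf{(b)}, the first thing to do is promote part~\nf{(a)} to bounded signed $f$: writing $f=f^+-f^-$ with $f^\pm$ bounded and nonnegative, each $\E\sum_{n\in\GI_T}f^\pm(t_n)$ is finite, being at most $\|f\|_\infty\,\E|\GI_T|=\|f\|_\infty\log T$, where $\E|\GI_T|=\log T$ follows from~\eqref{eq:sum_SB} applied to $f\equiv1$; subtracting the two identities gives $\E\sum_{n\in\GI_T}f(t_n)=\int_1^T f(t)t^{-1}\,dt$. It then remains to check the elementary estimate $\int_1^T f(t)t^{-1}\,dt=\oh(\log T)$, which is the usual Ces\`aro-type argument: given $\epsilon>0$, choose $M$ with $|f(t)|<\epsilon$ for $t\ge M$, so that $\bigl|\int_1^T f(t)t^{-1}\,dt\bigr|\le\|f\|_\infty\log M+\epsilon\log T$ for $T>M$, whence $\limsup_{T\to\infty}(\log T)^{-1}\bigl|\int_1^T f(t)t^{-1}\,dt\bigr|\le\epsilon$, and letting $\epsilon\downarrow0$ finishes it.

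Finally, for the $L^1$ assertion I would bound $\E\bigl|(\log T)^{-1}\sum_{n\in\GI_T}f(t_n)\bigr|\le(\log T)^{-1}\E\sum_{n\in\GI_T}|f(t_n)|=(\log T)^{-1}\int_1^T|f(t)|t^{-1}\,dt$ using the signed-$f$ identity applied to $|f|$, and then apply the same Ces\`aro estimate to the bounded nonnegative function $|f|$ (which still vanishes at infinity) to conclude the right-hand side tends to $0$. There is no genuine obstacle in this corollary; the only points needing a little care are that~\eqref{eq:SB_nnt} must be applied to the truncated functions $f\1_{[1,\infty)}$ and $f\1_{[0,1)}$ rather than to $f$ directly, and that the passage to signed $f$ in~\nf{(b)} relies on the finiteness $\E|\GI_T|=\log T<\infty$, which also drives the $L^1$ bound.
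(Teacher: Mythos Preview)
Your proof is correct and follows essentially the same route as the paper: part~(a) is identical (apply~\eqref{eq:SB_nnt} to the truncated functions), and for part~(b) the paper rewrites $(\log T)^{-1}\int_1^T f(t)t^{-1}\,dt$ as $\E[f(Z_T)]$ for an auxiliary variable $Z_T$ with density $t\mapsto(t\log T)^{-1}$ on $[1,T]$ and then invokes the dominated convergence theorem, which is just your Ces\`aro argument in probabilistic dress. Your handling of the signed extension and of the $L^1$ claim is in fact more explicit than the paper's.
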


\begin{proof}
(a) Note that $f(t_n)\1_{\{n\in \GI_T\}}=h(t_n)$ where 
$h(t)=f(t)\1_{\{T>1\}}$, so~\eqref{eq:sum_SB} follows 
from~\eqref{eq:SB_nnt}. The formulae for the power functions then follow easily.

(b) Let $T>1$ and note that 
\[
\frac{1}{\log T}\E\sum_{n\in\GI_T}f(t_n)
=\int_1^T \frac{f(t)}{t\log T}dt
=\E[f(Z_T)], 
\]
where $Z_T$ has the density $t\mapsto (t\log T)^{-1}$, $t\in[1,T]$. Since $Z_T\cip\infty$, we have $f(Z_T)\cip 0$ and since the variables $|f(Z_T)|$ are bounded by $\sup_{t\in[1,\infty)}|f(t)|$, the dominated convergence theorem implies that $\E[f(Z_T)]\to 0$, completing the proof. 
\end{proof}

We can now prove the following CLT for the cardinality of the set $\GI_T$ defined above. 

\begin{proposition}
\label{prop:sizeof_n_T}
The cardinality $|\GI_T|$ of the set $\GI_T$ satisfies 
the limits 
\[
|\GI_T|/\log T\ciL 1
\qquad\text{and}\qquad 
(|\GI_T|-\log T)/\sqrt{\log T}\cid N(0,1)
\qquad
\text{as }T\to\infty. 
\]
Moreover, for any $T$ we have  
$\GI_T\subset\{1,\ldots,\tau(T)+1\}$ and 
$\E[\tau(T)]=\E[|\GI_T|]=\log^+(T)$, where we define 
$\tau(T)\coloneqq|\{n\in\N: L_n\ge1/T\}|$.
\end{proposition}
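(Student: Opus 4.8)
The plan is to compare $|\GI_T|$ with the random variable $\tau(T)$, which turns out to be genuinely Poisson. Indeed, since $L_n=L_{n-1}(1-V_n)=\prod_{k=1}^n(1-V_k)$, the variables $-\log(1-V_k)$ are iid $\mathrm{Exp}(1)$ and $-\log L_n=\sum_{k=1}^n(-\log(1-V_k))$; equivalently, $\sum_{n\ge1}\delta_{-\log L_n}$ is a rate-one Poisson process on $(0,\infty)$. As $(L_n)_{n\ge0}$ is strictly decreasing to $0$ a.s., the set $\{n\ge1:L_n\ge1/T\}$ is an initial segment $\{1,\dots,\tau(T)\}$ and $\tau(T)$ equals the number of Poisson points in $(0,\log^+T]$, so $\tau(T)\sim\mathrm{Poisson}(\log^+T)$. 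This gives at once $\E[\tau(T)]=\log^+T$, the $L^2$ (hence $L^1$) convergence $\tau(T)/\log T\to1$ (since $\Var(\tau(T))/(\log T)^2=1/\log T\to0$), and, by the classical central limit theorem for the Poisson law along the continuous parameter $\lambda=\log T\to\infty$ (immediate from its characteristic function), $(\tau(T)-\log T)/\sqrt{\log T}\cid N(0,1)$.

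Next I would note that $\E|\GI_T|=\log^+T$ as well: for $T\ge1$ this is Corollary~\ref{cor:int_SB_tauT}(a) applied to $f\equiv1$, giving $\E|\GI_T|=\E\sum_{n\in\GI_T}1=\int_1^T t^{-1}\,dt=\log T$, while $\GI_T=\varnothing$ for $T<1$. (Alternatively, $\E|\GI_T|=\sum_{n\ge1}\p(\ell_n\ge1/T)=\sum_{n\ge1}\p(-\log\ell_n\le\log T)$ with $-\log\ell_n\sim\mathrm{Gamma}(n,1)$, and $\sum_{n\ge1}\p(\mathrm{Gamma}(n,1)\le\log T)=\E[\mathrm{Poisson}(\log^+T)]$.)

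The geometric ingredient is the pathwise inclusion $\GI_T\subseteq\{1,\dots,\tau(T)+1\}$. If $n\in\GI_T$ then $\ell_n\ge1/T$, and since $\ell_n=L_{n-1}-L_n\le L_{n-1}$ we get $L_{n-1}\ge1/T$; hence either $n=1$ or $n-1\in\{1,\dots,\tau(T)\}$, and in both cases $n\le\tau(T)+1$. In particular $|\GI_T|\le\tau(T)+1$, so $D_T:=\tau(T)-|\GI_T|$, which is integrable by the two mean computations, satisfies $D_T\ge-1$ a.s.\ and $\E[D_T]=\log^+T-\log^+T=0$. Therefore $D_T^-\le1$ pathwise, $\E[D_T^+]=\E[D_T^-]\le1$, and $\E\big||\GI_T|-\tau(T)\big|=\E|D_T|\le2$ for every $T$; this is the key estimate, and it requires no delicate overshoot analysis.

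Finally, writing $|\GI_T|=\tau(T)-D_T$, the bound $\E\big||\GI_T|/\log T-1\big|\le2/\log T+\E|\tau(T)/\log T-1|\to0$ yields $|\GI_T|/\log T\ciL1$; and since $D_T/\sqrt{\log T}\to0$ in $L^1$, hence in probability, Slutsky's theorem applied to $(|\GI_T|-\log T)/\sqrt{\log T}=(\tau(T)-\log T)/\sqrt{\log T}-D_T/\sqrt{\log T}$ gives $(|\GI_T|-\log T)/\sqrt{\log T}\cid N(0,1)$. The main obstacle is establishing the geometric inclusion $\GI_T\subseteq\{1,\dots,\tau(T)+1\}$ and noticing that the two exact identities $\E[\tau(T)]=\E[|\GI_T|]=\log^+T$ force $\E[D_T]=0$, which is precisely what makes $\E|D_T|$ controllable for free; once $\tau(T)$ is identified as a Poisson count, its limit theorems are classical.
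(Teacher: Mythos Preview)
Your proof is correct and follows essentially the same approach as the paper: identify $\tau(T)$ as a Poisson$(\log^+T)$ count via the exponential-increment random walk $-\log L_n$, establish the inclusion $\GI_T\subseteq\{1,\dots,\tau(T)+1\}$ from $\ell_n\le L_{n-1}$, and combine the two exact mean identities to bound $\E\big||\GI_T|-\tau(T)\big|\le2$, after which the limits transfer from $\tau(T)$ to $|\GI_T|$. The only cosmetic differences are that the paper phrases the inclusion via the contrapositive and invokes uniform integrability for $\tau(T)/\log T\ciL1$, whereas you use the direct $L^2$ bound $\Var(\tau(T))/(\log T)^2\to0$, which is marginally cleaner.
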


\begin{proof}
Recall by definition of the stick-remainder that  $L_n=\prod_{i=1}^n (1-V_i)$ for an iid sequence $(V_i)_{i\in\N}$ of uniform random variables
on the unit interval. Thus $S_n\coloneqq -\log L_n$ is a random walk with exponential increments of unit mean or, equivalently, the jump times of a Poisson process with unit intensity. Thus, the definition of $\tau(T)$ implies that, for $T>1$, $\tau(T)$ follows the marginal distribution of the Poisson process with unit intensity at time $\log T$. Put differently, $\tau(T)$ is Poisson distributed with mean $\log T$. 
In particular, we have 
$(\tau(T)-\log T)/\sqrt{\log T}\cid N(0,1)$ as $T\to\infty$. 

Recall that $\ell_m=L_n\prod_{i=n+1}^m V_i<L_n$ for all $m>n$. Since $L_{\tau(T)+1}<1/T$ we get $\ell_m<1/T$ for all $m>\tau(T)+1$ and thus $\GI_T\subset\{1,\ldots,\tau(T)+1\}$ and  $\tau(T)+1-|\GI_T|\ge 0$.  Corollary~\ref{cor:int_SB_tauT}(a) gives 
$\E[\tau(T)+1-|\GI_T|]=1$ and thus $\E[|\tau(T)-|\GI_T||]\le 2$ for all $T>0$, implying 
$(\tau(T)-|\GI_T|)/\sqrt{\log T}\ciL 0$. Hence, the CLT for $\tau(T)$ yields the CLT for $|\GI_T|$. Since the random variables $\tau(T)/\log T$, $T\ge 2$, are uniformly integrable, we have $\tau(T)/\log T\ciL 1$ and thus 
\[
|\GI_T|/\log T=(|\GI_T|-\tau(T))/\log T +\tau(T)/\log T \ciL 1.\qedhere
\]
\end{proof}

\begin{remark}\nf
The law $|\GI_T|$
is much more complicated
than that of $\tau(T)$, which follows a Poisson distribution with mean $\log T$ (for $T>1$). The reason for this lies in the fact that $\tau(T)$ is a stopping time in a correct filtration, while $|\GI_T|$ is not, making its moments hard to control. In Proposition~\ref{prop:sizeof_n_T} we circumvent this problem by approximating $|\GI_T|$ with 
$\tau(T)$. We note that, even though the expectation 
$\E\left[\lvert\tau(T)-|\GI_T|\rvert\right]\le 2$
is bounded for all $T>0$, the difference 
$|\tau(T)-|\GI_T||$ 
takes arbitrarily large values with positive probability.
\end{remark}

The following $L^1$ limit holds. 

\begin{proposition}
\label{prop:det_compensator}
Let $f:[1,\infty)\to\R_+$ be measurable and non-increasing with $\lim_{t\to\infty}f(t)=0$. Then 
\[
\frac{1}{\sqrt{\log T}}\bigg(
\sum_{n\in \GI_T}f(t_n)-\E\sum_{n\in\GI_T}f(t_n)\bigg)
=\frac{1}{\sqrt{\log T}}\bigg(
\sum_{n\in \GI_T}f(t_n)-\int_1^T \frac{f(t)}{t}dt\bigg)
\ciL 0,
\quad\text{as }T\to\infty.
\]
\end{proposition}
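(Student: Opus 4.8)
The plan is to show $\Var(W_T)=\oh(\log T)$ for $W_T\coloneqq\sum_{n\in\GI_T}f(t_n)$ and then invoke the Cauchy--Schwarz inequality: since $\E W_T=\int_1^T f(t)t^{-1}\,dt$ by Corollary~\ref{cor:int_SB_tauT}(a), this gives $\E\big|W_T-\int_1^T f(t)t^{-1}dt\big|\le\Var(W_T)^{1/2}=\oh(\sqrt{\log T})$, which is the assertion. Observe that the crude bounds $\Var(W_T)\le\E[W_T^2]\le f(1)^2\E[(\tau(T)+1)^2]$ and $\E\big[\sum_{m<n}f(t_m)f(t_n)\big]\le\big(\int_1^T f(t)t^{-1}dt\big)^2$ are only of order $(\log T)^2$, so a genuine concentration estimate is required.

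First I would set up a recursion using the self-similarity of the stick-breaking process. Write $g\coloneqq f\1_{[1,\infty)}$ (non-increasing, vanishing at infinity, and bounded by $f(1)$ since $f$ is non-increasing), $m(s)\coloneqq\int_0^s g(t)t^{-1}\,dt$, and let $W_s$ denote $\sum_{n\ge1}g(t_n)$ for a uniform stick-breaking process on $[0,s]$, so that $W_T$ coincides with the quantity of interest, $\E W_s=m(s)$, and $v(s)\coloneqq\Var(W_s)\le f(1)^2\E[(\tau(s)+1)^2]<\infty$ by Proposition~\ref{prop:sizeof_n_T}. Since $W_s=g(sV_1)+W'$, where conditionally on $V_1$ the term $W'$ is a copy of $W_{s(1-V_1)}$ independent of $V_1$, the variance decomposition yields
\[
v(s)=h(s)+\E\big[v(s(1-V_1))\big],\qquad h(s)\coloneqq\Var_U\big(g(sU)+m(s(1-U))\big),\quad U\sim\U(0,1).
\]
Unfolding this identity $k$ times and letting $k\to\infty$ (the remainder $\E[v(TL_k)]$ tends to $0$ by bounded convergence, since $v$ is bounded on $[0,T]$, vanishes on $[0,1)$, and $TL_k\to0$ a.s.) gives $v(T)=\sum_{k\ge0}\E[h(TL_k)]$.

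Next I would record that $h$ is bounded by a constant $C_f$ depending only on $f(1)$, vanishes on $[0,1]$, and satisfies $h(s)\to0$ as $s\to\infty$. Writing $m(s(1-U))=m(s)-\Delta_s(U)$ with $\Delta_s(U)\coloneqq\int_{s(1-U)}^s f(t)t^{-1}\,dt\ge0$, one has $h(s)=\Var_U\big(g(sU)-\Delta_s(U)\big)\le 2\E_U[g(sU)^2]+2\E_U[\Delta_s(U)^2]$; the bounds $g\le f(1)$, $\Delta_s(U)\le f(s(1-U))\log\frac1{1-U}$ on $\{s(1-U)\ge1\}$ (using the monotonicity of $f$), and $\Delta_s(U)\le m(s)\le f(1)\log^+ s$ on the complementary set of length $\le1/s$, give the uniform bound and, via dominated convergence together with $f(t)\to0$, also the decay $h(s)\to0$.

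Finally I would estimate $v(T)=\sum_{k\ge0}\E[h(TL_k)]$. Since $-\log L_k$ is the $k$-th arrival time of a unit-rate Poisson process, for fixed $R>1$ and $T>R$ the indices $k$ with $TL_k\ge1$ (the only ones contributing, as $h$ vanishes on $[0,1]$) split into those with $TL_k\ge R$, on which $h(TL_k)\le\eta(R)\coloneqq\sup_{s\ge R}h(s)$ and whose expected number is at most $1+\log T$, and those with $1\le TL_k<R$, on which $h(TL_k)\le C_f$ and whose expected number equals $\log R$. Hence $v(T)\le(1+\log T)\eta(R)+C_f\log R$, so $\limsup_{T\to\infty}v(T)/\log T\le\eta(R)$ for every $R>1$; letting $R\to\infty$ and using $\eta(R)\to0$ yields $\Var(W_T)=\oh(\log T)$, as needed. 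The main obstacle is the recursion together with the decay $h(s)\to0$: the residual mass $m(s(1-U))$ and the fresh increment $g(sU)$ cannot both be large, and the identity $m(s(1-U))=m(s)-\Delta_s(U)$ with the monotonicity of $f$ is exactly what renders this quantitative.
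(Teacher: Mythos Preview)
Your proof is correct and takes a genuinely different route from the paper's.

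The paper first replaces $\sum_{n\in\GI_T}f(t_n)$ by $\sum_{n=1}^{\tau(T)}f(TL_n)$ (the discrepancy is bounded in $L^1$ by a short monotonicity argument), and then exploits a distributional identity: since the jump times $S_n=-\log L_n$ form a unit-rate Poisson process on $[0,\log T]$, the reflection $S_n\mapsto\log T-S_n$ preserves the law of the point process, so $\sum_{n=1}^{\tau(T)}f(TL_n)\eqd\sum_{n=1}^{\tau(T)}f(e^{S_n})=:D_T$. Campbell's formula then gives $\Var(D_T)=\int_1^T f(t)^2 t^{-1}\,dt$, which is $\oh(\log T)$ by dominated convergence. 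This is very short once one sees the reflection trick.

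Your approach instead attacks $\Var(W_T)$ head-on via the self-similarity recursion $v(s)=h(s)+\E[v(s(1-U))]$, unfolds it to $v(T)=\sum_{k\ge0}\E[h(TL_k)]$, and then shows $h$ is bounded with $h(s)\to0$, so that the sum is $\oh(\log T)$. This is longer but entirely elementary: it uses only the stick-breaking structure and avoids both the swap to $TL_n$ and the Poisson reflection identity. The monotonicity of $f$ enters in the same way in both proofs (in yours, to control $\Delta_s(U)$; in the paper's, to bound $A_T$), but your method would in principle extend to other recursive random partitions where no Poisson representation is available, at the cost of having to verify the decay of the ``innovation variance'' $h$ by hand each time.
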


\begin{proof}
Define for every $T$ the random variables 
\[
A_T\coloneqq \sum_{n\in \GI_T}f(t_n)
    -\sum_{n=1}^{\tau(T)}f(TL_{n}),
\qquad\text{and}\qquad
B_T\coloneqq \sum_{n=1}^{\tau(T)}f(TL_{n})
    -\int_1^T \frac{f(t)}{t}dt,
\]
and note that it suffices to show that $\E|A_T|$ 
is bounded for $T>1$ and $B_T/\sqrt{\log T}\ciL 0$. 

By Lemma~\ref{lem:SB_weak_conv} and the 
equality in law $t_n\eqd TL_n$, we have 
\begin{equation}
\label{eq:E-A_T}
\E[A_T]
=\sum_{n\in\N}\E[f(t_n)\1_{\{t_n\ge 1\}}]
-\sum_{n\in\N}\E[f(TL_n)\1_{\{TL_n\ge 1\}}]
=0.
\end{equation}
Since $f$ is non-increasing
and $t_n\leq T L_{n-1}$ for all $n\in\N$,
we have $C_T\coloneqq \sum_{n\in \GI_T}(f(t_n)-f(TL_{n-1}))\le 0$.  
Similarly, since $f$ is non-increasing, 
Proposition~\ref{prop:sizeof_n_T} gives 
\begin{align*}
|C_T-A_T|
&=\bigg|\sum_{n=2}^{\tau(T)+1}f(TL_{n-1})
    -\sum_{n\in\GI_T}f(TL_{n-1})\bigg|\\
&=\bigg|-f(T)+\sum_{n\in\{1,\ldots,\tau(T)+1\}\setminus \GI_T}f(TL_{n-1})\bigg|
\le f(1)|\tau(T)+2-|\GI_T||.
\end{align*}
Thus~\eqref{eq:E-A_T} and Proposition~\ref{prop:sizeof_n_T} yield $\E[|C_T|]=-\E[C_T]=\E[A_T-C_T]\le 2f(1)$, implying that $\E|A_T|$ is bounded by $4f(1)$ for all $T>1$. 

It remains to show that $B_T/\sqrt{\log T}\ciL 0$. Let $S_n\coloneqq-\log L_n$ and note that 
$\Xi_T\coloneqq\sum_{i=1}^{\tau(T)}\delta_{S_i}$ is a random measure with atoms at the jump times on the interval $[0,\log T]$ of a Poisson process with unit intensity. Thus $\Xi_T$ is a Poisson point process on $[0,\log T]$ with the Lebesgue measure as its mean measure. By the reflection and translation invariance of the Lebesgue measure, the mapping theorem for Poisson point processes~\cite[Sec.~2.3]{MR1207584} gives  $\Xi_T\eqd\sum_{i=1}^{\tau(T)}\delta_{\log T-S_i}$, implying 
\[
D_T\coloneqq \sum_{n=1}^{\tau(T)}f(e^{S_n})
\eqd\sum_{n=1}^{\tau(T)}f(e^{\log T-S_n})
=\sum_{n=1}^{\tau(T)}f(TL_n)
=B_T+\int_1^T \frac{f(t)}{t}dt.
\]
Campbell's formula (see~\cite[p.~28]{MR1207584}) yields 
\[
\E[D_T]=\int_0^{\log T}f(e^x)dx
=\int_1^T\frac{f(t)}{t}dt,
\qquad\text{and}\qquad
\Var[D_T]=\int_0^{\log T}f(e^x)^2dx
=\int_1^T\frac{f(t)^2}{t}dt.
\]
Thus, it suffices to show that 
$\E[B_T^2]/\log T=\Var[D_T]/\log T \to 0$ as $T \to \infty$. Consider the distribution functions $g_T(t)=\log t/\log T$ for $t\in[1,T]$ and define $Z_T\coloneqq g_T^{-1}(U)=T^U$ for all $T>1$ and some fixed uniform random variable $U$ on $(0,1)$. Then $Z_T\to\infty$ a.s. and hence $f(Z_T)^2\to 0$ a.s. as $T\to\infty$. 
By the dominated convergence theorem, 
$\Var[D_T]/\log T=\E[f(Z_T)^2]\to 0$ as $T \to \infty$. 
\end{proof}

\subsection{A conditional limit theorem and the proof of Theorem~\ref{thm:maintheorem1}}
\label{subsec:proof_thm1}

Recall from the first paragraph of Section~\ref{sec:Proof_Thm1} that $(t_n)_{n\in\N}$ denotes a uniform stick-breaking process on $[0,T]$, independent of $X$, and that $\GI_T$ denotes the set $\{n\in\N:t_n\ge 1\}$. Each horizontal length~$t_n$ has an associated slope given by $\xi_n/t_n$, where $\xi_n=X_{TL_{n-1}}-X_{TL_n}$ is the corresponding vertical height. Aggregate all the horizontal lengths with a common slope in the sequence $(t_n)_{n\in\N}$ into a maximal horizontal length corresponding to that slope. Consider the set $\GF_T$ of the maximal horizontal lengths with size at least~1. Note that, by~\cite[Thm~12]{CM_Fluctuation_Levy}, $|\GF_T|\eqd H_T$, where $H_T$ is the number of all horizontal lengths greater or equal to $1$ of the maximal faces of the concave majorant $t\mapsto C_T^\frown(t)$. The analysis of the set $\GF_T$ is based on the properties of the $\GI_T$ established in Subsection~\ref{subsec:SB} above. This strategy is feasible because the difference of sets $\GF_T$ and $\{t_n:n\in\GI_T\}$ is bounded in $L^1$ in the following sense.


\begin{lemma}
\label{lem:face_discrepancy}
For any bounded function 
$f:[1,\infty)\to\R$, the following holds 
\[
\sup_{T>0}\E\Bigg|\sum_{t\in\GF_T}f(t)
-\sum_{n\in\GI_T}f(t_n)\Bigg|<\infty.
\]
\end{lemma}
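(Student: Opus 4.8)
The plan is to exhibit an almost-perfect cancellation between the two sums, so that only a bounded remainder survives. First I would recall from \cite[Thm~12]{CM_Fluctuation_Levy} that, in law, the maximal faces of $C_T^\frown$ are obtained from the sticks $\{(t_n,\xi_n)\}_{n\in\N}$ by grouping together those of a common slope $\xi_n/t_n$ (in a concave function the faces of any fixed slope are consecutive, hence merge into one maximal face), a maximal face of slope $s$ having horizontal length $\sum_{n:\,\xi_n/t_n=s}t_n$. If $X$ is \emph{not} a compound Poisson process with drift, then $X_t$ is non-atomic for every $t>0$, and since the $\xi_n$ are increments of $X$ over disjoint intervals measurable with respect to the stick-breaking process $\ell$ (itself independent of $X$), a.s.\ the slopes $\xi_n/t_n$, $n\in\N$, are pairwise distinct; then every stick is its own maximal face and $\sum_{t\in\GF_T}f(t)=\sum_{n\in\GI_T}f(t_n)$ a.s., so there is nothing to prove. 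Hence one may assume $X$ is compound Poisson with drift $d$ and positive jump rate $\lambda$ (pure drift being excluded by the hypotheses under which the lemma is applied); write $Y_t\coloneqq X_t-dt$ for the pure-jump part.

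Next I would locate the unique ``composite'' maximal face. A stick $n$ has slope exactly $d$ iff the net jump of $Y$ over its interval vanishes; using once more the independence of $\ell$ from $X$, one checks that a.s.\ the only slope realised by two or more sticks is $d$ (conditionally on $X$, a shared non-drift slope would force the continuously-distributed ratio of two stick-lengths to take a prescribed value). So, with $m_d\coloneqq\sum_{n:\,\xi_n/t_n=d}t_n\in[0,T]$ the length of the (at most one) maximal face of slope $d$, every face of slope $\ne d$ is a single stick and cancels the matching term in $\sum_{n\in\GI_T}f(t_n)$; a.s.\ therefore
\begin{equation*}
\sum_{t\in\GF_T}f(t)-\sum_{n\in\GI_T}f(t_n)
=f(m_d)\1_{\{m_d\ge1\}}-\sum_{n\in\GI_T}f(t_n)\1_{\{\xi_n/t_n=d\}},
\end{equation*}
and consequently $\bigl|\sum_{t\in\GF_T}f(t)-\sum_{n\in\GI_T}f(t_n)\bigr|\le\|f\|_\infty(1+R_T)$, where $R_T\coloneqq|\{n\in\GI_T:\xi_n/t_n=d\}|$ counts the ``long drift sticks''.

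It then remains to bound $\E R_T$ uniformly in $T$. Conditioning on $\ell$ and using stationarity and independence of the increments of $X$, $\E[\1_{\{\xi_n/t_n=d\}}\mid\ell]=q(t_n)$ with $q(t)\coloneqq\p(Y_t=0)$, so Corollary~\ref{cor:int_SB_tauT}(a) gives $\E R_T=\E\sum_{n\in\GI_T}q(t_n)=\int_1^T q(t)t^{-1}\,dt\le\int_1^\infty q(t)t^{-1}\,dt$. Finally I would check this last integral is finite: if the jump law is a single atom then $q(t)=e^{-\lambda t}$ and this is clear; otherwise the jump law is non-degenerate, so by a local limit / concentration estimate (Kolmogorov--Rogozin) the chance that $k$ independent jumps sum to $0$ is $\Oh(k^{-1/2})$, and combined with a Chernoff bound for the Poisson count of jumps over a length-$t$ interval this yields $q(t)=\Oh(t^{-1/2})$, hence $\int_1^\infty q(t)t^{-1}\,dt<\infty$. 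Since $\GF_T=\GI_T=\emptyset$ when $T<1$, this gives $\sup_{T>0}\E\bigl|\sum_{t\in\GF_T}f(t)-\sum_{n\in\GI_T}f(t_n)\bigr|\le\|f\|_\infty(1+\int_1^\infty q(t)t^{-1}\,dt)<\infty$.

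The main obstacle is the structural step, not any single estimate: the naive bound $\|f\|_\infty(|\GF_T|+|\GI_T|)$ is worthless because $\E|\GI_T|=\log^+T\to\infty$, so everything hinges on the cancellation, which rests on knowing precisely which L\'evy processes admit a repeated face-slope — only compound Poisson with drift, with all repeats at the drift slope — a fact distilled from \cite[Thm~12]{CM_Fluctuation_Levy}. Making the two ``a.s.\ distinct slopes'' assertions into rigorous arguments, and correctly handling the degenerate sub-cases (atomic jump laws, where jumps may cancel over long intervals, and pure-drift processes), is the fiddly part; and the remainder $R_T$ has bounded expectation only because $\p(Y_t=0)$ decays like a power of $t$, so a mere ``$\p(Y_t=0)\to0$'' would not be enough.
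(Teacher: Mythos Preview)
Your proof is correct and follows essentially the same strategy as the paper: split into the diffuse case (where $\GF_T=\{t_n:n\in\GI_T\}$ a.s.) and the compound Poisson with drift case, observe that in the latter only the drift slope can repeat, and bound the expected number of long drift-slope sticks by $\int_1^T t^{-1}\p(X_t=\gamma t)\,dt$ via the compensation formula.

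The only substantive difference is in how two technical facts are justified. For the finiteness of $\int_1^\infty t^{-1}\p(X_t=\gamma t)\,dt$, the paper simply cites~\cite[Lem.~48.3]{SatoBookLevy}, whereas you give a self-contained argument via Kolmogorov--Rogozin plus a Poisson tail bound; your route is more elementary and makes the $t^{-1/2}$ decay explicit. For the claim that non-drift slopes are a.s.\ pairwise distinct, the paper integrates over the joint density of $(t_n,t_m)$ and invokes~\cite[Prop.~27.6]{SatoBookLevy} to get $\p(X_s/s=X'_u/u)=\p(X_s/s=\gamma=X'_u/u)$ for a.e.\ $(s,u)$; your conditioning-on-$X$ heuristic reaches the same conclusion but is somewhat informal (since the slopes depend on $X$ evaluated at the random stick endpoints, the ``prescribed value'' for the length ratio is not quite fixed once $X$ is), and the paper's formulation is cleaner to make rigorous.
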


\begin{proof}
Suppose $X$ is not compound Poisson with drift. Then, by Doeblin's diffuseness lemma~\cite[Lem.~15.22]{MR1876169} and~\cite[Thm~12]{CM_Fluctuation_Levy}, no two slopes in the sequence $(\xi_n/t_n)_{n\in\N}$ coincide, implying the identity $\GF_T=\{t_n:n\in\GI_T\}$ a.s. The claim then follows since both random sums are equal a.s.

Suppose $X$ is compound Poisson with drift $\gamma$ (see~\cite[p.~39]{SatoBookLevy} for the definition of the drift of a L\'evy processes of finite variation). 
Consider two horizontal lengths 
$t_n$ and $t_m$ such that the corresponding slopes $\xi_n/t_n$
and
$\xi_m/t_m$ are equal with positive 
probability. 
Since 
the pair $(t_n,t_m)$ has a density $f_{n,m}:(0,T)\times (0,T)\to(0,\infty)$,
the result in~\cite[Prop.~27.6]{SatoBookLevy} 
implies
\[
\p(\xi_n/t_n=\xi_m/t_m)
=\int_{(0,T)^2}\p(X_s/s=X'_u/u)f_{n,m}(s,u) ds du
=\p(\xi_n/t_n=\gamma=\xi_m/t_m),
\]
where $X'\eqd X$ is a L\'evy process independent of $X$. Thus all slopes $\xi_n/t_n$ different from $\gamma$ are also different from each other with probability one and therefore the corresponding faces are already maximal.  Hence the set equality $\{t_n:n\in \GI_T\}\setminus\GF_T =\{t_n:n\in\GI_T,\, \xi_n=\gamma t_n\}$
holds a.s.

To complete the proof, it is sufficient to show that the number of faces with length at least~1 and slope $\xi_n/t_n=\gamma$ is bounded in $L^1$. By Corollary~\ref{cor:int_SB_tauT}(a), we have 
\[
\E|\{n\in\GI_T:\xi_n/t_n=\gamma\}|
=\E\sum_{n\in \GI_T}\p(X_{t_n}=\gamma t_n|t_n)
=\int_1^T \frac{\p(X_t=\gamma t)}{t}dt
\xrightarrow[T\to\infty]{}\int_1^\infty \frac{\p(X_t=\gamma t)}{t}dt,
\]
where the limit is finite 
by~\cite[Lem.~48.3]{SatoBookLevy}.
\end{proof}

\begin{remark}\nf
The proof of Lemma~\ref{lem:face_discrepancy} implies that
the only maximal face of the concave majorant $C_T^\frown$ of a compound Poisson process $X$ with drift $\gamma$ that corresponds to more than one face in the representation in~\cite[Thm~12]{CM_Fluctuation_Levy} is the face whose slope equals $\gamma$.
All the other faces in this representation are finite in number and have slopes different from each other.
\end{remark}

The following result, a conditional CLT given $\ell$, is the final ingredient for the proof of Theorem~\ref{thm:maintheorem1}.

\begin{proposition}\label{prop:Upsilon-approx}
Suppose $\E[X_1]=0$ and $\sigma\coloneqq \sqrt{\E[X_1^2]}\in(0,\infty)$. If $\nu\ne0$, choose $\kappa\geq1$ such that $\nu((-\kappa,\kappa))\in(0,\infty]$ and otherwise set $\kappa\coloneqq 1$ and recall that $\sigma_t^2=\sigma^2
    -\int_{\R\setminus(-\kappa\sqrt{t},\kappa\sqrt{t})}
    x^2\nu(dx)$ for $t>0$. Then we have the following limit in probability as $T\to\infty$:
\begin{equation}
\label{eq:Upsilon-approx}
\begin{split}
&\Sigma_T
    -\frac{1}{\sqrt{\log T}}\bigg(
    \sum_{n=1}^\infty \big(\sqrt{\xi_n^2+t_n^2}-t_n\big)
        -\frac{1}{2}\bigg(\sigma^2|\GI_T|
        -\int_\R x^2\log^+(\min\{T, x^2\})\nu(dx)\bigg)\bigg)
    \cip 0,\\
&\quad\text{where}\qquad\Sigma_T\coloneqq 
\frac{1}{2\sqrt{\log T}}\sum_{n\in\GI_T}
    \bigg(\frac{\xi_n^2}{t_n}-\sigma_{t_n}^2\bigg).
\end{split}
\end{equation} 
\end{proposition}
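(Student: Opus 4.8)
The plan is to begin from the elementary identity $\phi_n^2=\xi_n^2-2t_n\phi_n$, where $\phi_n\coloneqq\sqrt{\xi_n^2+t_n^2}-t_n\ge0$; equivalently $\xi_n^2/(2t_n)-\phi_n=\phi_n^2/(2t_n)$. Writing $f(t)\coloneqq\int_{\R\setminus(-\kappa\sqrt t,\kappa\sqrt t)}x^2\nu(dx)$, so that $\sigma^2-\sigma_t^2=f(t)$, and combining $\sum_{n=1}^\infty\phi_n=\sum_{n\in\GI_T}\phi_n+\sum_{n\in\GI_T^\co}\phi_n$ with $\tfrac12\sigma^2|\GI_T|-\tfrac12\sum_{n\in\GI_T}\sigma_{t_n}^2=\tfrac12\sum_{n\in\GI_T}f(t_n)$, a direct rearrangement shows that the difference appearing in~\eqref{eq:Upsilon-approx} equals
\[
\frac1{\sqrt{\log T}}\Bigg(\sum_{n\in\GI_T}\frac{\phi_n^2}{2t_n}+\Big(\tfrac12\textstyle\sum_{n\in\GI_T}f(t_n)-\Theta(T)\Big)-\sum_{n\in\GI_T^\co}\phi_n\Bigg).
\]
It therefore suffices to show that each of the three displayed sums, divided by $\sqrt{\log T}$, tends to $0$ in probability.

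The last sum is handled by a crude first-moment bound: conditionally on the stick-breaking sequence $\ell$, the increments $(\xi_n)_{n\in\N}$ are independent with $\xi_n\eqd X_{t_n}$, so $\E[\phi_n\mid\ell]=\psi(t_n)$ with $\psi(t)\coloneqq\E[\sqrt{X_t^2+t^2}]-t\le\E|X_t|\le\sigma\sqrt t$, and Corollary~\ref{cor:int_SB_tauT}(a) gives $\E\sum_{n\in\GI_T^\co}\phi_n=\int_0^1\psi(t)t^{-1}\,dt\le2\sigma$, whence $(\log T)^{-1/2}\sum_{n\in\GI_T^\co}\phi_n\ciL0$. For the middle term, Tonelli's theorem yields $\int_1^T f(t)t^{-1}\,dt=\int_{\R}x^2\log^+\!\big(\min\{T,x^2/\kappa^2\}\big)\,\nu(dx)=2\Theta(T)+\Oh(1)$, the error term absorbing the $\kappa$-dependent boundary contributions; since $f$ is non-increasing and vanishes at infinity, Proposition~\ref{prop:det_compensator} gives $(\log T)^{-1/2}\big(\sum_{n\in\GI_T}f(t_n)-\int_1^T f(t)t^{-1}\,dt\big)\ciL0$, and combining the two, $(\log T)^{-1/2}\big(\tfrac12\sum_{n\in\GI_T}f(t_n)-\Theta(T)\big)\cip0$.

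The core of the argument is the first sum, for which I would use a truncation. The convexity inequality $\tfrac u2-(\sqrt{1+u}-1)\le\tfrac18 u^2$ for $u\ge0$, applied with $u=\xi_n^2/t_n^2$, gives the deterministic bound $\phi_n^2/(2t_n)\le\xi_n^4/(8t_n^3)$, so it is enough to prove $(\log T)^{-1/2}\sum_{n\in\GI_T}\xi_n^4/t_n^3\cip0$. Fix an exponent $\alpha\in(\tfrac12,1)$, say $\alpha=\tfrac34$, and a slowly diverging sequence $\omega_T\to\infty$ with $\omega_T^2=\oh(\sqrt{\log T})$, say $\omega_T=(\log T)^{1/5}$; let $G_T$ be the event that no interval $(TL_n,TL_{n-1}]$ with $n\in\GI_T$ contains a jump of $X$ of absolute size exceeding $\omega_T t_n^{\alpha}$. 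Conditional independence, the compensation formula, the Chebyshev bound $\nu(\{|x|>a\})\le\sigma^2a^{-2}$, and $\E\sum_{n\in\GI_T}t_n^{1-2\alpha}=\int_1^T t^{-2\alpha}\,dt\le(2\alpha-1)^{-1}$ (Corollary~\ref{cor:int_SB_tauT}(a)) give $\p(G_T^\co)\le C\omega_T^{-2}\to0$. On $G_T$ each $\xi_n$, $n\in\GI_T$, equals $\check\xi_n+m_n$, where $\check\xi_n$ is the centred increment of $X$ over the same interval with jumps above $\omega_T t_n^{\alpha}$ deleted and $|m_n|\le\sigma^2 t_n^{1-\alpha}/\omega_T$ is the (negligible) recentring; as $\check\xi_n$ has jumps bounded by $\omega_T t_n^{\alpha}$, the fourth-cumulant formula gives $\E[\check\xi_n^4\mid t_n]\le3t_n^2\sigma^4+t_n(\omega_T t_n^{\alpha})^2\sigma^2\le C\omega_T^2 t_n^{1+2\alpha}$. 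Hence $\E\big[\1_{G_T}\sum_{n\in\GI_T}\xi_n^4/(8t_n^3)\big]\le C\omega_T^2\,\E\sum_{n\in\GI_T}t_n^{2\alpha-2}=C\omega_T^2\int_1^T t^{2\alpha-3}\,dt\le C'\omega_T^2=\oh(\sqrt{\log T})$, and Markov's inequality together with $\p(G_T^\co)\to0$ completes this step, and the proof.

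The only genuine obstacle is this last step, and it is worth noting why it cannot be run as a plain first-moment estimate: $\E\sum_{n\in\GI_T}\phi_n^2/(2t_n)=\int_1^T t^{-1}\cdot\tfrac1{2t}\E[\phi(X_t,t)^2]\,dt$ coincides with $\Theta(T)$ up to $\Oh(1)$, and $\Theta(T)$ can grow like $\log T/\log\log T$ for suitable L\'evy measures, so it is \emph{not} $\oh(\sqrt{\log T})$ (cf.\ the remark following Corollary~\ref{cor:2nd-moment}). That excess expectation sits entirely on rare configurations in which a single interval hosts a jump far larger than $\sqrt{t_n}$, and the truncation at level $\omega_T t_n^{\alpha}$ is designed to remove precisely those. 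The balance of exponents is the delicate point: $\alpha>\tfrac12$ keeps $\int_1^T t^{-2\alpha}\,dt$ bounded (forcing $\p(G_T^\co)\to0$), $\alpha<1$ keeps $\int_1^T t^{2\alpha-3}\,dt$ bounded (controlling the fourth-moment sum), and $\omega_T\to\infty$ with $\omega_T^2=\oh(\sqrt{\log T})$ absorbs the cost of truncating at $\omega_T t_n^{\alpha}$ in place of the natural scale $\sqrt{t_n}$.
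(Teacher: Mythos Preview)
Your decomposition and your treatment of the two easier terms—the sum over $\GI_T^\co$ and the $f(t_n)$ versus $\Theta(T)$ comparison—are correct and in fact coincide with the paper's argument essentially line for line (the paper calls these $\Sigma_T^{(1)}$ and $\Sigma_T^{(3)}$).

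The difference is in the core step, controlling $(\log T)^{-1/2}\sum_{n\in\GI_T}\phi_n^2/(2t_n)$ (the paper's $\Sigma_T^{(2)}$). Your truncation argument is correct: removing jumps above $\omega_T t_n^{\alpha}$ with $\alpha\in(\tfrac12,1)$ makes the bad event $G_T^\co$ negligible via Chebyshev and the tail bound $\nu(|x|>a)\le\sigma^2 a^{-2}$, and on $G_T$ the fourth-cumulant bound for the truncated increment gives $\E[\check\xi_n^4\mid\ell]\le C\omega_T^2 t_n^{1+2\alpha}$, so the remaining sum has expectation $O(\omega_T^2)=\oh(\sqrt{\log T})$. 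The recentring $m_n$ contributes a term of order $\omega_T^{-4}$, which is harmless. So the proof goes through.

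The paper, however, bypasses all of this with a two-line trick: instead of bounding the first moment (which, as you rightly note, can fail), it bounds the \emph{half-moment}. Subadditivity of $x\mapsto\sqrt{x}$ gives
\[
\Big(\sum_{n\in\GI_T}\frac{\xi_n^4}{t_n^3}\Big)^{1/2}
\le\sum_{n\in\GI_T}\frac{\xi_n^2}{t_n^{3/2}},
\]
and now only the \emph{second} moment of $\xi_n$ is needed: $\E_\ell[\xi_n^2/t_n^{3/2}]=\sigma^2 t_n^{-1/2}$, and $\E\sum_{n\in\GI_T}t_n^{-1/2}\le 2$ by Corollary~\ref{cor:int_SB_tauT}(a). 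Hence $\E[|\Sigma_T^{(2)}|^{1/2}]\le 2\sigma^2(\log T)^{-1/4}\to 0$, which already implies $\Sigma_T^{(2)}\cip 0$. This avoids truncation, the choice of $\alpha$ and $\omega_T$, and the fourth-cumulant computation entirely. Your approach is more robust in spirit (it would adapt if one needed explicit tail control rather than just convergence in probability), but here the $L^{1/2}$ device is both shorter and sharper.
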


\begin{proof} 
Define for every $T>1$, the random variables
\begin{gather*}
\Sigma_T^{(1)}
\coloneqq\frac{1}{\sqrt{\log T}}\sum_{n\in \GI_T^\co}
    \Big(\sqrt{t_n^2 + \xi_n^2} - t_n\Big),
\qquad
\Sigma_T^{(2)}
\coloneqq\frac{1}{\sqrt{\log T}}\sum_{n\in \GI_T}
    \bigg(\sqrt{t_n^2 + \xi_n^2} - t_n-\frac{\xi_n^2}{2t_n}\bigg),\\
\text{and}\qquad
\Sigma_T^{(3)}
\coloneqq\frac{1}{2\sqrt{\log T}}\bigg(\sum_{n\in\GI_T}
    \big(\sigma^2-\sigma_{t_n}^2\big)
        -\int_\R x^2\log^+(\min\{T, x^2\})\nu(dx)\bigg),
\end{gather*}
and note that, since $\N=\GI_T^\co\cup\GI_T$,~\eqref{eq:Upsilon-approx} states that $\Sigma_T^{(3)}-\Sigma_T^{(1)}-\Sigma_T^{(2)}\cip 0$ as $T\to\infty$. It is therefore sufficient to prove that the expectations $\E[|\Sigma_T^{(1)}|]$, $\E[|\Sigma_T^{(2)}|^{1/2}]$ and $\E[|\Sigma_T^{(3)}|]$ all tend to $0$ as $T\to\infty$.

Since $\sqrt{t_n^2+\xi_n^2}-t_n\le|\xi_n|$, Corollary~\ref{cor:int_SB_tauT}(a) implies 
\[
\E[|\Sigma_T^{(1)}|]
\le\frac{1}{\sqrt{\log T}}
    \E\sum_{n\in \GI_T^\co} \E[|\xi_n||\ell]
\le\frac{1}{\sqrt{\log T}}
    \E\sum_{n\in \GI_T^\co} \E[\xi_n^2|\ell]^{1/2}
=\frac{1}{\sqrt{\log T}}\E\sum_{n\in \GI_T^\co} \sigma\sqrt{t_n}\xrightarrow[T\to\infty]{} 0.
\]
Taylor's theorem for the function $x\mapsto\sqrt{1+x^2}$ 
around $x=0$ applied to $\sqrt{1+\xi_n^2/t_n^2}$ yields 
\[
|\Sigma_T^{(2)}|
=\frac{1}{\sqrt{\log T}}\bigg|\sum_{n\in \GI_T}
    \frac{\xi_n^4}{8t_n^3}\cdot
    \theta(|\xi_n|/t_n)\bigg|
\le\frac{1}{\sqrt{\log T}}\sum_{n\in \GI_T}
    \frac{\xi_n^4}{8t_n^3},
\] 
where $\theta:[0,\infty)\to[0,1]$ is a bounded function. 
Recall that $\E[X_t^2]=\Var(X_t)=\sigma^2 t$ for all $t\ge 0$. Since $x\mapsto\sqrt{x}$ is concave and starts at $0$, we have
\begin{align*}
\E\bigg[ \bigg| \frac{1}{\sqrt{\log T}}
    \sum_{n\in \GI_T} t_n^{-3}\xi_n^4\bigg|^{1/2}\bigg]
&\le(\log T)^{-1/4} \E\sum_{n\in \GI_T} 
    t_n^{-3/2}\xi_n^2
=(\log T)^{-1/4}\E\sum_{n\in \GI_T}
    \E\big[t_n^{-3/2}\xi_n^2|\ell\big]\\
&=\sigma^2(\log T)^{-1/4}\E\sum_{n\in \GI_T} 
    t_n^{-1/2}
=2\sigma^2(\log T)^{-1/4}(1-T^{-1/2})
\xrightarrow[T\to\infty]{} 0,
\end{align*} 
where the last equality follows from Corollary~\ref{cor:int_SB_tauT}(a).
This implies that $\E[|\Sigma_T^{(2)}|^{1/2}]\to 0$ as $T\to\infty$.  

It remains to prove that $\E[|\Sigma_T^{(3)}|]\to 0$ as $T\to\infty$. Applying Corollary~\ref{cor:int_SB_tauT}(a) and Fubini's theorem, for any $T>1$ we obtain 
\begin{align*}
\E\sum_{n\in \GI_T}
    (\sigma^2-\sigma^2_{t_n})
&=\int_1^T\frac{1}{t}
    \int_{\R\setminus(-\kappa\sqrt{t},\kappa\sqrt{t})}
        x^2\nu(dx)dt\\
&= \int_{\R\setminus(-\kappa,\kappa)}
    \int_{1}^{T\wedge (x^2/\kappa^2)}
        \frac{dt}{t}x^2\nu(dx)
= \int_\R x^2\log^+(\min\{T, x^2/\kappa^2\})\nu(dx).
\end{align*}
Moreover, since $\kappa\ge 1$, we have 
\begin{align*}
0&\le\int_\R x^2\log^+(\min\{T, x^2\})\nu(dx)
    -\E\sum_{n\in \GI_T}(\sigma^2-\sigma^2_{t_n})\\
&=\int_\R 
    (\log(\kappa^2)
        \1_{\{|x|<\sqrt{T}\}}
    +\log(T\kappa^2/x^2)
        \1_{\{\sqrt{T}\le |x|<\kappa\sqrt{T}\}}
    )x^2\nu(dx)
\le \log(\kappa^2)
    \int_\R x^2\nu(dx)<\infty.
\end{align*}
Thus, Proposition~\ref{prop:det_compensator} implies that  
\[
\begin{split}
\Sigma_T^{(3)}
&=\frac{1}{2\sqrt{\log T}}\bigg(
    \sum_{n\in \GI_T}(\sigma^2-\sigma^2_{t_n})
    -\E \sum_{n\in \GI_T}(\sigma^2-\sigma^2_{t_n})\bigg)\\
&\qquad
    + \frac{1}{2\sqrt{\log T}}\bigg(\E \sum_{n\in \GI_T}(\sigma^2-\sigma^2_{t_n})
    - \int_\R x^2\log^+(\min\{T, x^2\})\nu(dx)\bigg)
\ciL 0,
\quad\text{as }.\qedhere
\end{split}
\]
\end{proof}

The conditional limit result is a key ingredient for the proof of Theorem~\ref{thm:maintheorem1} is the following conditional limit result.

\begin{proposition}
\label{prop:conditional_CLT}
Let $\Sigma_T$ be as in~\eqref{eq:Upsilon-approx} in Proposition~\ref{prop:Upsilon-approx}. Then the following conditional limit holds: for any $x\in\R$,
\begin{equation}
\label{eq:mainCLT2}
\p(\Sigma_T\le x|\ell)
\ciL \Phi(\sqrt{2}x/\sigma^2),
\qquad\text{as }T\to\infty,
\end{equation}
where $\Phi$ is the distribution function of a standard normal random variable.
\end{proposition}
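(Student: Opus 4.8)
The plan is to prove \eqref{eq:mainCLT2} by conditioning on the stick-breaking process $\ell$ and recognizing $\Sigma_T$ as, up to negligible terms, a sum of conditionally independent centred random variables whose conditional law satisfies a Lindeberg-type CLT. Working on the event $\{|\GI_T|\ge 1\}$ (whose probability tends to $1$ by Proposition~\ref{prop:sizeof_n_T}), condition on the whole sequence $\ell$: then $\GI_T$ and $(t_n)_{n\in\GI_T}$ become deterministic, and the increments $\xi_n=X_{TL_{n-1}}-X_{TL_n}$ are \emph{independent} with $\xi_n\overset{d}{=}X_{t_n}$ by the independence and stationarity of the increments of $X$ (the stick-remainders partition $[0,T]$ into disjoint intervals of lengths $t_n$). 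Hence, conditional on $\ell$, $\Sigma_T$ is a (finite) sum of independent random variables $Y_n\coloneqq (2\sqrt{\log T})^{-1}(\xi_n^2/t_n-\sigma_{t_n}^2)$, $n\in\GI_T$.

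Next I would compute the conditional first two moments. Since $\E[X_t^2]=\sigma^2 t$, we have $\E[\xi_n^2/t_n\mid\ell]=\sigma^2$, so $\E[Y_n\mid\ell]=(2\sqrt{\log T})^{-1}(\sigma^2-\sigma_{t_n}^2)=(2\sqrt{\log T})^{-1}\int_{\R\setminus(-\kappa\sqrt{t_n},\kappa\sqrt{t_n})}x^2\nu(dx)$, which is \emph{not} zero. However, by Corollary~\ref{cor:int_SB_tauT}(a) and Proposition~\ref{prop:det_compensator} (applied to the non-increasing function $t\mapsto\int_{\R\setminus(-\kappa\sqrt t,\kappa\sqrt t)}x^2\nu(dx)$), the centering sum $\sum_{n\in\GI_T}(\sigma^2-\sigma_{t_n}^2)$ is, after dividing by $\sqrt{\log T}$, within $o(1)$ in $L^1$ of its deterministic mean, which is $\oh(\log T)$; so replacing $Y_n$ by its $\ell$-conditionally centred version $\wt Y_n\coloneqq Y_n-\E[Y_n\mid\ell]$ changes $\Sigma_T$ only by a term tending to $0$ in probability. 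For the conditional variance of $\sum_{n\in\GI_T}\wt Y_n$ I would use that $\Var(\xi_n^2/t_n\mid\ell)=t_n^{-2}\Var(X_{t_n}^2)$; under the zero-mean, finite-variance assumption one has $\Var(X_t^2)=2\sigma^4 t^2+o(t^2)$ as $t\to\infty$ — more precisely $\E[X_t^4]=3\sigma^4t^2+\big(\int x^4\nu(dx)\big)t$ when the fourth moment is finite, and a truncation argument handles the general case — so $\Var(\wt Y_n\mid\ell)=\tfrac14(\log T)^{-1}(2\sigma^4+o(1))$ uniformly for $t_n$ large, and summing over the $\approx\log T$ indices in $\GI_T$ gives conditional variance $\to \sigma^4/2$ by Proposition~\ref{prop:sizeof_n_T}; the small-$t_n$ terms (bounded number near $t_n\approx1$) are negligible.

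Finally I would verify the Lindeberg condition for the triangular array $(\wt Y_n)_{n\in\GI_T}$ conditionally on $\ell$: each $\wt Y_n$ is of order $(\log T)^{-1/2}$ times a random variable with a fourth moment bounded uniformly in $t_n\ge1$ (again $\E[X_t^4]/t^2$ is bounded when the fourth moment is finite, with a truncation reduction otherwise), and there are $\Oh(\log T)$ of them, so the fourth-moment Lyapunov sum is $\Oh((\log T)^{-1})\to0$ — for a.e. realisation of $\ell$ once we pass to a subsequence along which $|\GI_T|/\log T\to1$ and the variance sum converges, which suffices since the target $\Phi(\sqrt2 x/\sigma^2)$ is deterministic. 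The Lindeberg–Feller CLT then gives, for a.e. $\ell$, $\sum_{n\in\GI_T}\wt Y_n\mid\ell\cid N(0,\sigma^4/2)$, i.e.\ $\p(\Sigma_T\le x\mid\ell)\to\Phi(x/\sqrt{\sigma^4/2})=\Phi(\sqrt2 x/\sigma^2)$ a.s.; bounded convergence upgrades this to the claimed $L^1$ convergence. The main obstacle is the bookkeeping around the conditional moments of $\xi_n^2$: controlling $\Var(X_t^2)$ and $\E[X_t^4]$ uniformly in $t$ \emph{without} assuming finite fourth moment of $X_1$, which requires truncating the large jumps of $X$ at scale $\kappa\sqrt{t_n}$ — exactly the scale built into $\sigma_{t_n}^2$ — and showing the truncated part carries the CLT while the discarded large-jump part contributes, in aggregate over $\GI_T$, only the deterministic $\Theta(T)$-type centering already subtracted, plus an $o_{\p}(\sqrt{\log T})$ remainder handled as in Proposition~\ref{prop:Upsilon-approx}.
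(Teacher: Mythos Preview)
Your high-level structure is right and matches the paper: condition on $\ell$, treat $\{\xi_n^2/t_n\}_{n\in\GI_T}$ as a row-wise independent triangular array, and apply a CLT. The gap is in the execution, and it is exactly the obstacle you flag at the end but do not actually overcome.

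First, a concrete error in the re-centering step: you say $\sum_{n\in\GI_T}(\sigma^2-\sigma_{t_n}^2)$ is, after dividing by $\sqrt{\log T}$, within $o(1)$ of its mean, and the mean is $\oh(\log T)$, hence the re-centering is $o_\p(1)$. But $\oh(\log T)/\sqrt{\log T}$ is not $\oh(1)$ --- see Remark~(iii) after Corollary~\ref{cor:2nd-moment}, where this mean can be of order $\log T/\log\log T$. So the re-centering by the \emph{full} conditional mean $\sigma^2$ introduces a bias that need not vanish. Second, and relatedly, the claim $\Var(X_t^2)=2\sigma^4 t^2+o(t^2)$ is false in general: under only $\E[X_1^2]<\infty$ one may have $\E[X_1^4]=\infty$, in which case $\Var(X_t^2)=\infty$ for every $t>0$ and the Lindeberg/Lyapunov machinery has no variance to work with. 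Your proposed truncation at level $\kappa\sqrt{t_n}$ is the natural idea, but showing that the discarded big-jump contribution to $\xi_n^2/t_n$ (after subtracting its mean $\sigma^2-\sigma_{t_n}^2$) is $o_\p(\sqrt{\log T})$ in aggregate is itself a delicate estimate that the sketch does not supply; its second moment involves $t^{-1}\int_{|x|>\kappa\sqrt t}x^4\,\nu(dx)$, which is not even bounded under the standing hypotheses.

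The paper avoids this entirely by a different device. Rather than truncating $X$, it uses the general triangular-array CLT of Petrov (conditions with \emph{built-in} truncation $\1_{\{S_{n,T}\le r\}}$, namely \eqref{eq:stepthreeeq1}--\eqref{eq:stepthreeeq3}) and then verifies those conditions by a \emph{distributional} comparison: it replaces $\xi_n^2/t_n$ by $\sigma_{t_n}^2 Z^2$, $Z\sim N(0,1)$, controlling the swap through the Kolmogorov distance $K(t)=\sup_x|\p(X_t/\sqrt t\le x)-\p(\sigma_t Z\le x)|$. An integration-by-parts identity turns truncated moments into integrals of distribution functions, so no moments of $X$ beyond the second are ever needed, and the required summability $\int_1^\infty t^{-1}K(t)\,dt<\infty$ is imported from~\cite[Thm~1.1]{Bang_et_al_Gaussian_approximation}. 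With the Gaussian proxy in place, the truncated mean is shown to equal $\sigma_{t_n}^2/(2\sqrt{\log T})$ (which is precisely the centering in $\Sigma_T$, so no re-centering bias arises) and the truncated variance sum is computed from $\Var(Z^2)=2$, giving $\sigma^4/2$. This Gaussian-comparison step is the missing idea in your argument; once you have it, the subsequence/a.s.\ reduction you describe in the last paragraph is essentially what the paper does in its Step~2.
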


The limit law in~\eqref{eq:mainCLT2} is $N(0,\sigma^4/2)$ and the convergence in $L^1$ is equivalent to the convergence in probability since the random variables $\p(\Sigma_T\le x|\ell)$ are bounded. In particular,~\eqref{eq:mainCLT2} implies the weak convergence $\p(\Sigma_T\le x)\to \Phi(\sqrt{2}x/\sigma^2)$ for all $x\in\R$. The proof of Proposition~\ref{prop:Upsilon-approx} requires certain limit results for stick-breaking processes from  Subsection~\ref{subsec:SB} and~\cite[Thm~1.1]{Bang_et_al_Gaussian_approximation}.

\begin{proof}
The proof of Proposition~\ref{prop:conditional_CLT} consists of three steps.

\textbf{Step 1.} 
Let $Z \sim N(0,1)$ be independent of the stick-breaking process $\ell$. Fix $r>0$ and $\gamma\ge 0$, let 
\[
g_T(t)\coloneqq
(\log T)^{-\gamma/2}|\E[|X_t^2/t|^{\gamma}\1{\{X_t^2/t\le r \sqrt{\log T}\}}
- |\sigma_t^2 Z^2|^{\gamma}\1{\{\sigma_t^2 Z^2\le r \sqrt{\log T}\}}]|,\qquad\text{$t>0$,} 
\]
where we recall that  $\sigma_t^2=\sigma^2-\int_{\R\setminus(-\kappa\sqrt{t},\kappa\sqrt{t})}x^2\nu(dx)$. In this step we establish the following limit: 
\begin{equation}\label{eq:dctfordiff}
\sum_{n\in \GI_T} g_T(t_n)\ciL 0,
    \quad\text{as }T\to\infty.
\end{equation} 
The integration-by-parts formula implies that for any non-negative random variable $\zeta$ and constant $a\in(0,\infty)$ we have 
\[
a^{-\gamma}\E [\zeta^\gamma\1 _{\{\zeta\le a\}}]
=\p(\zeta\le a)
    -\gamma \int_0^1 x^{\gamma-1}
        \p (\zeta \le ax)dx.
\]
Applying the identity in the previous display twice yields 
\begin{align}
\label{eq:summablegDCT}
0&\le g_T(t)
\le r^{\gamma}K_T(t) 
\le 2r^{\gamma}K(t),
\quad\text{where}\\
\nonumber K_T(t)
&\coloneqq
|\p(X_t^2/t\le r\sqrt{\log T})
        -\p(\sigma_t^2 Z^2\le r\sqrt{\log T})| \\
\nonumber&\quad
+\gamma\int_0^1x^{\gamma-1}|\p(X_{t}^2/t\le xr\sqrt{\log T})-\p(\sigma_t Z\le xr\sqrt{\log T})|dx
\end{align} 
and $K(t):=\sup_{x\in\R}|\p(X_t/\sqrt{t}\le x)-\p(\sigma_t Z\le x)|$. 
Since the normal distribution has a bounded density, the weak limits $X_t/\sqrt{t}\cid N(0,\sigma^2)$ and $\sigma_tZ\cid N(0,\sigma^2)$ as $t\to\infty$ hold in the Kolmogorov distance by~\cite[1.8.31~\&~1.8.32,~p.~43]{MR1353441}, implying $\lim_{t\to\infty}K(t)=0$. Moreover, by the dominated convergence theorem, we have $\lim_{T\to\infty}K_T(t)=0$ and thus $\lim_{T\to\infty}g_T(t)=0$ for all $t>0$.  



Let $\ov\Xi_T$ and $\ov\Xi_\infty$ be the coupled point processes described in Lemma~\ref{lem:SB_weak_conv} and recall that $\ov\Xi_T\to\ov\Xi_\infty$ in the vague topology and, for any $N>1$, we have $\ov\Xi_\infty([1,N])<\infty$ and $\ov\Xi_T|_{[1,N]}=\ov\Xi_\infty|_{[1,N]}$ for all sufficiently large $T$. By the definition of vague topology, we have $\int_{[1,\infty)} K(x)\ov\Xi_T(dx)\to \int_{[1,\infty)} K(x)\ov\Xi_\infty(dx)$ a.s. Since $g_T(t)\to 0$ as $T\to\infty$ for every atom $t$ of $\ov\Xi_\infty|_{[1,N]}$, we have 
\begin{multline*}
\limsup_{T\to\infty}\int_{[1,\infty)} g_T(x)\ov\Xi_T(dx)
\le\limsup_{T\to\infty}\int_{[1,N]} g_T(x)\ov\Xi_T(dx)
+ \limsup_{T\to\infty}\int_{(N,\infty)} 2r^\gamma K(x)\ov\Xi_T(dx)\\
=\limsup_{T\to\infty}\int_{[1,N]} g_T(x)\ov\Xi_\infty(dx)
+ \int_{(N,\infty)}2r^\gamma K(x)\ov\Xi_\infty(dx)
= 2r^\gamma \int_{(N,\infty)} K(x)\ov\Xi_\infty(dx).
\end{multline*}
By~\cite[Thm~1.1]{Bang_et_al_Gaussian_approximation}  we have 
$\E\int_{[1,\infty)}K(x)\ov\Xi_\infty(dx)
=\int_1^\infty t^{-1}K(t)dt<\infty$. Therefore, the display above and Fatou's lemma imply 
\begin{align*}
\limsup_{T\to\infty}\E\sum_{n\in\GI_T} g_T(t_n)
&\le \E\limsup_{T\to\infty}\int_{[1,\infty)} g_T(x)\ov\Xi_T(dx)\\
&\le 2r^\gamma \E\int_{(N,\infty)} K(x)\ov\Xi_\infty(dx)
=2r^\gamma \int_{N}^\infty \frac{K(x)}{x}dx
\to 0,\qquad\text{as $N\to\infty$,}
\end{align*}
thus proving~\eqref{eq:dctfordiff}. 

\textbf{Step 2.} 
Denote $S_{n,T}\coloneqq\xi_n^2/(2t_n\sqrt{\log T})$ for all $n\in\N$ and $T>1$. Assume that the following limits in probability hold as $T\to\infty$:
\begin{align}
\label{eq:stepthreeeq1}
&\sum_{n\in \GI_T}
    \p_\ell(S_{n,T}\ge\epsilon)
\cip0,
\quad\text{ for every $\epsilon>0$,}\\
\label{eq:stepthreeeq2}
&\sum_{n\in\GI_T}\Var_\ell\left(S_{n,T}\1_{\{S_{n,T}
        \le r\}}\right)
\cip\frac{\sigma^4}{2},
\quad\text{ for some $r>0$,}\\
\label{eq:stepthreeeq3}
&\sum_{n\in \GI_T}
    \left(\E_\ell\left[S_{n,T}
        \1 _{\{S_{n,T}\le r'\}}\right]
    -\frac{\sigma_{t_n}^2}{2\sqrt{\log T}}\right)
\cip0,
\quad\text{ for some $r'>0$,}
\end{align} 
where we denote $\p_\ell(\cdot)=\p(\cdot|\ell)$, $\E_\ell[\cdot]=\E[\cdot|\ell]$ and  $\Var_\ell(\cdot)\coloneqq \Var(\cdot|\ell)$. We now prove that~\eqref{eq:stepthreeeq1}--\eqref{eq:stepthreeeq3} imply the $L^1$ limit in~\eqref{eq:mainCLT2}.

Since the random variables in~\eqref{eq:mainCLT2} are bounded, it suffices to prove the limit in probability. Fix a sequence $(T_k)_{k\in\N}$ such that $T_k\to\infty$. By a diagonal argument, there exists a subsequence, again denoted $(T_k)_{k\in\N}$ for ease of notation, such that the limit in~\eqref{eq:stepthreeeq1} holds for all positive rational $\epsilon$ as $T_{k}\to\infty$ almost surely. Thus, the limit in~\eqref{eq:stepthreeeq1} holds for \emph{all} $\epsilon>0$ as $T_k\to\infty$ a.s. Moreover, we may assume that the limits in~\eqref{eq:stepthreeeq2}--\eqref{eq:stepthreeeq3} hold a.s. as $T_k\to\infty$. Recall that, given the stick-breaking process $\ell$, the variables $\{S_{n,T_k}:n\in\GI_{T_{k}}\}$ are independent, making $(\{S_{n,T_{k}}:n\in\GI_{T_{k}}\})_{k\in\N}$ a triangular array of row-wise independent random variables. Applying the CLT for triangular arrays in~\cite[Thm~18, Chap.~IV, \S 4]{PetrovSumIndep}, we deduce that~\eqref{eq:mainCLT2} holds a.s. as $T_k\to\infty$. 

\textbf{Step 3.}
In this step we prove~\eqref{eq:stepthreeeq1}--\eqref{eq:stepthreeeq3}. Recall that $Z\sim N(0,1)$ is independent of $\ell$.
By~\eqref{eq:dctfordiff} with $\gamma=0$ and  $r=\epsilon$, Markov's inequality and Proposition~\ref{prop:sizeof_n_T}, we have 
\begin{align*}
\lim_{T\to\infty}\E\sum_{n\in \GI_T}
    \p_\ell\left(\frac{\xi_n^2/t_n}{\sqrt{\log T}}>\epsilon\right)
&=\lim_{T\to\infty}\E\sum_{n\in \GI_T}
    \p_\ell\left(\frac{\sigma_{t_n}^2Z^2}{\sqrt{\log T}}>\epsilon\right)\\
&\le\lim_{T\to\infty}\E\sum_{n\in\GI_T}\frac{\sigma^6_{t_n}\E[Z^6]}{(\epsilon\sqrt{\log T})^{3}} 
\le\lim_{T\to\infty}
\frac{15\sigma^6\E|\GI_T|}{\epsilon^3(\log T)^{3/2}}
=0,
\end{align*}
for all $\epsilon>0$, implying~\eqref{eq:stepthreeeq1} (recall that $S_{n,T}=\xi_n^2/(2t_n\sqrt{\log T})$).

To prove the limit in~\eqref{eq:stepthreeeq2}, first note that $|a^2-b^2|\le (a+b)|a-b|$ for $a,b\ge 0$, implying 
\begin{align*}
&\hspace{16mm}
\Big|\E_\ell\Big[\tfrac{1}{2}t_n^{-1}\xi_n^2
    \1_{\{\xi_n^2\le 2t_n\epsilon\sqrt{\log T}\}}\Big]^2
-\E_\ell\Big[\tfrac{1}{2}\sigma_{t_n}^2Z^2
    \1_{\{\sigma_{t_n}^2Z^2\le 2\epsilon\sqrt{\log T}\}}\Big]^2
    \Big|\\
&\hspace{32mm}
\le 2\epsilon\sqrt{\log T}
\Big|\E_\ell\Big[\tfrac{1}{2}t_n^{-1}\xi_n^2
    \1_{\{\xi_n^2\le 2t_n\epsilon\sqrt{\log T}\}}\Big]
-\E_\ell\Big[\tfrac{1}{2}\sigma_{t_n}^2Z^2
    \1_{\{\sigma_{t_n}^2Z^2\le 2\epsilon\sqrt{\log T}\}}\Big]
\Big|.
\end{align*} 
Thus, by applying~\eqref{eq:dctfordiff} with $\gamma=1$ and $\gamma=2$ and $r=2\epsilon$, we find (all limits are taken in $L^1$): 
\begin{align*}
\lim_{T\to\infty} \frac{1}{\log T}
    \sum_{n\in \GI_T}\Var_\ell\Big(\tfrac{1}{2}t_n^{-1}\xi_n^2
    \1_{\{\xi_n^2
        \le 2t_n\epsilon\sqrt{\log T}\}}\Big)
&=\lim_{T\to\infty} \frac{1}{\log T}
    \sum_{n\in \GI_T}\Var_\ell\Big(\tfrac{1}{2}\sigma_{t_n}^2Z^2
    \1_{\{\sigma_{t_n}^2Z^2
        \le 2\epsilon\sqrt{\log T}\}}\Big)\\
=\lim_{T\to\infty }\frac{1}{\log T}
    \sum_{n\in \GI_T}\Var_\ell\Big(\tfrac{1}{2}\sigma_{t_n}^2Z^2\Big)
&=\lim_{T\to\infty}\frac{1}{2\log T}\bigg(
    \sigma^4|\GI_T|
    +\sum_{n\in \GI_T}(\sigma_{t_n}^4-\sigma^4)\bigg)
=\frac{\sigma^4}{2},
\end{align*} 
where the first equality in the second line follows from the fact that $\Var(Z^2)=2$ and the last equality in the same line follows from 
Proposition~\ref{prop:sizeof_n_T} and Corollary~\ref{cor:int_SB_tauT}(b) applied to the bounded function $t\mapsto\sigma_t^4-\sigma^4$ with zero limit as $t\to\infty$. This establishes~\eqref{eq:stepthreeeq2} since  $S_{n,T}=\xi_n^2/(2t_n\sqrt{\log T})$.

It remains to prove~\eqref{eq:stepthreeeq3}. Markov's inequality, the equality $\E[Z^2]=1$ and Proposition~\ref{prop:sizeof_n_T} imply
\begin{align*}
&\frac{1}{\sqrt{\log T}} \sum_{n\in \GI_T}\bigg|
    \E_\ell\Big[\tfrac{1}{2}\sigma_{t_n}^2Z^2
        \1_{\{\sigma_{t_n}^2Z^2\le 2\epsilon\sqrt{\log T}\}}\Big] 
    -\frac{\sigma_{t_n}^2}{2}\bigg|\\
&\qquad\qquad
=\frac{1}{\sqrt{\log T}} 
    \sum_{n\in \GI_T}\E_\ell\Big[ 
    \tfrac{1}{2}\sigma_{t_n}^2Z^2\1_{\{\sigma_{t_n}^2Z^2>2\epsilon \sqrt{\log T}\}}\Big]
\le\frac{1}{\sqrt{\log T}}\sum_{n\in \GI_T}
    \frac{\E_\ell[\sigma_{t_n}^8Z^8]}{(2\epsilon\sqrt{\log T})^3}\\
&\qquad\qquad
=\frac{1}{8\epsilon^3\log^2 T}\sum_{n\in \GI_T}
    \sigma_{t_n}^8\E[Z^8]
\le\frac{105\sigma^8}{8\epsilon^3\log^2 T} |\GI_T|
\ciL 0.
\end{align*}
The display above and~\eqref{eq:dctfordiff} with $\gamma=1$ and $r=2\epsilon$ imply~\eqref{eq:stepthreeeq3}, completing the proof. 
\end{proof}

\begin{proof}[Proof of Theorem~\ref{thm:maintheorem1}]
\label{proof:maintheorem1}
The proof of Theorem~\ref{thm:maintheorem1} consists of several steps.

\textbf{Step 1.}
In this step we show that~\eqref{eq:CLT-1} follows from the limits in~\eqref{eq:quintuple-Billingsley} below. By~\cite[Thm~12]{CM_Fluctuation_Levy}, Lemma~\ref{lem:face_discrepancy} and Proposition~\ref{prop:Upsilon-approx}, the weak limit in~\eqref{eq:CLT-1} of Theorem~\ref{thm:maintheorem1} is equivalent to the following limit as $T\to\infty$:
\begin{equation}
\label{eq:quintuple-conv}
\zeta_T
\coloneqq\left(\frac{\sum_{n\in\GI_T} (\xi_n^2/t_n-\sigma_{t_n}^2)}{2\sqrt{\log T}},
\frac{|\GI_T|-\log T}{\sqrt{\log T}},
\frac{\sum_{n=1}^\infty \xi_n^+}{\sqrt{T}},
\frac{\sum_{n=1}^\infty \xi_n}{\sqrt{T}},
\frac{\sum_{n=1}^\infty t_n\1_{\{\xi_n>0\}}}{T}
\right)
\cid \zeta,
\end{equation}
where $\zeta=(\sigma^2Z_1/\sqrt{2},Z_2,\sigma\ov{B}_1, \sigma B_1,\rho)$, the standard Brownian motion $B$, the stick-breaking process $\ell$ and the standard normal variables $Z_1$ and $Z_2$ are all independent. 

Define $\eta_n\coloneqq \xi_n/\sqrt{t_n}$ for $n\in\N$ and note that
\[
\zeta_T
=\left(\frac{\sum_{n\in\GI_T} (\eta_n^2-\sigma_{t_n}^2)}{2\sqrt{\log T}},
\frac{|\GI_T|-\log T}{\sqrt{\log T}},
\sum_{n=1}^\infty \sqrt{\ell_n}\eta_n^+,
\sum_{n=1}^\infty \sqrt{\ell_n}\eta_n,
\sum_{n=1}^\infty \ell_n\1_{\{\eta_n>0\}}
\right).
\]
Let $W_1,W_2\ldots$ be an iid sequence of standard normal random variables independent of $\ell$, $Z_1$ and $Z_2$. For any $k\in\N$ and $T>1$ define the random variables
\begin{gather*}
\chi_{k,T}
\coloneqq\left(\frac{\sum_{n=k}^\infty (\eta_n^2-\sigma_{t_n}^2)\1_{\{t_n\ge 1\}}}{2\sqrt{\log T}},
\frac{\sum_{n=k}^\infty\1_{\{t_n\ge 1\}}-\log T}{\sqrt{\log T}},
\sum_{n=1}^{k-1} \sqrt{\ell_n}\eta_n^+,
\sum_{n=1}^{k-1} \sqrt{\ell_n}\eta_n,
\sum_{n=1}^{k-1} \ell_n\1_{\{\eta_n>0\}}
\right),\\
\chi_{k}
\coloneqq\left(\frac{\sigma^2}{\sqrt{2}}Z_1,
Z_2,
\sum_{n=1}^{k-1} \sqrt{\ell_n}\sigma W_n^+,
\sum_{n=1}^{k-1} \sqrt{\ell_n}\sigma W_n,
\sum_{n=1}^{k-1} \ell_n\1_{\{\sigma W_n>0\}}
\right).
\end{gather*}
By~\cite[Thm~3.2]{BillingsleyConvergence},~\eqref{eq:quintuple-conv} will follow if we prove that the following limits hold:
\begin{equation}
\label{eq:quintuple-Billingsley}
\text{(a)}~\chi_{k,T}\xrightarrow[T\to\infty]{d}\chi_k,
\quad
\text{(b)}~\chi_k\xrightarrow[k\to\infty]{d}\zeta,
\quad
\text{(c)}~\lim_{k\to\infty}\limsup_{T\to\infty}
\p(\|\chi_{k,T}-\zeta_T\|>\epsilon)=0,
\enskip\text{for all }\epsilon>0,
\end{equation}
where $\|x\|=\sum_{i=1}^d |x_i|$ denotes the $\ell^1$-norm in $\R^d$, $d\ge 1$. 

\textbf{Step 2.} In this step we establish~\hyperref[eq:quintuple-Billingsley]{(23a)}. Define $\ell^{(k)}\coloneqq(\ell_1,\ldots,\ell_{k-1})$. To prove~\hyperref[eq:quintuple-Billingsley]{(23a)}, it suffices to show that $\E[\phi(\chi_{k,T})|\ell^{(k)}]\to \E[\phi(\chi_k)|\ell^{(k)}]$ a.s. as $T\to\infty$ for any continuous and bounded function $\phi:\R^5\to\R$. With this in mind, denote by $\p^{(k)}$ the conditional probability measure $\p$ given $\ell^{(k)}$.

Under $\p^{(k)}$, the process $(\ell_k,\ell_{k+1},\ldots)$ is a uniform stick-breaking process on $[0,L_{k-1}]$ independent of the variables $(\eta_n)_{n<k}$. Thus the first two coordinates of $\chi_{k,T}$ independent under $\p^{(k)}$ of the last three coordinates. Moreover, since $X_t/\sqrt{t}\cid \sigma Z_1$ as $t\to\infty$, then, under $\p^{(k)}$, we have $(\eta_1,\ldots,\eta_{k-1}) =(\xi_1/\sqrt{t_1},\ldots,\xi_{k-1}/\sqrt{t_{k-1}}) \cid(\sigma W_1,\ldots,\sigma W_{k-1})$ as $T\to\infty$ (recall that $t_n=T\ell_n$). Thus, to prove~\hyperref[eq:quintuple-Billingsley]{(23a)}, it suffices to show that the first two coordinates of $\chi_{k,T}$ converge weakly to the first two coordinates of $\chi_k$ under $\p^{(k)}$. 

Recall that, under $\p^{(k)}$, the process $(\ell_k,\ell_{k+1},\ldots)$ is a uniform stick-breaking process on $[0,L_{k-1}]$ and $\sum_{n=k}^\infty t_n=TL_{k-1}$. Thus, Proposition~\ref{prop:sizeof_n_T} implies that 
\[
\frac{\sum_{n=k}^\infty\1_{\{t_n\ge 1\}}-\log (TL_{k-1})}{\sqrt{\log (TL_{k-1})}}
\cid Z_2,
\quad \text{as $T\to\infty$ under $\p^{(k)}$.}
\]
Since $\log(TL_{k-1})=\log T+\log L_{k-1}$, where $L_{k-1}$ is deterministic under $\p^{(k)}$, then
\[
M_T\coloneqq\frac{\sum_{n=k}^\infty\1_{\{t_n\ge 1\}}-\log T}{\sqrt{\log T}}
\cid Z_2,
\quad \text{as $T\to\infty$ under $\p^{(k)}$.}
\]
Moreover, since $\p^{(k)}(\cdot|\ell)=\p(\cdot|\ell)$, Proposition~\ref{prop:conditional_CLT} implies that $\p^{(k)}(\Sigma_T\le x|\ell)\ciL\p(\sigma^2Z_1/\sqrt{2}\le x)$ for all $x\in\R$ as $T\to\infty$, where $\Sigma_T$ is as in~\eqref{eq:Upsilon-approx}. Denote by $\E^{(k)}$ the expectation under $\p^{(k)}$. Thus, taking limits in the following identity 
\begin{align*}
\E^{(k)} [\1_{\{M_T\le y\}}\p^{(k)}(\Sigma_T\le x|\ell)]
&=
\p^{(k)}(M_T\le y)\p^{(k)}(\sigma^2Z_1/2\le x)\\
&\qquad+\E^{(k)}[\1_{\{M_T\le y\}}
(\p^{(k)}(\Sigma_T\le x|\ell)-\p^{(k)}(\sigma^2Z_1/\sqrt{2}\le x))],
\end{align*}
implies that $\p^{(k)}(M_T\le y,\Sigma_T\le x)\to \p^{(k)}(Z_2\le y)\p^{(k)}(\sigma^2Z_1/\sqrt{2}\le x)$ as $T\to\infty$.
To see that the first two coordinates of $\chi_{k,T}$ converge weakly to the first two coordinates of $\chi_k$ under $\p^{(k)}$, it suffices to note that $\E^{(k)}\sum_{n=1}^{k-1} |\eta_n^2-\sigma_{t_n}^2|/\sqrt{\log T}
\le 2(k-1)\sigma^2/\sqrt{\log T}\to 0$ as $T\to\infty$. 

\textbf{Step 3.}
In this step we establish~\hyperref[eq:quintuple-Billingsley]{(23b)}--\hyperref[eq:quintuple-Billingsley]{(23c)}. To prove~\hyperref[eq:quintuple-Billingsley]{(23b)}, it suffices to show the convergence for the last three coordinates. Note that 
\[
\sum_{n=1}^{k-1}\big(\sqrt{\ell_n}\sigma W_n^+, \sqrt{\ell_n}\sigma W_n, \ell_n\1_{\{\sigma W_n>0\}}\big)
\xrightarrow[k\to\infty]{a.s.}
\sum_{n=1}^\infty(\sqrt{\ell_n}\sigma W_n^+, \sqrt{\ell_n}\sigma W_n, \ell_n\1_{\{\sigma W_n>0\}}),
\]
where the limit has the same law as $(\sigma \ov B_1,\sigma B_1,\rho)$ by the scaling property of Brownian motion and~\eqref{eq:SB-representation} applied to $\sigma B$, implying~\hyperref[eq:quintuple-Billingsley]{(23b)}.

By Markov's inequality,  \hyperref[eq:quintuple-Billingsley]{(23c)} will follow if we prove that $\lim_{m\to\infty}\limsup_{T\to\infty} \E\|\chi_{k,T}-\zeta_T\|=0$. Moreover, the previous limit is a simple consequence of the following limits
\begin{align*}
\limsup_{T\to\infty}\frac{\E\sum_{n=1}^{k-1} |\eta_n^2-\sigma_{t_n}^2|\1_{\{t_n\ge 1\}}}{2\sqrt{\log T}}
=0,&&
\limsup_{T\to\infty}\frac{\E\sum_{n=1}^{k-1} \1_{\{t_n\ge 1\}}}{2\sqrt{\log T}}
=0,&\\
\lim_{k\to\infty}\limsup_{T\to\infty}\E\sum_{n=k}^\infty\sqrt{\ell_n}|\eta_n|
=0,&&
\lim_{k\to\infty}\E\sum_{n=k}^\infty\ell_n
=0.&
\end{align*}
The first two limits in the display are obvious. The fourth limit holds since $\sum_{n=k}^\infty\ell_k=L_{k-1}$ and $\E L_{k-1}=2^{1-k}$. Finally, the third limit in the display above follows from the bounds
\[
\E\sum_{n=k}^\infty\sqrt{\ell_n}|\eta_n|
\le\sum_{n=k}^\infty\E[\sqrt{\ell_n}\E_\ell\big[\eta_n^2]^{1/2}\big]
=\sigma\sum_{n=k}^\infty\E\sqrt{\ell_n}
=\sigma\sum_{n=k}^\infty(2/3)^n
=3\sigma (2/3)^{k},
\]
implying~\hyperref[eq:quintuple-Billingsley]{(23c)} and completing the proof.

\end{proof}

\begin{proof}[Proof of Corollary~\ref{cor:2nd-moment}]
By Theorem~\ref{thm:maintheorem1}, it suffices to prove the claims on $\int_\R x^2\log^+(\min\{T, x^2\})\nu(dx)$. Since $x^2\log^+(\min\{T, x^2\})/\log T$ tends to $0$ pointwise on $x$ as $T\to\infty$ and is upper bounded by the $\nu$-integrable function $x\mapsto x^2$, the dominated convergence theorem implies that the integral is $\oh(\log T)$. Similarly, the integral is $\oh(\sqrt{\log T})$ if $x\mapsto x^2(\log^+|x|)^{1/2}$ is $\nu$-integrable.
\end{proof}

\section{Stable domain of attraction}
\label{sec:stable_domain}
This section is dedicated to proving Theorems~\ref{thm:Theorem2}, \ref{thm:Theorem4} and~\ref{thm:Theorem3}, stated in Section~\ref{sec:intro}. Assume that the limit in~\eqref{eq:attract} holds for some $\alpha\in(0,2]\setminus\{1\}$. Recall that this is equivalent to
\begin{equation}
\label{eq:attraction}
(X_{tT}/a_T)_{t\in[0,1]} 
    \cid (S_\alpha(t))_{t\in[0,1]},
\qquad\text{as }T\to\infty,
\end{equation} 
in the Skorokhod space $\mathcal{D}[0,1]$ equipped with the $J_1$-topology~\cite[Ch.~3]{BillingsleyConvergence}, where $a_T$ is as in~\eqref{eq:attract}. Since $a_T\to\infty$ as $T\to\infty$, we assume without loss of generality that $a_T>1$ is locally bounded for all $T\ge 1$. The following lemma provides a key step in the proofs of Theorems~\ref{thm:Theorem2} and~\ref{thm:Theorem4}. 

\begin{lemma}
\label{lem:stable_mom}
Suppose a L\'evy process $X$ satisfies~\eqref{eq:attraction} for some $\alpha\in(0,2]$. Then, for every $p\in[0,\alpha)$, there exists a constant $C_p\in(0,\infty)$ such that $\E[|X_t/a_t|^p]\le C_p$ for all $t\ge 1$. 
\end{lemma}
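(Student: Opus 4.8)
The statement to prove is Lemma~\ref{lem:stable_mom}: if $X$ satisfies the functional convergence~\eqref{eq:attraction} with norming $a_T$, then for every $p\in[0,\alpha)$ the normalized moments $\E[|X_t/a_t|^p]$ are bounded uniformly over $t\ge 1$. The key point is that weak convergence $X_t/a_t \cid S_\alpha(1)$ alone does not give convergence of $p$-th moments — we need *uniform integrability* of the family $\{|X_t/a_t|^p : t\ge 1\}$, which for $p$ strictly below the stability index is the natural expectation.

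**Main steps.** The plan is as follows. First I would recall that $a_T = T^{1/\alpha}\ell(T)$ with $\ell$ slowly varying, and in particular that $t\mapsto a_t$ is regularly varying of index $1/\alpha$; by Potter's bounds, for any $\delta>0$ there are constants so that $a_t/a_s$ is controlled by $(t/s)^{1/\alpha\pm\delta}$ for $t,s$ large, which lets me compare $a_t$ at different scales. Second, I would use the standard tail estimate for processes in a stable domain of attraction: the domain-of-attraction condition~\eqref{eq:attract} is equivalent to $\p(|X_1|>x)$ being regularly varying of index $-\alpha$ (together with a tail-balance condition), or equivalently to the Lévy measure $\nu$ having regularly varying tails of index $-\alpha$; more usefully, there is a maximal inequality of the form $\p(\sup_{s\le t}|X_s| > x a_t) \le g(x)$ with $g(x)\to 0$ as $x\to\infty$ uniformly in $t\ge 1$, which follows from~\eqref{eq:attraction} combined with a Skorokhod-type maximal inequality (e.g.\ the Etemadi or Ottaviani inequality for Lévy processes, or simply the convergence $\sup_{s\le 1}|X_{st}|/a_t \cid \sup_{s\le 1}|S_\alpha(s)|$ which gives tightness of the suprema). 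Third, with such a uniform tail bound $\p(|X_t/a_t| > x)\le g(x)$ for all $t\ge 1$, I write $\E[|X_t/a_t|^p] = \int_0^\infty p x^{p-1}\p(|X_t/a_t|>x)\,dx \le \int_0^1 px^{p-1}\,dx + p\int_1^\infty x^{p-1} g(x)\,dx$, and the second integral is finite precisely because $g(x) = O(x^{-\alpha+\epsilon})$ for small $\epsilon$ (regular variation of index $-\alpha$) and $p<\alpha$. This bound is independent of $t$, giving the constant $C_p$.

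**Where the real work is.** The main obstacle is establishing the *uniform-in-$t$* tail bound $\sup_{t\ge 1}\p(|X_t|>x a_t)\le g(x)$ with $g(x)\to 0$. Pointwise in $t$ this is immediate from weak convergence plus a diagonal/continuity argument, but uniformity requires either (i) an explicit comparison of $\p(|X_t|>xa_t)$ with $t\,\p(|X_1|>xa_t)$ — valid for $x$ large by the Lévy–Itô decomposition and a standard estimate $\p(|X_t|>y)\asymp t\bar\nu(y)$ for $y$ large relative to the typical scale — combined with regular variation of $\bar\nu$ and of $a_t$, i.e.\ $t\bar\nu(xa_t)\sim t (xa_t)^{-\alpha}\ell_1(xa_t)\to c x^{-\alpha}$ uniformly; or (ii) a compactness argument: the laws of $X_t/a_t$ converge, hence are tight, but tightness of a *convergent* sequence indexed by a continuum needs the convergence to be locally uniform in $t$, which one gets from~\eqref{eq:attraction} by noting $X_{st}/a_t$ as a process in $s$ converges, so in particular $X_t/a_t = (X_{st}/a_t)|_{s=1}$ converges and the whole family $\{X_t/a_t\}_{t\ge 1}$ is relatively compact, hence tight, hence $\sup_{t\ge1}\p(|X_t/a_t|>x)\to 0$. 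Approach (ii) is cleaner and is what I would write: use the functional convergence to deduce that $\{\mathcal L(X_t/a_t):t\ge 1\}$ is tight (every sequence $t_n\to\infty$ has $X_{t_n}/a_{t_n}\cid S_\alpha(1)$; for $t_n$ in a bounded set, continuity of $t\mapsto\mathcal L(X_t/a_t)$ and local boundedness of $a_t$ suffice), and then upgrade tightness to the quantitative regularly-varying tail bound using that the limit $S_\alpha(1)$ has tails of index exactly $-\alpha$ together with a uniform version of the tail equivalence — this last quantitative step is the only nontrivial ingredient and can be cited from the domain-of-attraction literature (e.g.\ the classical characterization that~\eqref{eq:attract} forces $\p(|X_t|>x) \sim c\, t\, x^{-\alpha}\ell_1(x)$ with the asymptotics uniform after the $a_t$-rescaling).
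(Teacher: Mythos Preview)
Your approach differs substantially from the paper's. The paper does not go through tail estimates at all: it writes $\E[|X_t|^p]\le 2^{(p-1)^+}\big(\E[|X_{\lfloor t\rfloor}|^p]+\E[|X_{t-\lfloor t\rfloor}|^p]\big)$, bounds the first term by citing Ibragimov--Linnik (Lem.~5.2.2) for the uniform bound on $\E[|X_n/a_n|^p]$ along the integers $n\in\N$ (together with $a_t/a_{\lfloor t\rfloor}\to 1$ from regular variation), and bounds the second term, where $t-\lfloor t\rfloor\in[0,1]$, via the supremum/infimum moment estimate $\E[|X_s|^p]\le\E[\ov X_s^p]+\E[|\un X_s|^p]$ and a reference to a moment bound for L\'evy processes on bounded intervals. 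The whole lemma thus reduces to two citations and an elementary decomposition.

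Your route via uniform tail bounds has a real gap. Tightness of $\{X_t/a_t:t\ge 1\}$ gives only $\sup_{t\ge 1}\p(|X_t/a_t|>x)\to 0$ with no rate; the ``upgrade'' to $g(x)=O(x^{-\alpha+\epsilon})$ that you flag as the one nontrivial ingredient to be cited is precisely the substance of the lemma, and the cleanest citation for it \emph{is} the moment bound itself (the Ibragimov--Linnik lemma the paper invokes directly). Moreover, your approach~(i), which relies on $\p(|X_1|>x)$ being regularly varying of index $-\alpha$, fails when $\alpha=2$: the domain of normal attraction permits arbitrarily light tails (e.g.\ finite variance, or even Gaussian $X$ with $\nu=0$), so neither $\p(|X_1|>x)$ nor $\bar\nu(x)$ need be regularly varying of index $-2$, and the estimate $t\bar\nu(xa_t)\to c x^{-\alpha}$ is unavailable. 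The paper's integer/fractional decomposition sidesteps both issues entirely.
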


\begin{proof}
By the the concavity of $x\mapsto x^p$ (when $p\in[0, 1]$) and Jensen's inequality (when $p\in(1,\alpha)$), we have $(a+b)^p\le 2^{(p-1)^+}(a^p+b^p)$ for any $a,b\ge 0$. Thus, $\E[|X_t|^p]\le 2^{(p-1)^+}(\E[|X_{\floor{t}}|^p]+\E[|X_{t-\floor{t}}|^p])$ for all $t\ge 1$, where $\floor{t}\coloneqq \sup\{m\in\N:m\le t\}$. By~\cite[Lem.~5.2.2]{IbragimovLinnikBook}, $\E[|X_n/a_n|^p]$ is bounded for all $n\in\N$. By the regular variation of $a_t\ge 1$, we have 
\[
1\le\liminf_{t\to\infty}\frac{a_t}{a_{\floor{t}}}
\le\limsup_{t\to\infty}\frac{a_t}{a_{\floor{t}}}
\le\limsup_{t\to\infty}\frac{a_t}{a_{ct}}=c^{-1/\alpha},
\]
for any $c\in(0,1)$, implying $a_t/a_{\floor{t}}\to 1$ as $t\to\infty$. Thus, it suffices to show that $\E[|X_s|^p]$ is bounded for $s\in[0,1]$. This bound follows directly from~\cite[Lem.~2]{LevySupSim} and the inequality $\E[|X_s|^p]\le\E[\ov{X}_s^p] +\E[|\un{X}_s|^p]$ implied by $|X_s|^p\leq \max\{\ov{X}_s^p,|\un{X}_s|^p\}$. 
\end{proof}

\begin{remark}\nf
An explicit upper bound in Lemma~\ref{lem:stable_mom} can be obtained in terms of the characteristics of $X$ and the regularly varying function $a_t$ by using methods analogous to the ones in the proof of~\cite[Lem.~2]{LevySupSim}. Since the explicit value of the upper bound $C_p$ is not important in our context, we only provide the short proof above. 
\end{remark}

\subsection{The case of finite mean}

\begin{proof}[Proof of Theorem~\ref{thm:Theorem2}]
Recall $\p_\ell(\cdot)=\p(\cdot|\ell)$ and $\E_\ell[\cdot]=\E[\cdot|\ell]$, where $\ell$ is the stick-breaking process on $[0,1]$, and $t_n=T\ell_n$. 
Denote $\eta_n\coloneqq\xi_n/a_{t_n}$ and $\varrho_n\coloneqq a_{t_n}/a_T$ for $n\in\N$ and note that $\sqrt{t_n^2+\xi_{n}^2}-t_n =\xi_{n}^2/(t_n+\sqrt{t_n^2+\xi_{n}^2})$. Thus, by~\eqref{eq:SB-representation}, we have
\[
\left(
\frac{\Upsilon_T^\frown-T}{a_T^2/T},
\frac{\ov C_T^\frown}{a_T},
\frac{C_T^\frown(T)}{a_T},
\frac{\gamma_T^\frown}{T} 
\right)
\eqd
\sum_{n=1}^\infty\bigg(
\frac{\varrho_n^2\eta_n^2}{\ell_n+\sqrt{\ell_n^2+\varrho_n^2\eta_{n}^2a_T^2/T^2}},\varrho_n\eta_{n}^+,\varrho_n\eta_{n},
\ell_n\1_{\{\varrho_n\eta_{n}>0\}} 
\bigg).
\]
By~\cite[Thm~3.2]{BillingsleyConvergence}, \eqref{eq:Theorem1.2convergence} will follow if we prove the following limits: for any $k\in\N$ we have 
\begin{equation}
\label{reducedconvergencestable}
\begin{split}
&\sum_{n=1}^{k-1}\bigg(
\frac{\varrho_n^2\eta_n^2}{\ell_n+\sqrt{\ell_n^2+\varrho_n^2\eta_{n}^2a_T^2/T^2}},\varrho_n\eta_{n}^+,\varrho_n\eta_{n},
\ell_n\1_{\{\varrho_n\eta_{n}>0\}} 
\bigg) \\  
&\qquad\xrightarrow[T\to\infty]{d}
\sum_{n=1}^{k-1} \bigg(
\frac{1}{2}\ell_n^{2/\alpha-1}\big(S_\alpha^{(n)}\big)^2,
\ell_n^{1/\alpha} \big( S_\alpha^{(n)}\big)^+,
\ell_n^{1/\alpha} S_\alpha^{(n)},
\ell_n\1_{\{S_\alpha^{(n)}>0\}}
\bigg),
\end{split}
\end{equation} 
and, for all $\epsilon>0$, 
\begin{equation}
\label{limreducedlimitstable}
\begin{gathered}
\lim_{k\to\infty}\limsup_{T\to\infty}
    \p\bigg(\sum_{n=k}^\infty\big\|\big(
R_n,\varrho_n\eta_{n}^+,\varrho_n\eta_{n},
\ell_n\1_{\{\varrho_n\eta_{n}>0\}} 
\big)\big\|>\epsilon\bigg)
=0,\\
\text{where}\quad
R_n\coloneqq\frac{\varrho_n^2\eta_n^2}{\ell_n+\sqrt{\ell_n^2+\varrho_n^2\eta_n^2a_T^2/T^2}},
\end{gathered}
\end{equation}
and $\|x\|=\sum_{i=1}^d |x_i|$ denotes the $\ell^1$-norm in $\R^d$, $d\ge 1$. 

To prove~\eqref{reducedconvergencestable}, it suffices to show that the weak convergence holds conditional on $\ell$. By assumption, we have $X_t/a_t\cid S_\alpha^{(1)}$, $a_{ct}/a_t\to c^{1/\alpha}$ and $a_t/t\to 0$ as $t\to\infty$. Thus, given $\ell$, the random variables $\eta_1,\ldots,\eta_k$ are independent and we have the following convergences as $T\to\infty$: 
$(\eta_1,\ldots,\eta_k) \cid (S_\alpha^{(1)},\ldots,S_\alpha^{(k)})$, $(\varrho_1,\ldots,\varrho_k) \to(\ell_1^{1/\alpha},\ldots,\ell_k^{1/\alpha})$ and $a_T/T\to 0$. The continuous mapping theorem then yields the weak convergence in~\eqref{reducedconvergencestable} conditional on $\ell$. 

Next we prove~\eqref{limreducedlimitstable}. Note that $\sum_{n=k}^\infty \ell_k=L_{k-1}$ and $\p(L_{k-1}>\epsilon)\le \epsilon^{-1}\E L_{k-1}\to 0$ as $k\to\infty$, so it suffices to show that, for all $\epsilon>0$, the following limits hold as $k\to\infty$:
\begin{equation}
\label{eq:tightness_alpha}
\limsup_{T\to\infty}
\p\left(\sum_{n=k}^\infty R_n >\epsilon\right)
\to0,
\quad
\limsup_{T\to\infty}
\p\left(\sum_{n=k}^\infty\varrho_n|\eta_n|    >\epsilon\right)
\to0.
\end{equation} 
We will prove both limits via Markov's inequality $\p(|\zeta|>\epsilon)\le \epsilon^{-p}\E[|\zeta|^p]$ for $p>0$, and bounding the first moment by splitting the summation over the sets $\GI_T$ and $\GI_T^\co$ (recall that $\GI_T=\{n\in\N:t_n\ge 1\}$). First note that $R_n\le |\xi_n|(T/a_T^2)$ and $\rho_n|\eta_n|=|\xi_n|/a_T$, where $a_T\to\infty$ and $a_T^2/T\to\infty$ as $T\to\infty$. There exists a constant $K$ such $\E[|X_t|]\le K\sqrt{t}$ for all $t\le1$ (see, e.g.~\cite[Lem.~2]{LevySupSim}), so Corollary~\ref{cor:int_SB_tauT}(a) yields
\begin{align*}
&\limsup_{T\to\infty}
\E\sum_{n\in\GI_T^\co, n\ge k}
    \varrho_n\E_\ell|\eta_n|
\le \limsup_{T\to\infty}
\frac{K}{a_T}\E\sum_{n\in\GI_T^\co} \ell_n^{1/2}
=\limsup_{T\to\infty}\frac{2K}{a_T}=0,
\quad\text{and}\\
&\limsup_{T\to\infty}
\E\sum_{n\in\GI_T^\co, n\ge k}R_n
\le \limsup_{T\to\infty}
\frac{KT}{a_T^2}\E\sum_{n\in\GI_T^\co} \ell_n^{1/2}
=\limsup_{T\to\infty}\frac{2KT}{a_T^2}=0.
\end{align*}

It remains to consider the summation sets $\GI_T\cap\{k,k+1,\ldots\}$. 
By Lemma~\ref{lem:stable_mom}, for any $p\in(0,\alpha)$, we have $\E_\ell[|\eta_n|^p]\le C_p$ for some $C_p>0$.
Since $t\mapsto a_t$ is regularly varying at infinity with index $1/\alpha$, Potter's theorem~\cite[Thm~1.5.6]{BinghamBook} implies that for all $q\in(0,1/\alpha)$ there exists a constant $C'_{q}>0$ such that $a_s/a_t\le C'_{q}(s/t)^{q}$ for all $t>s\ge 1$. Thus, the second limit in~\eqref{eq:tightness_alpha} follows from the limit
\[
\limsup_{T\to\infty}
\E\sum_{n\in\GI_T, n\ge k}
    \varrho_n\E_\ell|\eta_n|
\le C_1C'_{1/2}
\sum_{n=k}^\infty \E[\ell_n^{1/2}]
=3C_1C'_{1/2}(2/3)^{k-1}
\xrightarrow[k\to\infty]{}0.
\]

Next note that $R_n\le\varrho_n^2\eta_n^2/\ell_n$ and fix any $p\in(0,\alpha/2)$ and $q\in(1/2,1/\alpha)$. By Markov's inequality and the subadditivity of $x\mapsto x^p$, the first limit in~\eqref{eq:tightness_alpha} follows from
\[
\limsup_{T\to\infty}\E\sum_{n\in\GI_T,n\ge k}R_n^p
\le C_{2p}(C'_{q})^{2p}\sum_{n=k}^\infty\E[\ell_n^{p(2q-1)}]
=\frac{C_{2p}(C'_{q})^{2p}(1+p(2q-1))^{1-k}}{p(2q-1)}
\xrightarrow[k\to\infty]{}0.
\qedhere
\]
\end{proof}

The asymptotic equivalence $f(x)\sim g(x)$ as $x\to\infty$ is defined as $\lim_{x\to\infty}f(x)/g(x)=1$. 

\begin{proof}[Proof of Proposition \ref{prop:tailbehaviour}]
Note that the random variable $Q\coloneqq \frac{1}{2}\sum_{n=1}^\infty \ell_n^{2/\alpha-1}(S_\alpha^{(n)})^2$ satisfies
\[
2Q
=\ell_1^{2/\alpha-1}(S_\alpha^{(1)})^2+\sum_{i=2}^\infty \ell_n^{2/\alpha-1}(S_\alpha^{(n)})^2 
= \ell_1^{2/\alpha-1}(S_\alpha^{(1)})^2+L_1^{2/\alpha-1}\sum_{i=2}^\infty \left(\frac{\ell_n}{L_1}\right)^{2/\alpha-1}(S_\alpha^{(n)})^2.
\] 
Let $A\coloneqq L_1^{2/\alpha-1}$, 
$B\coloneqq\frac{1}{2}\ell_1^{2/\alpha-1}(S_\alpha^{(1)})^2$ and $Q'\coloneqq\frac{1}{2}\sum_{i=2}^\infty 
(\ell_n/L_1)^{2/\alpha-1}(S_\alpha^{(n)})^2$ and note that $Q=AQ'+B$. Since $(\ell_n/L_1)_{n\ge 2}$ is a stick-breaking process on $[0,1]$ independent of $L_1$ and $S_\alpha^{(1)}$, we conclude that $Q'\eqd Q$ is independent of $(A,B)$. 

By~\cite[Thm~2.4.3]{PerpetuityBook} it follows that $\p(Q>x)\sim(1-\E[A^{\alpha/2}])^{-1}\p(B>x)$, as $x\to\infty$. Furthermore, by~\cite[Lem.~B.5.1]{PerpetuityBook}, we have 
\begin{align*}
 \p(B>x)\sim 
 \E\big[\big(\tfrac{1}{2}\ell_1^{2/\alpha-1} 
    \big)^{\alpha/2}\big] 
 \p((S_\alpha^{(1)})^2>x), 
 \quad\text{ as }
 x \to \infty.
\end{align*} 
Recall that $L_1 =1-\ell_1\sim U(0,1)$. Similarly, we have that $\ell_1\sim U(0,1)$. Thus, it follows that
\begin{align*}
\big( 1-\E [A^{\alpha/2}]\big)^{-1}
    \E\big[\big(\tfrac{1}{2}\ell_1^{2/\alpha-1} 
        \big)^{\alpha/2}\big]
&=2^{-\alpha/2}\big(1-\E\big[V_1^{1-\alpha/2}
    \big]\big)^{-1}
\E\big[V_1^{1-\alpha/2}\big]\\
&=2^{-\alpha/2}\Big( 1-\frac{2}{4-\alpha}\Big)^{-1}
\frac{2}{4-\alpha}
=\frac{2^{1-\alpha/2}}{2-\alpha}.
\end{align*} 
Thus we have $\p(Q>x)\sim 2^{1-\alpha/2} \p ((S_\alpha^{(1)})^2>x)/(2-\alpha)$, as $x\to\infty$. The last asymptotic equivalence in Proposition~\ref{prop:tailbehaviour} follows from the identity $\p((S_\alpha^{(1)})^2>x)
=\p (S_\alpha^{(1)}>\sqrt{x})+\p (-S_\alpha^{(1)}>\sqrt{x})$.
\end{proof}

\begin{proof}[Proof of Theorem~\ref{thm:Theorem4}]
(a) Assume $\mu >0$. We assume without loss of generality that $t\mapsto a_t$ is continuous and $a_t\ge 1$ for all $t>0$. Define 
\[
Z_T\coloneqq \bigg(\frac{\mu}{\sqrt{1+\mu^2}},1,1\bigg)
\frac{X_T-\mu T}{a_T}
\qquad\text{and}\qquad 
Z'_T\coloneqq \frac{1}{a_T}
    \left(\Upsilon_T^\frown -\sqrt{1+\mu^2}T,X_T-\mu T,\ov{X}_T-\mu T\right).
\]
Since $Z_T\cid(\mu/\sqrt{1+\mu^2},1,1)S_\alpha(1)$ as $T\to\infty$, it suffices to show that $\|Z_T-Z'_T\|\cip 0$ as $T \to \infty$. Define 
\[
\Delta_T
\coloneqq \Upsilon_T^\frown 
    -\sqrt{1+\mu^2}T
    -\frac{\mu}{\sqrt{1+\mu^2}}(X_T-\mu T),
\qquad T>0.
\] 
Note that $|\un{X}_T/a_T|\cip 0$ as $T\to\infty$ since the positive drift $\mu >0$ implies that $-\un{X}_T\to-\un{X}_\infty<\infty$ a.s. as $T\to\infty$. Since $\|Z'_T-Z_T\|
=a_T^{-1}\|(\Delta_T,0,(\ov{X}_T-X_T)/a_T)\|$, and $\ov{X}_T-X_T\eqd-\un{X}_T$, part~(a) will follow if we show that $\Delta_T/a_T\cip 0$ as $T\to\infty$.

By~\eqref{eq:SB-representation}, we have $(\Upsilon_T^\frown -T,X_T-\mu T)\eqd\sum_{n=1}^\infty( \sqrt{t_n^2+\xi_n^2}-t_n,\wt\xi_n)$, where  $\wt\xi_n\coloneqq\xi_n-\mu t_n$. Thus we have  $\Delta_T\eqd\sum_{n\in\N}\zeta_n$, where 
\[
\zeta_n
\coloneqq \sqrt{t_n^2+\xi_n^2}
-\sqrt{1+\mu^2}t_n-\frac{\mu}{\sqrt{1+\mu^2}}\wt\xi_{t_n} =\sqrt{1+\mu^2}t_n 
\bigg( \bigg(1+\frac{\wt\xi_n^2 
+ 2\mu t_n \wt\xi_n}{t_n^2(1+\mu^2)}\bigg)^{1/2}-1
-\frac{\mu}{1+\mu^2}\frac{\wt\xi_n}{t_n}\bigg).
\]
To prove that $\Delta_T/a_T\cip 0$, we again split the summation set with $\GI_T$ and $\GI_T^\co$. Define: 
\[
\Delta_T^{(1)} 
\coloneqq \sum_{n\in\GI_T} \zeta_n,
\qquad
\Delta_T^{(2)} 
\coloneqq \sum_{n\in\GI_T^\co} \zeta_n,
\]
and note that $\Delta_T\eqd\Delta_T^{(1)}+\Delta_T^{(2)}$.

Fix some $p\in(0,\alpha/2)$ and use the inequality $\sqrt{1+z}\le 1+z/2$ for $z\ge -1$ and the subadditivity of $x\mapsto x^p$ to obtain 
\[
\E[|\Delta_T^{(1)}/a_T|^p]
\le\E\bigg[\bigg|\sum_{n\in\GI_T}
    \frac{\wt\xi_n^2}{2a_T\sqrt{1+\mu^2}t_n}\bigg|^p
        \bigg]
\le\E\sum_{n\in\GI_T}
        \frac{|\wt\xi_n|^{2p}}{a_T^p t_n^p}.
\]
Recall that $(X_t-\mu t)/a_t\cid S_\alpha(1)$ as $t\to\infty$. Thus, by Lemma~\ref{lem:stable_mom}, there exists a constant $C_{2p}>0$ such that $\E[|X_t-\mu t|^{2p}]\leq C_{2p} a_t^{2p}$ for all $t\ge 1$. Therefore $\E_\ell[|\wt{\xi}_{n}|^{2p}]\le C_{2p} a_{t_n}^{2p}$ for $n\in\GI_T$. 

Suppose $\alpha\in(1,2)$. Pick $q\in(1/2,1/\alpha)$ and apply Potter's Theorem~\cite[Thm~1.5.6]{BinghamBook} to obtain $a_t/a_T\le C'_q (t/T)^q$ for all $T>t\ge 1$ and some $C'_q>0$. Thus, Corollary~\ref{cor:int_SB_tauT}(a) yields
\begin{align*}
\E[|\Delta_T^{(1)}/a_T|^p]
\le C_{2p}\E\sum_{n\in\GI_T}\frac{a_{t_n}^{2p}}{a_T^pt_n^p}
&=C_{2p}\Big(\frac{a_T}{T}\Big)^p
    \E\sum_{n\in\GI_T}
        \ell_n^{-p}\Big(\frac{a_{t_n}}{a_T}\Big)^{2p}\\
&\le C_{2p}(C'_q)^{2p}\Big(\frac{a_T}{T}\Big)^p
    \E\sum_{n=1}^\infty\ell_n^{p(2q-1)}
=\frac{C_{2p}(C'_q)^{2p}}{p(2q-1)}\Big(\frac{a_T}{T}\Big)^p,
\end{align*}
which tends to $0$ as $T\to\infty$, implying $\Delta_T^{(1)}/a_T\cip 0$. 

Suppose $\alpha=2$. We may assume $a_t=\sqrt{t}l(t)$ for a locally bounded and slowly varying function $l$. Thus, by~\cite[Prop.~1.5.9a]{BinghamBook}, $\tilde l(T)\coloneqq\int_1^T t^{-1}l(t)^{2p}dt$ is also slowly varying and Corollary~\ref{cor:int_SB_tauT}(a) yields
\[
\E[|\Delta_T^{(1)}/a_T|^p]
\le C_{2p}\E\sum_{n\in\GI_T}
    \frac{a_{t_n}^{2p}}{a_T^pt_n^p}
= C_{2p}\E\sum_{n\in\GI_T}
    \frac{l(t_n)^{2p}}{a_T^p}
=C_{2p}\frac{\tilde l(T)}{a_T^p}
\xrightarrow[T\to\infty]{} 0.
\]

It remains to show that $\Delta_T^{(2)}/a_T\cip 0$ 
as $T\to\infty$. The inequality $\sqrt{1+x+y}\geq 1+y/2$ for $x \geq y^2/4$ and $x+y\geq -1$ shows that $\Delta_T^{(2)}\ge 0$ a.s. By the subadditivity of $x\mapsto\sqrt{x}$, we obtain 
\begin{align*}
\frac{1}{a_T}\E[|\Delta_T^{(2)}|]
&\le\frac{\sqrt{1+\mu^2}}{a_T}\E\sum_{n\in \GI_T^\co}t_n 
\bigg| \frac{\wt\xi_n}{t_n\sqrt{1+\mu^2}}
    +\frac{(2|\mu||\wt\xi_n|)^{1/2}}{\sqrt{t_n(1+\mu^2)}}
    -\frac{\mu}{1+\mu^2}\frac{\wt\xi_n}{t_n}\bigg|\\
&\leq \frac{1}{a_T}\E\sum_{n\in \GI_T^\co}\bigg( 
    \bigg( 1-\frac{\mu}{\sqrt{1+\mu^2}}\bigg)|\wt\xi_n|
    +\sqrt{2|\mu|t_n|\wt\xi_n|} \bigg).
\end{align*} 
By~\cite[Eq.~(24)]{LevySupSim} and Jensen's inequality, there exists a constant $C>0$ such that $\E[|X_t-\mu t|]\le C\sqrt{t}$ for all $t\le 1$. Thus,  Corollary~\ref{cor:int_SB_tauT}(a) yields $\Delta_T^{(2)}/a_T\ciL 0$ as $T\to\infty$, completing the proof of part~(a).

(b) Note that $\ov{X}_T\to\ov{X}_\infty <\infty$ a.s. and $\gamma_T^\frown\to\gamma_\infty^\frown<\infty$ a.s. as $T\to\infty$. We next split the length of the concave majorant in two at the time of the supremum, so the total length $\Upsilon_T^\frown$ up to time $T$ is equal to the sum of the length $\Delta_T^{(1)}$ up to time $\gamma_T^\frown$ and the length $\Delta_T^{(2)}$ from $\gamma_T^\frown$ to $T$. It follows that $\Delta_T^{(1)}\to\Upsilon_{\gamma_\infty^\frown}^\frown$ a.s. as $T\to\infty$, implying $\Delta_T^{(1)}/a_T\to 0$ a.s. Thus, it suffices to consider $\Delta_T^{(2)}$ for the weak limit of $\Upsilon_T^\frown$. Since the post-supremum process is independent of the pre-supremum process by~\cite[Thm~2.3]{BertoinSplittingSup}, we conclude, as in part~(a), that 
\[
\bigg(\frac{\Delta_T^{(2)}
    -(T-\gamma_T^\frown)}{a_T},
\frac{(C_T^\frown(T)-\ov{X}_T)
    -\mu(T-\gamma_\infty^\frown)}{a_T}\bigg)\bigg| 
        (\ov{X}_\infty,\gamma_\infty^\frown) 
\cid\bigg(\frac{\mu}{\sqrt{1+\mu^2}},1\bigg)S_\alpha(1), 
\quad\text{ as }T\to\infty.
\]
Note here that the limit law does not depend on 
$(\ov{X}_\infty,\gamma_\infty^\frown)$, so the limit is 
independent of $(\ov{X}_\infty,\gamma_\infty^\frown)$. 
Since we also have $|\ov X_\infty-\ov X_T|\to 0$ 
and $|\gamma_\infty^\frown-\gamma_T^\frown|\to 0$ a.s. 
as $T\to\infty$, the result follows. 
\end{proof}

\subsection{Sandwiching the concave majorant}
\label{subsec:proof_alpha_less_than_1}
When the tails of  $X$ are sufficiently heavy for it not to have the first moment, the asymptotic behaviour of the boundary of its convex hull is straightforward.

\begin{proof}[Proof of Theorem~\ref{thm:Theorem3}]
The supremum, infimum and the times at which they are attained are functionals that are continuous a.s. in  $J_1$-topology with respect to the law of an  $\alpha$-stable process, since the times at which the extrema are attained are a.s. unique (see~\cite[Lem.~14.12]{MR1876169}  and~\cite[Thm~2]{MR2978134}). Thus, 
by the continuous mapping theorem, it suffices to prove
 $|\Upsilon_T^\frown-(2\ov{C}_T^\frown-C_T^\frown(T))|/a_T\to 0$ and $|\Upsilon_T^\smile-(C_T^\smile(T)-2\un{C}_T^\smile)|/a_T\to 0$ a.s. as $T\to\infty$. 
 Recall 
 $X_T=C_T^\frown(T)\le \ov X_T = \ov{C}_T^\frown$
 and
 $\gamma^\frown_T\in[0,T]$.
 Hence, 
by Figure~\ref{fig:FacesCM},
the following inequalities hold:
\begin{align*}
2\ov X_T - X_T
\le
((\gamma^\frown_T)^2 + (\ov{X}_T)^2)^{1/2}+
((T-\gamma^\frown_T)^2 + (\ov{X}_T-X_T)^2)^{1/2}
 \leq \Upsilon^\frown_T 
\leq 
2\ov X_T - X_T +T.
\end{align*} 
Since $\alpha\in(0,1)$ we have $\lim_{T\to\infty} T/a_T= 0$,
implying 
$|\Upsilon_T^\frown-(2\ov{C}_T^\frown-C_T^\frown(T))|/a_T\to 0$
a.s. as $T\to\infty$. The proof of the second limit is analogous.
\end{proof}

\begin{proof}[Proof of 
Proposition~\ref{prop:compare_length}]
(a)\&(b) In part (a), define $a_T\coloneqq\sqrt{T}$ for all $T>0$. Note that $\Upsilon^\sqcap_T-T=2\ov{X}_T-X_T$ and 
\[
\Upsilon^\wedge_T-T
=\Big(\sqrt{(\gamma^\frown_T)^2 
    + \ov{X}_T^2}-\gamma^\frown_T\Big)
+\Big(\sqrt{(T-\gamma^\frown_T)^2 + (\ov{X}_T-X_T)^2}
        -(T-\gamma^\frown_T)\Big).
\]
We will show that 
\begin{equation}
\label{eq:hut-cip}
\frac{T}{a_T^2}\bigg|\Upsilon_T^\wedge-T- \frac{\ov X_T^2}{2\gamma_T^\frown}-\frac{(\ov X_T-X_T)^2}{2(T-\gamma_T^\frown)}\bigg|
\cip 0,\qquad \text{as }T\to\infty.
\end{equation}
The conclusions of parts~(a) \&~(b) will then follow from~\eqref{eq:hut-cip}, an application of the continuous mapping theorem and Theorems~\ref{thm:maintheorem1} \&~\ref{thm:Theorem2}, respectively. 

To prove~\eqref{eq:hut-cip}, by symmetry, it suffices to show that $Ta_T^{-2}|((\gamma^\frown_T)^2 + \ov{X}_T^2)^{1/2}-\gamma^\frown_T - \ov{X}_T^2/(2\gamma^\frown_T)|\cip 0$ as $T\to\infty$. An application of Taylor's theorem yields $\sqrt{1+x^2}=1+x^2/2+x^4\theta(|x|)/8$, where $\theta:[0,\infty)\to[0,1]$ is a bounded function. Thus, the limit in probability is implied by the limit $Ta_T^{-2}\ov{X}_T^4/(\gamma^\frown_T)^3\cip 0$ as $T\to\infty$, which is itself a direct consequence of the fact that $a_T/T\to 0$, the continuous mapping theorem and the weak limits $\gamma^\frown_T/T\cid \gamma^{\alpha\frown}$ and $\ov{X}_T/a_T\cid \ov{S}_\alpha(1)$ as $T \to \infty$. 

(c) The proof follows as in the proof of Theorem~\ref{thm:Theorem3}, using the triangle inequality to obtain 
\[
2\ov{X}_T-X_T
\le\Upsilon^\wedge_T
\le\Upsilon^\sqcap_T
=T+2\ov{X}_T-X_T,
\]
and then using the fact that $T/a_T\to 0$ as $T \to \infty$. 
\end{proof}

\printbibliography

\section*{Acknowledgements}

\thanks{
\noindent AM was supported by EPSRC grant EP/P003818/1 and the Turing Fellowship funded by the Programme on Data-Centric Engineering of Lloyd's Register Foundation;
JGC and AM are supported by The Alan Turing Institute under the EPSRC grant EP/N510129/1; 
JGC is supported by CoNaCyT scholarship 2018-000009-01EXTF-00624 CVU 699336; DB is funded by CDT in Mathematics and Statistics at The University of Warwick.}

\end{document}